\newtheorem{theorem}{Theorem}[section]
\newtheorem{lemma}[theorem]{Lemma}
\newtheorem{proposition}[theorem]{Proposition}
\newtheorem{corollary}[theorem]{Corollary}
\newtheorem{definition}[theorem]{Definition}
\theoremstyle{remark}
\newtheorem*{remark}{Remark}
\newtheorem*{remarks}{Remarks}
\numberwithin{equation}{section}
\numberwithin{figure}{section}
\newcommand{\R}{\mathbb{R}}
\newcommand{\Z}{\mathbb{Z}}
\renewcommand{\L}{\mathscr{L}}
\newcommand{\LL}{\mathbb{L}}
\renewcommand{\S}{\mathbb{S}}
\newcommand{\RI}{\what{\mathbb{R}}}
\newcommand{\RP}{\mathbb{R}\textup{P}}
\newcommand{\cycplanes}{\mathcal{P}^{4,2}_{cyc}}
\renewcommand{\P}{\operatorname{P}}
\newcommand{\pol}{\operatorname{pol}}
\newcommand{\inc}{\operatorname{inc}}
\newcommand{\vspan}{\operatorname{span}}
\renewcommand{\phi}{\varphi}
\renewcommand{\epsilon}{\varepsilon}
\newcommand{\longto}{\longrightarrow}
\newcommand{\wtilde}{\widetilde}
\newcommand{\what}{\widehat}
\renewcommand{\implies}{\Rightarrow}
\newcommand{\e}{\mathbf e}
\title{Curvature line parametrized surfaces and
    orthogonal coordinate systems.\\
		Discretization with Dupin cyclides.\thanks{Partially supported by the DFG
		Research Unit ``Polyhedral Surfaces'' and the DFG Research Center
		\textsc{Matheon}}
    }
\author{
Alexander I. Bobenko\footnote{
TU Berlin.
E-mail: bobenko@math.tu-berlin.de
}\hspace{3pt} and Emanuel Huhnen-Venedey\footnote{
TU Berlin.
E-mail: huhnen@math.tu-berlin.de
}
}
\begin{document}

\maketitle

\begin{abstract}
Cyclidic nets are introduced as discrete analogs of curvature line parametrized
surfaces and orthogonal coordinate systems. A 2-di\-men\-sional cyclidic net is
a piecewise smooth $C^1$-surface built from surface patches of Dupin cyclides,
each patch being bounded by curvature lines of the supporting cyclide. An
explicit description of cyclidic nets is given and their relation to the
established discretizations of curvature line parametrized surfaces as circular,
conical, and principal contact element nets is explained.  We introduce
3-dimensional cyclidic nets as discrete analogs of triply-orthogonal coordinate
systems and investigate them in detail. Our considerations are based on the Lie
geometric description of Dupin cyclides. Explicit formulas are derived and
implemented in a computer program.
\end{abstract}


\section{Introduction}
Discrete differential geometry aims at the development of discrete
equivalents of notions and methods of classical differential
geometry. The present paper deals with \emph{cyclidic nets}, which
appear as discrete analogs of curvature line parametrized surfaces
and orthogonal coordinate systems.

A 2-dimensional cyclidic net in $\R^3$ is a piecewise smooth
$C^1$-surface as shown in Fig.~\ref{fig:cyclidic_from_circular}.
\begin{figure}[htb]
\centering
\parbox{4.5cm}{\includegraphics[scale=.15]{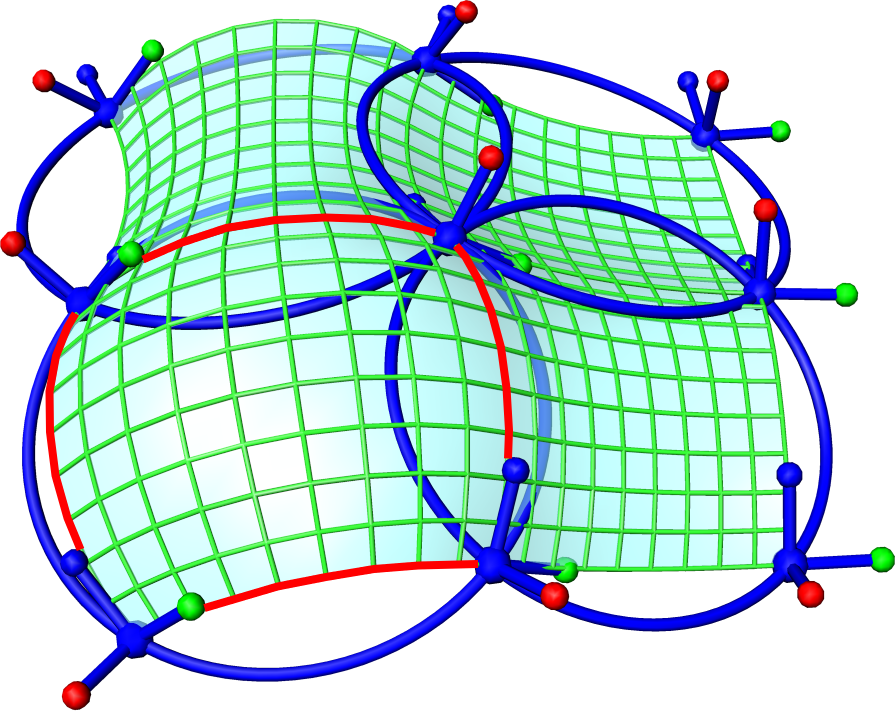}}
\quad \quad $\leftrightarrow$ \quad \quad
\parbox{4.5cm}{\includegraphics[scale=.14]{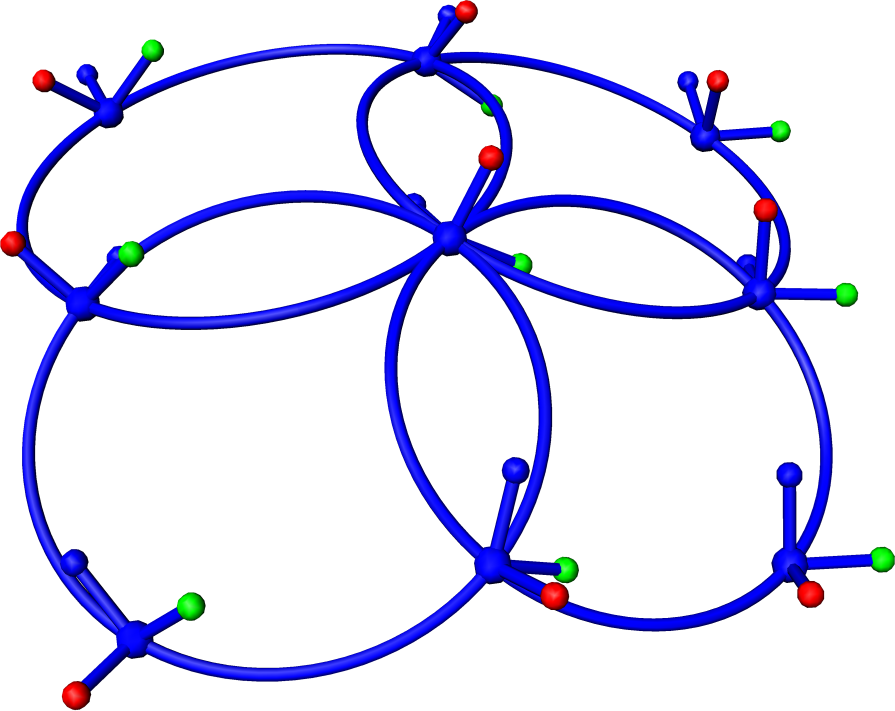}}
\caption{A 2-dimensional cyclidic net is a piecewise smooth
$C^1$-surface composed of cyclidic patches. Equivalently, it can
be seen as a circular net with frames at vertices.}
\label{fig:cyclidic_from_circular}
\end{figure}
Such a net is built from {\em cyclidic patches}. The latter are
surface patches of Dupin cyclides cut out along curvature lines.
Dupin cyclides themselves are surfaces in $\R^3$, which are
characterized by the property that their curvature lines are
circles. So curvature lines of the cyclidic patches in a 2D
cyclidic net constitute a net of $C^1$-curves composed of circular
arcs, which can be seen as curvature lines of the cyclidic net.

Discretization of curvature line parametrized surfaces and
orthogonal coordinate systems is an important topic of current
research in discrete differential geometry. The most recognized
discretizations (see \cite{BobenkoSuris:2008:DDGBook}) include:
\begin{itemize}
\item {\em circular nets}, that are quadrilateral nets with circular
quadrilaterals,
\item {\em conical nets}, that are quadrilateral nets, such that
four adjacent quadrilaterals touch a common cone of
revolution (or a sphere),
\item {\em principal contact element nets}, that are nets built from
planes containing a distinguished vertex (such a pair being called a \emph{contact element}), such that the vertices
constitute a circular net and the planes form a conical net.
\end{itemize}
The most elaborated discretization of curvature line pa\-ra\-metrized surfaces
are circular nets. They appear indirectly and probably for the first time in
\cite{Martin:1983:PrincipalPatches}. The circular discretization
of triply orthogonal coordinate systems was suggested in 1996 and
published in \cite{Bobenko:1999:DiscreteConformalMaps}. The next crucial step in the
development of the theory was made in
\cite{CieslinskiDoliwaSantini:1997:CircularNets}, where circular nets were
considered as a reduction of quadrilateral nets and were
generalized to arbitrary dimension. An analytic description of
circular nets in terms of discrete Darboux systems was given in
\cite{KonopelchenkoSchief:1998:3DIntegrableLattices},
and a Clifford algebra description of circular nets can be found in
\cite{BobenkoHertrich-Jeromin:2001:O-netsClifford}.
The latter was then used to prove the
$C^\infty$-convergence of circular nets to the corresponding smooth curvature line
parametrized surfaces (and orthogonal coordinate systems) in
\cite{BobenkoMatthesSuris:2003:OrthogonalSystems}.
Conical nets were introduced in
\cite{LiuPottmannWallnerYangWang:2006:ConicalNets} as quadrilateral nets
admitting parallel face offsets. This property was interpreted as
a discrete analog of curvature line parametrization. Circular and
conical nets were unified under the notion of principal contact
element nets in \cite{BobenkoSuris:2007:OrganizingPrinciples}
(see also \cite{PottmannWallner:2007:FocalGeometry}).
The latter belong to Lie sphere geometry.

All these previous discretizations are contained in cyclidic nets.  The vertices
of a cyclidic patch are concircular, therefore the vertices of a cyclidic net
build a circular net. On the other hand the tangent planes at these vertices
form a conical net, so vertices and tangent planes together constitute a
principal contact element net.  A cyclidic net contains more information than
its corresponding principal contact element net: The latter can be seen as a
circular net with normals (to the tangent planes) at vertices.  At each vertex
of a cyclidic net one has not only this normal, but a whole orthonormal frame.
The two additional vectors are tangent to the boundaries of the adjacent
patches and the patches are determined by this condition
(cf.  Fig.~\ref{fig:cyclidic_from_circular}).
We explain in detail how a cyclidic patch is determined already by its
vertices and one frame and derive its curvature line parametrization from that
data. It turns out that a cyclidic net is uniquely determined by the
corresponding circular net and the frame at one vertex. It is worth to mention
that the involved frames played an essential role in the convergence proof in
\cite{BobenkoMatthesSuris:2003:OrthogonalSystems}.

Composite $C^1$-surfaces built from cyclidic patches have been considered in
Computer Aided Geometric Design (CAGD), see, e.g.,
\cite{Martin:1983:PrincipalPatches,McLean:1985:CyclideSurfaces,MartinDePontSharrock:1986:CyclideSurfaces,DuttaMartinPratt:1993:CyclideSurfaceModeling,SrinivasKumarDutta:1996:SurfaceDesignUsingCyclidePatches}.
A large part of the book \cite{NutbourneMartin:1988:SurfaceDesign} by Nutbourne
and Martin deals with the Euclidean description of ``principal patches'' and in
particular cyclidic patches. It also contains parametrization formulas for
cyclidic patches, determined by a frame at one of the four concircular vertices.
Unfortunately these formulas are rather complicated and turned out to be not
appropriate for our purposes.  A sequel, focused mainly on surface composition
using cyclidic patches, was planned. However, the second book never appeared,
probably due to the limited possibilities of shape control.  Another application
of Dupin cyclides in CAGD is blending between different surfaces like cones and
cylinders, which are special cases of Dupin cyclides (see for example
\cite{Degen:2002:BlendingWithCyclides}, which also contains a short analysis of
Dupin cyclides in terms of Lie geometry).

In contrast to the previous works on surfaces composed of cyclidic patches, the
present work relies essentially on Lie geometry. We use the elegant classical
description of Dupin cyclides by Klein and Blaschke
\cite{Klein:1926:GeometrieVorlesungen,Blaschke:1929:DG3} to prove statements
about cyclidic patches, and to derive their curvature line parametrization. On
the other hand, cyclidic nets as well as circular nets are objects of M\"obius
geometry, and we argue several times from a M\"obius geometric viewpoint. In
Appendix~\ref{sec:lie_geometry} we give a brief overview of the projective model
of Lie geometry.

Another central part of the present work is a new discretization of
orthogonal coordinate systems as cyclidic nets (in particular in $\R^3$) .
Smooth orthogonal coordinate systems are important examples of
integrable systems \cite{Zakharov:1998:IntegrationLame}.
In the discrete setting analytic methods of the
theory of integrable systems have been applied already to circular nets in
\cite{DoliwaManakovSantini:1998:mDCircular} ($\bar\partial$-method) and in
\cite{AkhmetshinVolvovskiKrichever:1999:DiscreteDarboux-EgorovMetrics} (algebro-geometric
solutions).
Our approach to the discretization is the following:
In the smooth case the classical Dupin theorem states that in a
triply orthogonal coordinate system of $\R^3$ all coordinate surfaces
intersect along common curvature lines.
This translates to a 3D cyclidic net with its
2D layers seen as discrete coordinate surfaces,
for which orthogonal intersection at common vertices is required.
It turns out that this implies orthogonal intersection of
the cyclidic coordinate surfaces along common discrete coordinate lines (cf. Fig.~\ref{fig:octrihedron}).
Moreover it is shown that generically the discrete families of coordinate surfaces
can be extended to piecewise smooth families of 2D cyclidic nets,
which gives orthogonal coordinates for an open subset of $\R^3$.
The corresponding theory is extended to higher dimensions.
\begin{figure}[htb]
\centering
\includegraphics[scale=.33]{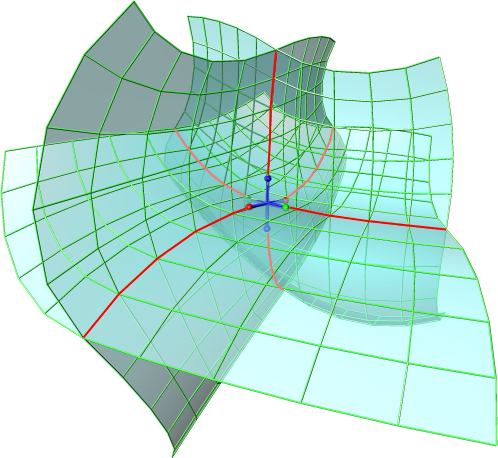}
\caption{2D layers of a 3D cyclidic net intersect orthogonally in
discrete coordinate lines, and in particular at vertices.}
\label{fig:octrihedron}
\end{figure}

Our construction of cyclidic nets, based on the projective model of Lie geometry,
has been implemented in a Java webstart\footnote{available at
{\tt www3.math.tu-berlin/geometrie/ps/software.shtml}}.
All 3D pictures of cyclidic nets have been created with this program.
A brief description of the implementation is given
in Section~\ref{subsec:implementation}.

\enlargethispage{\baselineskip}

\paragraph{Notations.}
We denote the compactified Euclidean space
\begin{equation*}
\RI^N := \R^N \cup \left\{ \infty \right\} \cong \S^N.
\end{equation*}
Homogeneous coordinates of a projective space are also marked with a hat, i.e.
$x=[\what x]=\P(\R \what x)$.
The span of projective subspaces $U_i = \P(\what U_i),i=1,\dots,n$ we write as
\begin{equation*}
\inc[U_1,\dots,U_n] := \P(\vspan(\what U_1,\dots,\what U_n)).
\end{equation*}
The corresponding polar subspace with respect to a quadric is denoted
\begin{equation*}
\pol[U_1,\dots,U_n] := \pol[\inc[U_1,\dots,U_n]] = \P(\vspan(\what U_1,\dots,\what U_n)^\perp),
\end{equation*}
where $\perp$ denotes the orthogonal complement with respect to the inner product defining the quadric.

\emph{Pay attention to the different use of $\langle \cdot,\cdot \rangle$} in our work.
We denote this way the inner product on $\R^{N+1,2}$ of signature $(N+1,2)$ which defines the Lie quadric
in the projective space $\RP^{N+1,2} = \P(\R^{N+1,2})$
(cf. Appendix~\ref{sec:lie_geometry}),
as well as the standard Euclidean scalar product in $\R^N$.
The products can be distinguished by the notation of the involved vectors:
As homogeneous coordinates of $\RP^{N+1,2}$ the vectors in $\R^{N+1,2}$ carry a hat,
while vectors in $\R^N$ don't.
Thus $\langle \what x_1,\what x_2 \rangle$ denotes the product in $\R^{N+1,2}$
and $\langle x_1,x_2 \rangle$ denotes the Euclidean scalar product in $\R^N$.

\section{Dupin cyclides}
\label{sec:dupin_cyclides}

\emph{Dupin cyclides are surfaces
which are characterized by the property that all curvature lines are generalized circles},
i.e. proper circles or straight lines.
They are invariant under Lie transformations,
and in this paper we use the projective model of Lie geometry for their analytic description
\cite{Klein:1926:GeometrieVorlesungen,Blaschke:1929:DG3,Pinkall:1985:DupinHypersurfaces}.

\subsection{Description in the projective model of Lie geometry}
\label{subsec:dupin_cyclides_definition}

In the classical book \cite{Blaschke:1929:DG3} by Blaschke one finds

\begin{definition}[Dupin cyclide]
A \emph{Dupin cyclide} in $\R^3$ is the simultaneous envelope of
two 1-parameter families of spheres, where each sphere out of one
family is touched by each sphere out of the complementary family.
\label{def:dupin_cyclide_euclidean}
\end{definition}

From the Euclidean point of view there are 6 different types of Dupin cyclides.
A Dupin cyclide is called to be of \emph{ring}, \emph{horn}, or \emph{spindle}
type depending on whether it has 0, 1 or 2 singular points.  Moreover one
distinguishes bounded and unbounded cyclides, where the latter are called
\emph{parabolic}.  The standard tori are Dupin cyclides (cf.
Fig.~\ref{fig:standard_tori}) and each other cyclide can be obtained from a
standard torus by inversion in a sphere \cite{Pinkall:1986:DupinscheZykliden}.
In particular, the parabolic cyclides are obtained if the center of the sphere
is a point of the cyclide (a parabolic horn cyclide with its singular point at
infinity is a circular cylinder, a parabolic spindle cyclide with one of its
singular points at infinity is a circular cone).

\begin{figure}[htb]
\centering
\parbox{4.15cm}{\includegraphics[scale=.25]{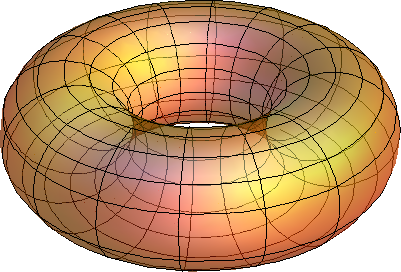}}
\parbox{4cm}{\includegraphics[scale=.24]{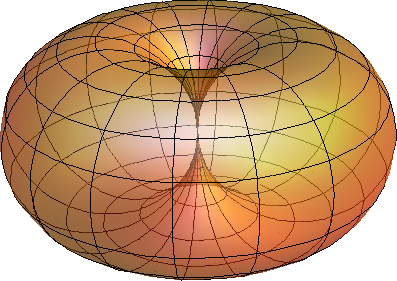}}
\parbox{3cm}{\includegraphics[scale=.21]{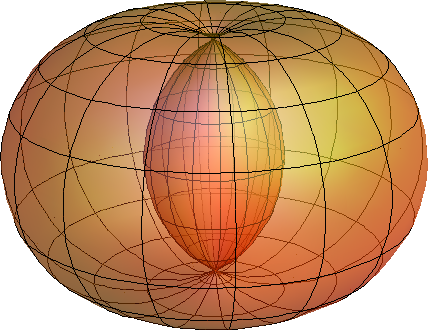}}
\caption{The standard tori represent the different types of Dupin cyclides.}
\label{fig:standard_tori}
\end{figure}

The class of Dupin cyclides is preserved by M\"obius transformations.
In the M\"obius geometric setting only the three types ``ring'', ``spindle'', and ``horn'' remain,
as there is no distinguished point at infinity.
So far, this can be found in the compact survey \cite{Pinkall:1986:DupinscheZykliden} by Pinkall.

In Lie geometry,
which contains M\"obius geometry as a subgeometry,
the distinction between the different types of cyclides,
where the type is determined by the number of singular points,
becomes obsolete:
There are Lie sphere transformations which map points to proper spheres and vice versa,
in particular the number of singular points of a Dupin cyclide is not a Lie invariant.
The following elegant characterization of Dupin cyclides is a reformulation of
the Definition~\ref{def:dupin_cyclide_euclidean} in the projective model of Lie geometry.
It can be found implicitly already in~\cite{Klein:1926:GeometrieVorlesungen}
and in more detail in~\cite{Blaschke:1929:DG3},
while in~\cite{Pinkall:1985:DupinHypersurfaces} the analogous statement is proven for the general case of so-called Dupin hypersurfaces.

\begin{theorem}
A Dupin cyclide in $\RI^3$ corresponds to a polar decomposition of $\RP^{4,2}$
into projective planes $P^{(1)}$ and $P^{(2)}$ of signature $(++-)$.
\label{thm:dupin_cyclides_projective}
\end{theorem}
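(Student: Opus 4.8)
The plan is to translate Definition~\ref{def:dupin_cyclide_euclidean} literally into the projective model of Lie geometry and to read off the two planes from the two enveloping families. Recall from Appendix~\ref{sec:lie_geometry} that the oriented spheres of $\RI^3$ are the points of the Lie quadric $\{\langle \what x,\what x\rangle = 0\}\subset\RP^{4,2}$, and that two spheres are in oriented contact if and only if their representatives are conjugate, $\langle \what s,\what t\rangle = 0$. Thus a $1$-parameter family of spheres is a curve on the Lie quadric, and Definition~\ref{def:dupin_cyclide_euclidean} describes a cyclide by two such curves $\Sigma^{(1)},\Sigma^{(2)}$ with the property that every point of $\Sigma^{(1)}$ is conjugate to every point of $\Sigma^{(2)}$.

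For the forward direction I would set $\what P^{(i)} := \vspan\Sigma^{(i)}$, so that $P^{(i)} = \P(\what P^{(i)})$ are the projective subspaces spanned by the two families. Mutual oriented contact immediately gives $\what P^{(1)} \perp \what P^{(2)}$, i.e.\ $P^{(2)}\subseteq\pol[P^{(1)}]$ and conversely. The substantial point is then to show that each $P^{(i)}$ is exactly a projective \emph{plane} (so $\dim\what P^{(i)}=3$), that its signature is $(++-)$, and that the two planes are actually polar, $P^{(2)}=\pol[P^{(1)}]$, yielding the direct orthogonal decomposition $\R^{4,2}=\what P^{(1)}\oplus\what P^{(2)}$.

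The cleanest way to settle these dimension and signature claims, and the step I expect to carry the real weight, is to reduce to a normal form. All Dupin cyclides are equivalent under Lie sphere transformations --- e.g.\ every cyclide can be carried to a standard torus (or a circular cylinder) --- and Lie transformations act as orthogonal transformations of $(\R^{4,2},\langle\cdot,\cdot\rangle)$, hence preserve span-dimensions, polarity and signature. I would therefore compute $\Sigma^{(1)},\Sigma^{(2)}$ explicitly for one standard example, verify directly that they span two complementary $3$-dimensional subspaces each of signature $(++-)$, and then transfer the conclusion to an arbitrary cyclide by equivariance. Note that the signature $(++-)$ is automatically consistent for both planes at once: a non-degenerate form of signature $(2,1)$ on $\what P^{(1)}$ forces $\what P^{(1)}\cap(\what P^{(1)})^\perp=\{0\}$, so the decomposition is direct, and since $\R^{4,2}$ has signature $(4,2)=(2,1)+(2,1)$ the polar plane necessarily inherits signature $(++-)$ as well.

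For the converse I would run this argument backwards. Given a polar decomposition $\R^{4,2}=\what P^{(1)}\oplus\what P^{(2)}$ into planes of signature $(++-)$, the intersection of each $P^{(i)}$ with the Lie quadric is the projectivized null-cone of a $(2,1)$-form, i.e.\ a nonempty non-degenerate conic, which we read as a genuine $1$-parameter family of oriented spheres. Polarity $\what P^{(2)}=(\what P^{(1)})^\perp$ forces every sphere of the first family to be conjugate --- hence in oriented contact --- with every sphere of the second, so their common envelope is, by Definition~\ref{def:dupin_cyclide_euclidean}, a Dupin cyclide. Finally, by Witt's theorem $O(4,2)$ acts transitively on its $(2,1)$-subspaces, so every such decomposition is Lie-equivalent to the model used in the forward direction; this confirms that each one genuinely arises from a cyclide and closes the correspondence.
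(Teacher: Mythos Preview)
The paper does not actually prove Theorem~\ref{thm:dupin_cyclides_projective}. It attributes the result to Klein~\cite{Klein:1926:GeometrieVorlesungen}, Blaschke~\cite{Blaschke:1929:DG3}, and Pinkall~\cite{Pinkall:1985:DupinHypersurfaces}, and then adds only a short motivational paragraph explaining that polarity in $\RP^{4,2}$ encodes oriented contact, so that two mutually polar conics on the Lie quadric are precisely two families of spheres as in Definition~\ref{def:dupin_cyclide_euclidean}. No argument is given for why the spans are exactly $3$-dimensional or why the signature must be $(++-)$; these points are deferred to the cited literature.

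Your proposal therefore supplies substantially more than the paper does, and your strategy---reduce to a single model via Lie equivalence, compute the two spans explicitly there, and use Witt's theorem for transitivity on $(2,1)$-subspaces---is exactly the standard route found in those references. The converse direction, reading off two non-degenerate conics from a $(++-)\oplus(++-)$ decomposition and invoking the definition, is correct and matches the paper's informal remark. One point to make airtight: you invoke the fact that all Dupin cyclides are Lie-equivalent in the forward direction before you have proved it. The cleanest fix is the one you almost state at the end: first establish the converse together with Witt transitivity, which shows that every $(++-)$ polar decomposition arises and that all of them are Lie-equivalent; then, for the forward direction, verify for \emph{one} explicit cyclide (a torus or cylinder) that its two enveloping families span polar $(++-)$-planes, and observe that the property ``the two sphere families span polar $(++-)$-planes'' is manifestly Lie-invariant. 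The remaining input is that any Dupin cyclide can be brought to the chosen model by a Lie transformation, which is the classical fact the paper alludes to when it notes that the ring/horn/spindle distinction disappears in Lie geometry.
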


Indeed oriented spheres in Lie geometry are described as points on the \emph{Lie quadric}
\begin{equation*}
\mathcal{Q}^{4,2} = P(\LL^{4,2}) \subset \RP^{4,2}
\quad \text{where} \quad
\LL^{4,2} = \left\{ \widehat v \in \R^{4,2} \mid \langle \widehat v,\widehat v \rangle = 0\right\}
\end{equation*}
and spheres $s_1,s_2$ in oriented contact are modelled by polar points,
i.e. $s_i = [\what v_i]$ with $\langle \what v_1,\what v_2 \rangle = 0$
(cf. Appendix~\ref{sec:lie_geometry}).
Thus talking about polar planes $P^{(1)}$ and $P^{(2)}$ in Theorem~\ref{thm:dupin_cyclides_projective}
is another description of two families of spheres as in Definition~\ref{def:dupin_cyclide_euclidean}
This motivates

\begin{definition}[Cyclidic families of spheres]
Denote
\[ \cycplanes = \left\{ \text{Planes in } \RP^{4,2} \text{ with signature }(++-)\right\}.\]
A 1-parameter family of spheres
corresponding to a conic section $P \cap \mathcal{Q}^{4,2}$ with $P \in \cycplanes$
is called a \emph{cyclidic family of spheres}
(cf. Fig.~\ref{fig:cyclidic_planes}).
\label{def:cyclidic_planes}
\end{definition}

Two spheres $s_1,s_2$ in oriented contact span a \emph{contact element},
which consists of all spheres in oriented contact with $s_1$ and $s_2$ and 
is modelled as the projective line $\inc[s_1,s_2]$ contained in the Lie quadric $\mathcal Q^{4,2}$.
Such lines $L \subset \mathcal Q^{4,2}$ are called \emph{isotropic lines} and we define
\begin{equation*}
\L_0^{4,2} = \left\{ \text{Isotropic lines in } \RP^{4,2} \right\} \cong
\left\{ \text{Contact elements of } \what\R^3 \right\}.
\end{equation*}

\begin{figure}[htb]
\centering
 \input{ 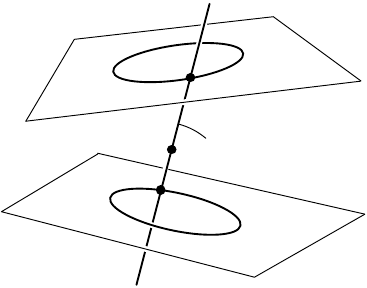_t } 
\caption{Polar planes $P^{(1)},P^{(2)} \in \cycplanes$ describe a Dupin cyclide $\mathcal{C}$:
Each plane intersects the Lie quadric $\mathcal{Q}^{4,2}$ in a non-degenerate conic
$\mathcal{C}^{(i)} = \mathcal Q^{4,2} \cap P^{(i)}$,
and each conic corresponds to one of the 1-parameter families of spheres enveloping $\mathcal{C}$.
Each point $x$ of the cyclide is obtained as contact point of spheres
$s^{(1)} \in \mathcal{C}^{(1)}$
and
$s^{(2)} \in \mathcal{C}^{(2)}$,
the contact elements of $\mathcal{C}$ are given as
$\inc[s^{(1)},s^{(2)}] \subset \mathcal{Q}^{4,2}$.}
\label{fig:cyclidic_planes}
\end{figure}

To be precise, the Lie-geometric characterization of Dupin cyclides as elements of $\cycplanes$
coincides with Definition~\ref{def:dupin_cyclide_euclidean}
if one considers oriented spheres in oriented contact
and reads ``spheres'' as ``generalized oriented spheres''
(i.e. hyperspheres, hyperplanes, and points).
A circle then is also a Dupin cyclide:
One family of spheres consists of points (i.e. spheres of radius 0) constituting the circle,
the other family consists of all oriented spheres containing this circle
(including the two oriented planes).
This coincides with Lemma~\ref{lem:circle_planar_representatives},
which states that points in $\RI^3$ are concircular
if and only if their representatives in the Lie quadric are coplanar.
Note the implication: if three spheres in a cyclidic family are point spheres,
i.e. spheres of vanishing radius,
then the whole cyclide has to be a circle.
This corresponds to the fact that a Dupin cyclide in M\"obius geometry,
which is a 2-surface,
contains at most two singular points.
If one treats circles as Dupin cyclides,
the correspondence described in Theorem~\ref{thm:dupin_cyclides_projective} is indeed a bijection.

To simplify the presentation we exclude the circle case in the following considerations,
which allows us to see Dupin cyclides as surfaces in 3-space with at most two singular points.

\subsection{Curvature line parametrization of Dupin cyclides}
\label{subsec:dupin_cyclides_curvline}

\begin{proposition}
Curvature lines of Dupin cyclides are circles, each such circle being contained
in a unique enveloping sphere.  In particular, the enveloping spheres of a Dupin
cyclide are its principal curvature spheres.

Given a cyclide $\mathcal C$ and its enveloping spheres $\mathcal {C}^{(i)} = P^{(i)} \cap \mathcal{Q}^{4,2},\, i=1,2$,
points along the curvature line on $s^{(i)} \in \mathcal{C}^{(i)}$
are obtained as contact points of $s^{(i)}$ with all spheres of the complementary family $\mathcal{C}^{(j)}$ (cf. Fig.~\ref{fig:cyclidic_planes}).
Curvature lines from different families intersect in exactly one point.
\label{prop:curvature_spheres_touch_along_circles}
\end{proposition}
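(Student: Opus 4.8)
The plan is to work entirely in the projective model and to exploit the polar decomposition $\RP^{4,2} = \inc[P^{(1)},P^{(2)}]$ furnished by Theorem~\ref{thm:dupin_cyclides_projective}, parametrizing the cyclide by its contact elements. Writing the two families as $\mathcal{C}^{(i)} = P^{(i)} \cap \mathcal{Q}^{4,2}$ and choosing parametrizations $s^{(i)}(\theta_i) = [\what{v}_i(\theta_i)] \in \mathcal{C}^{(i)}$, polarity of the planes gives $\langle \what{v}_1(\theta_1),\what{v}_2(\theta_2)\rangle = 0$ for all $\theta_1,\theta_2$, so every pair spans an isotropic line $\inc[s^{(1)}(\theta_1),s^{(2)}(\theta_2)] \subset \mathcal{Q}^{4,2}$, i.e. a contact element. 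First I would record that the point $x(\theta_1,\theta_2)$ of the cyclide is the unique point sphere on this line, obtained by intersecting it with the hyperplane polar to the distinguished vector representing point spheres (cf. Appendix~\ref{sec:lie_geometry}). This realizes the cyclide as the image of $(\theta_1,\theta_2)$ and its contact elements as the coordinate net.

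The central step is to identify the coordinate lines with the curvature lines via the Lie-geometric characterization of curvature spheres: a sphere in a contact element is a principal curvature sphere exactly when it is infinitesimally stationary along some tangent direction, and that direction is the corresponding principal direction. Along a $\theta_2$-curve (with $\theta_1$ fixed) the representative $\what{v}_1(\theta_1)$ is constant, so $s^{(1)}(\theta_1)$ is stationary; hence $s^{(1)}$ is a curvature sphere and $\partial_{\theta_2}$ a principal direction. Symmetrically $s^{(2)}$ is a curvature sphere with principal direction $\partial_{\theta_1}$. Because $\what{P^{(1)}} \cap \what{P^{(2)}} = \{0\}$ (the restriction of $\langle\cdot,\cdot\rangle$ to each signature-$(++-)$ plane is non-degenerate), the spheres $s^{(1)},s^{(2)}$ are always distinct, so each point is non-umbilic and carries exactly two curvature spheres; these must then be $s^{(1)}$ and $s^{(2)}$, and the two coordinate directions are precisely the two principal directions. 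This simultaneously proves that the enveloping spheres are the principal curvature spheres and that the curvature lines are the contact points of a fixed $s^{(i)}$ with the complementary family.

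It then remains to show that each curvature line is a circle lying on its curvature sphere. Since the contact point lies on $s^{(1)}(\theta_1)$, the whole $\theta_2$-curve lies on that enveloping sphere. To see it is a circle I would parametrize the conic trigonometrically, $\what{v}_2(\theta_2) = \cos\theta_2\,\what{a} + \sin\theta_2\,\what{b} + \what{c}$ in an orthogonal basis of $\what{P^{(2)}}$ adapted to the signature, and compute the point-sphere representative on $\inc[s^{(1)},s^{(2)}(\theta_2)]$ by solving the single linear point-sphere condition. The resulting representative has the form $\cos\theta_2\,\what{A} + \sin\theta_2\,\what{B} + \what{C}$ with $\what{A},\what{B},\what{C}$ fixed, hence lies in a three-dimensional linear subspace; by Lemma~\ref{lem:circle_planar_representatives} the points are concircular, so the curvature line is a (generalized) circle. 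Uniqueness of the enveloping sphere containing it follows since a sphere of the complementary family $\mathcal{C}^{(2)}$ meets this curve only in its single contact point, while a distinct sphere of $\mathcal{C}^{(1)}$ is not in oriented contact with $\mathcal{C}$ along it; thus $s^{(1)}(\theta_1)$ is the only enveloping sphere tangent to the cyclide all along the circle.

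Finally, the intersection statement is immediate from the parametrization: a curvature line of the first family is $\{\theta_1 = a\}$ and one of the second family is $\{\theta_2 = b\}$, and these meet exactly at the single contact element $(a,b)$, i.e. in one point of $\mathcal{C}$. The step I expect to be the main obstacle is the identification in the second paragraph — rigorously invoking the Lie-invariant notion of curvature sphere and confirming that the two stationary enveloping spheres exhaust the curvature spheres, so that the coordinate net really is the full curvature line net and not merely a subfamily of principal curves. The circle computation, by contrast, is routine once the trigonometric parametrization of the conic is in place.
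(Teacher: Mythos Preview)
Your proposal is correct, but it takes a noticeably different route from the paper at both of the two key steps.

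\textbf{Identifying the coordinate curves as curvature lines.} You invoke the Lie-invariant characterization of a principal curvature sphere as an infinitesimally stationary sphere in the Legendre lift, and then argue non-umbilicity from $\what{P^{(1)}} \cap \what{P^{(2)}} = \{0\}$ to conclude that the two enveloping spheres exhaust the curvature spheres. The paper bypasses this entirely: it simply quotes the classical Euclidean fact that if a surface is tangent to a sphere along a whole curve, that curve is a curvature line and the sphere is the corresponding principal curvature sphere. This one sentence replaces your whole second paragraph and dissolves exactly the obstacle you flagged, since once each enveloping sphere is known to touch the cyclide along a curve (shown first), the classical statement forces that curve to be a curvature line and the sphere to be principal. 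Your approach buys Lie-invariance of the argument; the paper's buys brevity and avoids any discussion of umbilics or exhaustion of principal directions.

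\textbf{Showing the contact curve is a circle.} You do an explicit trigonometric parametrization of the conic $\mathcal{C}^{(j)}$, compute the point-sphere representative on each contact line, observe it is affine in $(\cos\theta,\sin\theta)$ and hence lies in a projective plane inside $\P(\e_r^\perp)$, and apply Lemma~\ref{lem:circle_planar_representatives}. The paper instead observes directly that the span $\inc[s^{(i)},P^{(j)}]$ is a $3$-space contained in $\pol[s^{(i)}]$ and invokes Proposition~\ref{prop:contact_element_cone_circle}, which packages exactly the statement that such $3$-spaces correspond to circles on $s^{(i)}$. Your computation is routine and correct; the paper's dimension count is the same argument with the linear algebra abstracted away and reused from the appendix.

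Both proofs handle the final intersection claim the same way, via uniqueness of the contact point of $s^{(1)}$ and $s^{(2)}$.
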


\begin{proof}
Let $s^{(i)} \in \mathcal{C}^{(i)}$ be a fixed sphere which is not a point sphere.
By Theorem~\ref{thm:dupin_cyclides_projective},
all contact elements of $\mathcal{C}$ containing $s^{(i)}$ are of the form
$\inc[s^{(i)},s^{(j)}],\, s^{(j)} \in \mathcal{C}^{(j)},\, i \ne j$.
They describe a circle $c^{(i)}$ on $s^{(i)}$ since $\mathcal{C}^{(j)}$ is contained in the plane $P^{(j)}$ 
(in this case $\inc[s^{(i)},P^{(j)}] \subset \pol[s^{(i)}]$ is 3-dimensional and one can use Proposition~\ref{prop:contact_element_cone_circle}).
Such circles are curvature lines,
since if a surface touches a sphere in a curve,
this curve is a curvature line
and the fixed sphere is the corresponding principal curvature sphere.
Two curvature circles $c^{(i)} \subset s^{(i)} \in \mathcal{C}^{(i)},\, i=1,2$,
intersect in the unique contact point of $s^{(1)}$ and $s^{(2)}$.
Since each non-singular point of the cyclide is contained in a unique contact element,
all curvature lines are circles.
\end{proof}

Proposition~\ref{prop:curvature_spheres_touch_along_circles} implies
that parametrizations of polar cyclidic families of spheres
induce a curvature line parametrization of the enveloped Dupin cyclide
(cf. Fig.~\ref{fig:cyclidic_planes}).

\begin{lemma}[Parametrization of conics]
Let $s_1,s_2,s_3$, $s_i = [\what s_i]$, be three points on a non-degenerate conic $\mathcal{C}$ in a projective plane $P$ and let
\[ Q = \begin{pmatrix} 0 & a_{12} & a_{13} \\ a_{12} & 0 & a_{23} \\ a_{13} & a_{23} & 0 \end{pmatrix} \]
be the matrix representation of the corresponding quadratic form $q$ in the
basis $(\what s_1, \what s_2, \what s_3)$, so that
$s_1 \leftrightarrow [1 , 0 , 0 ]^T,\, s_2 \leftrightarrow [0,1,0]^T, s_3
\leftrightarrow [0,0,1]^T$
and $a_{ij}= q(\what s_i , \what s_j)$.
Then $s = [\what s] : \RI \to P$ defined by
\begin{equation}
\what s (t) =
 \left( \begin{array}{l} a_{23} \cdot (2t^2 - 3t + 1) \\ a_{13} \cdot (t - t^2) \\ a_{12} \cdot (2t^2 - t) \end{array} \right)
\label{eq:parametrization_of_conics}
\end{equation}
is a parametrization of $\mathcal{C}$ for which
$s(0) = s_1, \, s(\frac 1 2) = s_2, \, s(1) = s_3$.
\label{lem:parametrization_of_conics}
\end{lemma}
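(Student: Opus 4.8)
The plan is to verify the claimed curve directly: check the three interpolation conditions, show that it lies on the conic identically in $t$, and finally argue that such a map is automatically a bijective parametrization of all of $\mathcal C$. Throughout I would use that non-degeneracy of $\mathcal C$ is equivalent to $\det Q = 2a_{12}a_{13}a_{23}\neq 0$, so that all three off-diagonal entries are nonzero.

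First, it is convenient to abbreviate the three coordinate polynomials as $P_1(t)=(1-t)(1-2t)$, $P_2(t)=t(1-t)$, $P_3(t)=t(2t-1)$, so that $\what s(t)=a_{23}P_1\,\what s_1+a_{13}P_2\,\what s_2+a_{12}P_3\,\what s_3$ in the basis $(\what s_1,\what s_2,\what s_3)$. Each $P_i$ vanishes at exactly the two parameter values among $\{0,\tfrac12,1\}$ assigned to the other two points, so substituting $t=0,\tfrac12,1$ gives at once $\what s(0)=a_{23}\what s_1$, $\what s(\tfrac12)=\tfrac14 a_{13}\what s_2$, $\what s(1)=a_{12}\what s_3$; since the scalars are nonzero, these are the asserted interpolation conditions in $P$.

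Second, for the on-conic condition, note that in the chosen basis the quadratic form reads $q(\what s)=2a_{12}xy+2a_{13}xz+2a_{23}yz$ for $\what s=(x,y,z)$. Substituting $x=a_{23}P_1$, $y=a_{13}P_2$, $z=a_{12}P_3$ and factoring out the common scalar yields $q(\what s(t))=2a_{12}a_{13}a_{23}\bigl(P_1P_2+P_1P_3+P_2P_3\bigr)$, so the whole curve lies on $\mathcal C$ precisely because of the polynomial identity $P_1P_2+P_1P_3+P_2P_3\equiv 0$. This identity is the computational heart of the lemma, but it is a one-line expansion; the clean factorization is exactly what the symmetric choice of coefficients $a_{23},a_{13},a_{12}$ is designed to produce.

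Finally I would argue that $\what s$ parametrizes the full conic and not merely a piece of it. The three polynomials have no common zero (no value in $\RI\cong\RP^1$ is a root of all of $P_1,P_2,P_3$, and the $a_{ij}$ are nonzero), so $\what s(t)\neq 0$ for every $t$ and $s=[\what s]\colon\RI\to P$ is a well-defined map given by quadratic forms, whose image lies on $\mathcal C$; it is non-constant since it already takes three distinct values. I expect this last step --- upgrading ``maps into $\mathcal C$'' to ``parametrizes $\mathcal C$ bijectively'' --- to be the only point that is not a routine calculation. It can be settled either by a degree count (a non-constant map $\RP^1\to\mathcal C$ pulling a line back to a degree-two divisor on $\RP^1$ must have degree one onto the conic, hence be injective and therefore bijective), or, more elementarily, by comparison with the standard parametrization of $\mathcal C$ by the pencil of lines through $s_1$: any two rational parametrizations of a non-degenerate conic differ by an element of $\PGL(2,\R)$ acting on the parameter, and the three interpolation values fix that element uniquely, so the explicit formula must coincide with a genuine parametrization.
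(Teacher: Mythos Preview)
Your proposal is correct and matches the paper's approach: the paper also proceeds by direct computation, stating that one checks $\what s^T Q\,\what s=0$ for all $t$ together with the three normalization conditions, and for the fact that this yields a genuine parametrization of the whole conic simply refers to the standard projection of $\mathcal C$ from one of its points onto a line (citing a textbook). Your explicit factorization via $P_1P_2+P_1P_3+P_2P_3\equiv 0$ and your two alternatives for the bijectivity step (degree count, or comparison with the line-pencil parametrization through $s_1$) spell out precisely what the paper leaves implicit.
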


This is a standard result obtained by projecting a conic $\mathcal C$ from a point $p \in \mathcal C$ to a line,
see \cite{BrannanEsplenGray:2002:Geometry} for example.
A direct computation shows
$\what s^T Q \what s = 0$ for all $t \in \RI$,
as well as the proposed normalization.

Applied to cyclidic families of spheres this yields the following Proposition
(according to later application it is convenient to use upper indices $i$):

\begin{proposition}[Parametrization of a cyclidic family of spheres]
Let $\mathcal{C}^{(i)} = P^{(i)} \cap \mathcal{Q}^{4,2}, P^{(i)} \in \cycplanes$,
be a cyclidic family of spheres which contains spheres $s^{(i)}_1,
s^{(i)}_2, s^{(i)}_3$, $s^{(i)}_j = [\what s^{(i)}_j]$.
A parametrization $s^{(i)} = [\what s^{(i)}]: \RI \to \mathcal{C}^{(i)}$
which maps $0 \mapsto s^{(i)}_1, \, \frac 1 2 \mapsto s^{(i)}_2, \, 1 \mapsto s^{(i)}_3$,
is given by quadratic polynomials
\begin{equation}
\what s^{(i)} (t) =
(\what s^{(i)}_1,\what s^{(i)}_2,\what s^{(i)}_3)
\cdot
 \left( \begin{array}{l} a^{(i)}_{23} \cdot (2t^2 - 3t + 1) \\ a^{(i)}_{13}
 \cdot (t - t^2) \\ a^{(i)}_{12} \cdot (2t^2 - t) \end{array} \right),
 \label{eq:parametrization_of_cyclidic_family}
\end{equation}
with
$a^{(i)}_{jk} := \langle \what s^{(i)}_j ,\what  s^{(i)}_k \rangle$.
\label{prop:parametrization_of_cyclidic_family}
\end{proposition}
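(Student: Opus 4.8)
The plan is to obtain this Proposition as a direct corollary of Lemma~\ref{lem:parametrization_of_conics}, applied to the conic $\mathcal{C}^{(i)} = P^{(i)} \cap \mathcal{Q}^{4,2}$ inside the projective plane $P^{(i)}$. The essential observation is that the hypotheses of the lemma are met verbatim once the three spheres $s^{(i)}_1,s^{(i)}_2,s^{(i)}_3$ are taken as the reference points on the conic, so that the parametrization \eqref{eq:parametrization_of_cyclidic_family} is nothing but the conclusion \eqref{eq:parametrization_of_conics} re-expressed in the ambient space $\R^{4,2}$.

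First I would check that the conic is non-degenerate. Since $P^{(i)} \in \cycplanes$ has signature $(++-)$, the restriction of the Lie inner product $\langle \cdot,\cdot \rangle$ to the three-dimensional linear subspace $\what{P}^{(i)} \subset \R^{4,2}$ is non-degenerate, whence $\mathcal{C}^{(i)}$ is a non-degenerate conic as required by the lemma. I would also note that the three reference spheres are linearly independent: a projective line meets a non-degenerate conic in at most two points, so three distinct points of $\mathcal{C}^{(i)}$ are never collinear, and therefore $(\what s^{(i)}_1,\what s^{(i)}_2,\what s^{(i)}_3)$ forms a basis of $\what{P}^{(i)}$. This legitimizes the use of these vectors as a coordinate frame, exactly as in the lemma where $s_1=[1,0,0]^T$, $s_2=[0,1,0]^T$, $s_3=[0,0,1]^T$.

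Next I would identify the matrix of the quadratic form. Writing $q$ for the restriction of the Lie inner product to $P^{(i)}$, its matrix in the basis $(\what s^{(i)}_1,\what s^{(i)}_2,\what s^{(i)}_3)$ has entries $q(\what s^{(i)}_j,\what s^{(i)}_k) = \langle \what s^{(i)}_j,\what s^{(i)}_k \rangle$. Because every $s^{(i)}_j$ lies on the Lie quadric, the diagonal entries vanish, $\langle \what s^{(i)}_j,\what s^{(i)}_j \rangle = 0$, while the off-diagonal entries are precisely $a^{(i)}_{jk} = \langle \what s^{(i)}_j,\what s^{(i)}_k \rangle$. This is exactly the matrix $Q$ appearing in Lemma~\ref{lem:parametrization_of_conics}, so the lemma applies with no modification.

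It then only remains to transport the lemma's output back into $\R^{4,2}$. Lemma~\ref{lem:parametrization_of_conics} furnishes the parametrization as a column of coefficients relative to the basis $(\what s^{(i)}_1,\what s^{(i)}_2,\what s^{(i)}_3)$; expressing these coefficients as an actual vector of $\R^{4,2}$ amounts to left-multiplying that column by the matrix $(\what s^{(i)}_1,\what s^{(i)}_2,\what s^{(i)}_3)$ whose columns are the basis vectors, which is precisely the form of \eqref{eq:parametrization_of_cyclidic_family}. The normalization $0 \mapsto s^{(i)}_1,\ \tfrac{1}{2} \mapsto s^{(i)}_2,\ 1 \mapsto s^{(i)}_3$ and the identity $\what s^T Q \what s = 0$ are inherited directly from the lemma and require no separate verification; the choice of normalized representatives \eqref{eq:representative_sphere} only fixes the scalars $a^{(i)}_{jk}$ and is irrelevant to the projective validity of the formula. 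I do not expect a genuine obstacle here: the argument is essentially routine, and the only points needing care are the non-degeneracy of $\mathcal{C}^{(i)}$ and the linear independence of the reference spheres, both of which follow immediately from the signature $(++-)$ of $P^{(i)}$.
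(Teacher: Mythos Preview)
Your proposal is correct and follows essentially the same approach as the paper: the paper's proof simply states that \eqref{eq:parametrization_of_cyclidic_family} follows immediately from \eqref{eq:parametrization_of_conics} since $s^{(i)}_1,s^{(i)}_2,s^{(i)}_3$ span $P^{(i)}$, noting that the signature $(++-)$ ensures the conic is non-degenerate and hence no three points on it are collinear. Your write-up spells out the same reasoning in more detail, but the argument is identical.
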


\begin{proof}
The parametrization \eqref{eq:parametrization_of_cyclidic_family} follows immediately from \eqref{eq:parametrization_of_conics}
since the plane $P^{(i)}$ is spanned by the points $s^{(i)}_1,s^{(i)}_2,s^{(i)}_3 \in \mathcal{C}^{(i)}$.
(There are no three collinear points on $\mathcal{C}^{(i)}$ due to the signature $(++-)$ of $P^{(i)}$,
which means that $\mathcal C^{(i)}$ is a non-degenerate conic).
\end{proof}

\begin{theorem}[Curvature line parametrization of Dupin cyclides]\hfill
Let $s^{(i)} = [\what s^{(i)}] : \RI \to \mathcal{C}^{(i)}, i=1,2$,
be parametrizations \eqref{eq:parametrization_of_cyclidic_family}
of the cyclidic families of spheres enveloping a Dupin cyclide $\mathcal{C} \subset \RI^3$.
A (Lie geometric) curvature line parametrization of $\mathcal{C}$ in terms of contact elements is given by
\begin{equation}
L : \RI \times \RI \to \L_0^{4,2},\quad
(t_1,t_2) \mapsto \emph{inc}[s^{(1)}(t_1),s^{(2)}(t_2)].
\label{eq:parametrization_of_cyclides_lie}
\end{equation}
The corresponding Euclidean curvature line parametrization is given by the
$\e_1$-, $\e_2$-, and $\e_3$-components of the induced parametrization
\begin{equation}
\label{eq:parametrization_of_cyclides}
\what x : \RI \times \RI \to \LL^{4,2},
\quad
(t_1,t_2) \mapsto \frac{\xi(t_1,t_2)}{-2 \langle \xi(t_1,t_2) , \e_\infty \rangle},
\end{equation}
where
\begin{equation*}
\xi(t_1,t_2) =
\langle \what s^{(2)}(t_2), \e_r \rangle \ \what s^{(1)}(t_1) 
- \langle \what s^{(1)}(t_1), \e_r \rangle \ \what s^{(2)}(t_2).
\end{equation*}
For $\langle \xi(t_1,t_2) , \e_\infty \rangle \ne 0$, the right hand side of
\eqref{eq:parametrization_of_cyclides} are normalized homogeneous coordinates
\eqref{eq:representative_point} of the unique finite contact point of
\eqref{eq:parametrization_of_cyclides_lie}, which is the corresponding point of
the cyclide. The value $\langle \xi(t_1,t_2) , \e_\infty \rangle = 0$
corresponds to $x = \infty$.
\label{thm:parametrization_of_cyclides}
\end{theorem}

\begin{remark}
Note that $\what x$ can be constant along parameter lines (corresponding to
singular points of a cyclide), but $L$ cannot.
\end{remark}

\begin{proof}
Definition~\ref{def:dupin_cyclide_euclidean} translates to
\eqref{eq:parametrization_of_cyclides_lie}, and because of
Proposition~\ref{prop:curvature_spheres_touch_along_circles} this is a curvature
line parametrization of $\mathcal{C}$ in Lie geometry.  The parametrization
\eqref{eq:parametrization_of_cyclides} is obtained along the lines of
Lemma~\ref{lem:hom_coords_of_contact_points}, where division 
by the $\e_0$-component of $\xi$ yields the desired normalization.
\end{proof}

\subsection{Cyclidic patches}
\label{subsec:cyclidic_patches}

\begin{definition}[Cyclidic patch]
A \emph{cyclidic patch} is an oriented surface patch,
obtained by restricting a curvature line parametrization
of a Dupin cyclide
to a closed rectangle $I_1 \times I_2$.
We call a patch \emph{non-singular} if it does not contain singular points.
\label{def:cyclidic_patch}
\end{definition}

Geometrically, a cyclidic patch is a piece cut out of a Dupin cyclide along curvature lines,
i.e. along circular arcs, as in Fig.~\ref{fig:patch_cut}.
\begin{figure}[htb]
\centering
    \includegraphics[scale=.18]{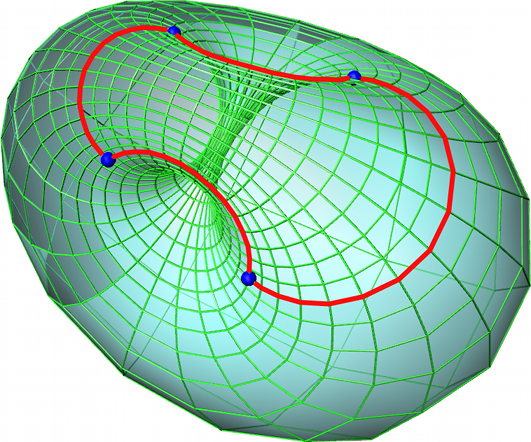}\hspace{2cm}
    \includegraphics[scale=.18]{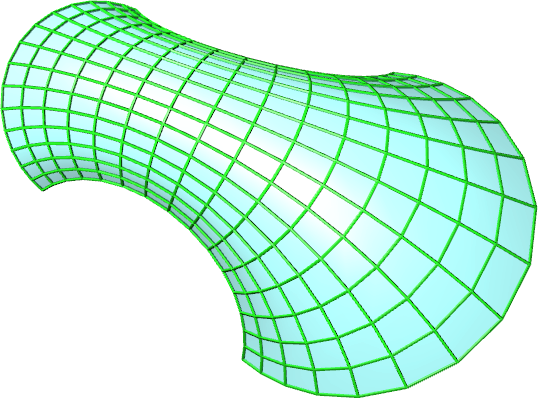}
\caption{A cyclidic patch.}
\label{fig:patch_cut}
\end{figure}
If the vertices of a patch are non-singular points,
the boundary consists of four circular arcs which intersect orthogonally at vertices.
Moreover, a patch is non-singular if and only if its boundary curves intersect only at vertices.

Now we introduce a notation, which will be convenient for the description of
cyclidic nets in Section~\ref{sec:cyclidic_nets}: If $f:[0,1] \times [0,1] \to
\R^3$ is a (parametrized) cyclidic patch, we denote its vertices by
\begin{equation*}
x = f(0,0),\
x_1 = f(1,0),\
x_{12} = f(1,1),\
x_2 = f(0,1).
\end{equation*}
The notation $(x,x_1,x_{12},x_2)$ refers to the vertices in this order.  Further
we write $\delta_i x = x_i-x$ and $\delta_i x_j = x_{ij}-x_j$ respectively.
Finally, we denote by $\widehat{x,x_i}$ the boundary arc of a cyclidic patch
connecting the vertices $x$ and $x_i$.

\begin{definition}[Vertex frames and boundary spheres of a cyclidic patch]
Let $f$ be a cyclidic patch with vertices $(x,x_1,x_{12},x_2)$.
The \emph{vertex frame} of $f$ at $x$
is the orthonormal 3-frame $B = (t^{(1)},t^{(2)},n)$,
where
$t^{(i)}$ is the tangent vector of $\widehat{x,x_i}$ directed from $x$ to $x_i$
and $n$ is the normal of the supporting cyclide.
The \emph{boundary spheres} of $f$ are the principal curvature spheres supporting the boundary arcs.
\end{definition}

The vertex frames always adapt to the boundary arcs of a patch.
For example the frame $B_1 = (t_1^{(1)},t_1^{(2)},n_1)$ at $x_1$ contains the unit vectors $t_1^{(1)}$ and $t_1^{(2)}$,
where $t_1^{(1)}$ is the tangent vector of $\widehat{x,x_1}$ directed from $x_1$ to $x$,
and $t_1^{(2)}$ is the tangent vector of $\widehat{x_1,x_{12}}$ directed from $x_1$ to $x_{12}$
(cf. Fig.~\ref{fig:patch_geometry}).

The perpendicular bisecting hyperplane of the segment $[x,x_i]$ we denote
\begin{equation}
    h^{(i)} = \left\{ p \in \R^N \mid
        \langle p,\delta_i x \rangle
    = \langle x + \tfrac12 \delta_i x,\delta_i x \rangle
    = \tfrac12 (\|x_i\|^2 - \|x\|^2)
    \right\}.
\label{eq:bisecting_hyperplane}
\end{equation}
By $H^{(i)}$ we denote the reflection in $h^{(i)}$ which interchanges $x$ and $x_i$.
A vector $v$ at $x$ is accordingly mapped to $v_i$ at $x_i$ due to
\begin{equation}
v_i = H^{(i)} (v) = v - 2 \, \frac{\langle \delta_i x,v \rangle}{\langle \delta_i x,\delta_i x \rangle} \, \delta_i x.
\label{eq:bisecting_hyperplane_map}
\end{equation}
The corresponding reflections in the hyperplanes $h_i^{(j)}$ are denoted by $H_i^{(j)}$.
(cf. Fig.~\ref{fig:reflection_lemma}).
These mappings extend naturally to tuples of vectors and we denote the image of a frame $B$ by $H^{(i)}(B)$.

\begin{lemma}
Let $x,x_1,x_{12},x_2$ be four distinct points on a circle.
Then the reflections \eqref{eq:bisecting_hyperplane_map} in hyperplanes \eqref{eq:bisecting_hyperplane} satisfy
\begin{equation}
H_1^{(2)} \circ H^{(1)} = H_2^{(1)} \circ H^{(2)}.
\label{eq:reflection_lemma_2-2}
\end{equation}
\label{lem:reflection_lemma}
\end{lemma}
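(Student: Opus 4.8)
The plan is to recognize each map in \eqref{eq:reflection_lemma_2-2} as a linear reflection and to exploit that the four points, lying on one circle, span only a $2$-dimensional plane. By \eqref{eq:bisecting_hyperplane_map} the reflection $H^{(1)}$ has normal $\delta_1 x = x_1-x$, the reflection $H^{(2)}$ has normal $\delta_2 x = x_2-x$, while $H_1^{(2)}$ and $H_2^{(1)}$ have normals $\delta_2 x_1 = x_{12}-x_1$ and $\delta_1 x_2 = x_{12}-x_2$, respectively (recall $h_1^{(2)}$ bisects $[x_1,x_{12}]$ and $h_2^{(1)}$ bisects $[x_2,x_{12}]$). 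All four normals are chord vectors of the circle, hence lie in the $2$-dimensional linear plane $\Pi$ parallel to the plane of the circle. As affine isometries the two sides of \eqref{eq:reflection_lemma_2-2} send $x\mapsto x_1\mapsto x_{12}$ and $x\mapsto x_2\mapsto x_{12}$ respectively, so they already agree at $x$; it therefore suffices to prove equality of the linear parts, which are exactly the vector maps \eqref{eq:bisecting_hyperplane_map}.

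For the linear parts I would first cut down the dimension. For any $v\in\Pi$ the reflection \eqref{eq:bisecting_hyperplane_map} acts as the identity on the orthogonal complement $\Pi^\perp$ (there $\langle\delta_i x,\cdot\rangle=0$) and restricts to an ordinary line reflection inside $\Pi$. Consequently each side of \eqref{eq:reflection_lemma_2-2} fixes $\Pi^\perp$ pointwise and restricts on $\Pi$ to a product of two line reflections, i.e.\ a planar rotation. The whole identity thus collapses to the claim that these two rotations of $\Pi$ coincide, for which it is enough to compare their oriented rotation angles.

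The remaining step is this angle computation, which I would carry out by the inscribed angle theorem in directed form. Parametrize the circle so that $x,x_1,x_{12},x_2$ sit at angular positions $\alpha,\beta,\gamma,\delta$; then the chord joining positions $\theta,\theta'$ has direction $\tfrac12(\theta+\theta')$ modulo $\pi$ (up to a common additive constant $\tfrac\pi2$ that drops out of all angle differences). A product of two line reflections with normals $a$ then $b$ is the rotation by $2\,\angle(a,b)$, the directed angle of the normals. For $H_1^{(2)}\circ H^{(1)}$ the normals are the chords $\delta_1 x$ and $\delta_2 x_1$, giving rotation angle $2\bigl(\tfrac{\beta+\gamma}{2}-\tfrac{\alpha+\beta}{2}\bigr)=\gamma-\alpha$; for $H_2^{(1)}\circ H^{(2)}$ the normals are $\delta_2 x$ and $\delta_1 x_2$, giving $2\bigl(\tfrac{\delta+\gamma}{2}-\tfrac{\alpha+\delta}{2}\bigr)=\gamma-\alpha$. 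Both are rotation by $\gamma-\alpha$ in the oriented plane $\Pi$, which together with the common action on $\Pi^\perp$ yields \eqref{eq:reflection_lemma_2-2}.

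I expect the orientation bookkeeping in this last paragraph to be the only real obstacle: the inscribed angle theorem immediately equates the magnitudes of the two rotation angles, but one must track signs to see that the rotations agree as oriented maps rather than merely up to a reflection. Working with directed angles modulo $\pi$ (so that the constant $\tfrac\pi2$ in the chord direction and the choice of orientation of $\Pi$ both cancel) settles this cleanly, and the distinctness of the four points guarantees that each chord direction, hence each mirror line, is well defined, so no degenerate subcase arises.
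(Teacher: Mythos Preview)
Your argument is correct. The paper's own proof is a one-liner that takes a slightly different, more geometric route: it observes that each perpendicular bisecting hyperplane of a chord of the circle contains the \emph{axis} of the circle (the affine orthogonal complement of the circle plane through its center). Hence all four hyperplanes share this common axis, and each composition in \eqref{eq:reflection_lemma_2-2} is a product of two reflections in hyperplanes through one fixed affine $(N-2)$-flat, i.e.\ a rotation about that axis. Since both rotations send the off-axis point $x$ to $x_{12}$, they coincide---no angle computation is needed. Your approach instead splits off the linear part, reduces to a $2$-dimensional rotation, and then matches the rotation angles explicitly via the chord-direction formula (essentially the inscribed angle theorem). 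Both arguments hinge on the same underlying fact that the normals all lie in the circle plane; the paper leverages the shared affine axis to finish in one stroke, while your computation makes the equality of angles fully explicit and handles the orientation bookkeeping you flagged. Either is fine; the paper's version is shorter, yours is more self-contained.
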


\begin{proof}
Since the considered points are concircular they lie in a 2-plane $\Pi$.
All hyperplanes intersect in the affine orthogonal complement of $\Pi$ through the center of the circle
(cf. Fig.~\ref{fig:reflection_lemma}).
\end{proof}

\begin{figure}[htb]
\centering
 \input{ 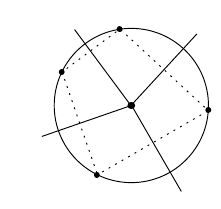_t } 
\caption{Perpendicular bisecting hyperplanes between concircular points intersect
in the axis of the circle.}
\label{fig:reflection_lemma}
\end{figure}

\begin{proposition}[Geometry of cyclidic patches]
For a cyclidic patch with vertices $(x,x_1,x_{12},x_2)$ one has:
\begin{enumerate}[i)]
\item The vertices of the patch lie on a circle.
\item Vertex frames at neighboring vertices are related
by reflection in the bisecting plane \eqref{eq:bisecting_hyperplane},
possibly composed with the direction reversion of the non-corresponding tangent vector.
The change of orientation depends on whether the vertex quadrilateral
$(x,x_1,x_{12},x_2)$ is embedded or not.
For example frames $B$ and $B_i$ at vertices $x$ and $x_i$ are related as follows:
\begin{itemize}
\item If the points $x$ and $x_i$ on the circle are not separated by $x_j$ and $x_{12}$, then
\begin{equation}
B_i = H^{(i)} (B).
\label{eq:vertex_frames_reflection}
\end{equation}
\item If $x$ and $x_i$ are separated by $x_j$ and $x_{12}$, then
\begin{equation}
B_i = (H^{(i)} \circ F^{(j)}) (B) = (F^{(j)} \circ H^{(i)}) (B),\quad i \ne j
\label{eq:vertex_frames_reflection_nonembedded}
\end{equation}
where $F^{(j)}$ denotes the orientation change of $t^{(j)}$ in $(t^{(1)},t^{(2)},n)$.
\end{itemize}
In particular, lines $x+\R n$ and $x_i+\R n_i$
intersect in the center of the boundary sphere containing the boundary arc $\widehat{x,x_i}$
 (cf. Fig.~\ref{fig:patch_geometry}).
\end{enumerate}
\label{prop:geometric_properties_of_cyclidic_patches}
\end{proposition}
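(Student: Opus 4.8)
The plan is to treat the three assertions in turn, using the Lie-geometric parametrization \eqref{eq:parametrization_of_cyclides} together with elementary reflection geometry in $\R^N$.

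For (i) I would work in the Lie quadric. The four vertices are the contact points of the four contact elements $\inc[s^{(1)}(t_1),s^{(2)}(t_2)]$ with $(t_1,t_2)\in\{0,1\}^2$, so by \eqref{eq:parametrization_of_cyclides} their point-sphere representatives in $\LL^{4,2}$ are, up to scale, the numerators $r^{(2)}(t_2)\,\what s^{(1)}(t_1)-r^{(1)}(t_1)\,\what s^{(2)}(t_2)$. Each of these four vectors is a combination of the \emph{same} four vectors $\what s^{(1)}(0),\what s^{(1)}(1),\what s^{(2)}(0),\what s^{(2)}(1)$, and crucially each uses only one vector from each family. Writing the four representatives in this basis, the resulting $4\times4$ coefficient matrix is singular (a direct expansion, or exhibiting the kernel vector whose entries are the products of the four radii $r^{(i)}(0),r^{(i)}(1)$, shows its determinant vanishes). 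Hence the four representatives are coplanar, and Lemma~\ref{lem:circle_planar_representatives} gives that the vertices are concircular. I expect this step to be routine.

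For the ``in particular'' statement and the bulk of (ii) I would argue purely Euclidean. By Proposition~\ref{prop:curvature_spheres_touch_along_circles} the arc $\widehat{x,x_i}$ is a curvature circle lying on a principal curvature sphere $s$, so at every point of the arc the cyclide normal is a radius of $s$; in particular $n$ at $x$ and $n_i$ at $x_i$ both point along radii of $s$, so the lines $x+\R n$ and $x_i+\R n_i$ meet in the center $O$ of $s$. Since $O$ is equidistant from $x$ and $x_i$ it lies on the bisecting hyperplane $h^{(i)}$ of \eqref{eq:bisecting_hyperplane}, so the reflection $H^{(i)}$ fixes $O$ and maps $s$ to itself; moreover $h^{(i)}$ contains both the axis of the circle $\widehat{x,x_i}$ and the diameter perpendicular to the chord $[x,x_i]$, hence is a plane of symmetry of that circle. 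Therefore $H^{(i)}$ maps the boundary circle to itself interchanging $x$ and $x_i$, which forces $H^{(i)}(t^{(i)})=t_i^{(i)}$ (the reflection swaps the endpoints and reverses the along-arc direction) and $H^{(i)}(n)=n_i$ (radial direction, orientation preserved since $H^{(i)}$ preserves the two sides of $s$ and the cyclide normal is continuous along the arc). As $B$ and $B_i$ are orthonormal and $H^{(i)}$ is orthogonal, it then follows automatically that $H^{(i)}(t^{(j)})=\pm t_i^{(j)}$, $i\ne j$, so the frames are related by $H^{(i)}$ up to the single sign $F^{(j)}$.

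It remains to pin down that sign, and this is the step I expect to be the main obstacle. Comparing determinants, the relations $H^{(i)}(t^{(i)})=t_i^{(i)}$, $H^{(i)}(n)=n_i$ together with $\det H^{(i)}=-1$ show that the sign is $+1$ exactly when the frames $B$ and $B_i$ have opposite handedness. The handedness of a vertex frame is governed by which neighboring vertices the two boundary tangents point towards, i.e. by the cyclic order of $x,x_1,x_{12},x_2$ on the common circle from (i); the two possible orders correspond precisely to the embedded and the non-embedded (separated) case. I would fix the sign in one normal-form configuration (e.g. a planar circular quadrilateral, or a standard ring torus, where the frames can be written down explicitly) and then invoke continuity: the sign is a locally constant function on the connected families of embedded resp. separated vertex data, and can only change where $x$ and $x_i$ cross the chord $[x_j,x_{12}]$, which is exactly the separation locus. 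This yields \eqref{eq:vertex_frames_reflection} in the non-separated case and \eqref{eq:vertex_frames_reflection_nonembedded} in the separated case. Finally, Lemma~\ref{lem:reflection_lemma} guarantees that the two reflection paths $x\to x_1\to x_{12}$ and $x\to x_2\to x_{12}$ produce the same frame at $x_{12}$, so the sign assignments are globally consistent around the patch.
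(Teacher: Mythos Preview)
Your proposal is correct, and for part~(ii) it follows essentially the paper's line: establish $H^{(i)}(n)=n_i$ from the symmetry of the boundary sphere, $H^{(i)}(t^{(i)})=t_i^{(i)}$ from the symmetry of the arc, and then the remaining sign on $t^{(j)}$ is forced up to~$\pm1$. The paper resolves that sign by direct case inspection (enumerating patches with $0$, $1$, or $2$ singular points via figures and reading off the frame orientations), whereas you do it by a handedness/determinant argument anchored in one normal form and propagated by continuity. Your route is arguably more systematic and avoids appeal to pictures, at the cost of needing one explicit check; the paper's is shorter once one accepts the figures. Your closing consistency check via Lemma~\ref{lem:reflection_lemma} is not part of the paper's proof here (it enters later, in Theorem~\ref{thm:patches_for_concircular_points}), but it is harmless.

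For part~(i) the paper takes a noticeably slicker route than your determinant computation. Rather than writing out the four point-sphere representatives in the basis $\what s^{(1)}(0),\what s^{(1)}(1),\what s^{(2)}(0),\what s^{(2)}(1)$ and checking that the $4\times4$ coefficient matrix is singular, the paper simply observes that the four vertex contact elements $L,L_1,L_{12},L_2$ all lie in $V=\inc[s^{(1)},s_1^{(2)},s_2^{(1)},s^{(2)}]$, which is at most $3$-dimensional; since $V$ contains proper spheres it is not contained in $\P(\e_r^\perp)$, so $V\cap\P(\e_r^\perp)$ has dimension at most~$2$, and Lemma~\ref{lem:circle_planar_representatives} finishes. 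This dimension count replaces your linear-algebra step entirely and is worth knowing.
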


\begin{figure}[htb]
\centering
    \includegraphics[scale=.1]{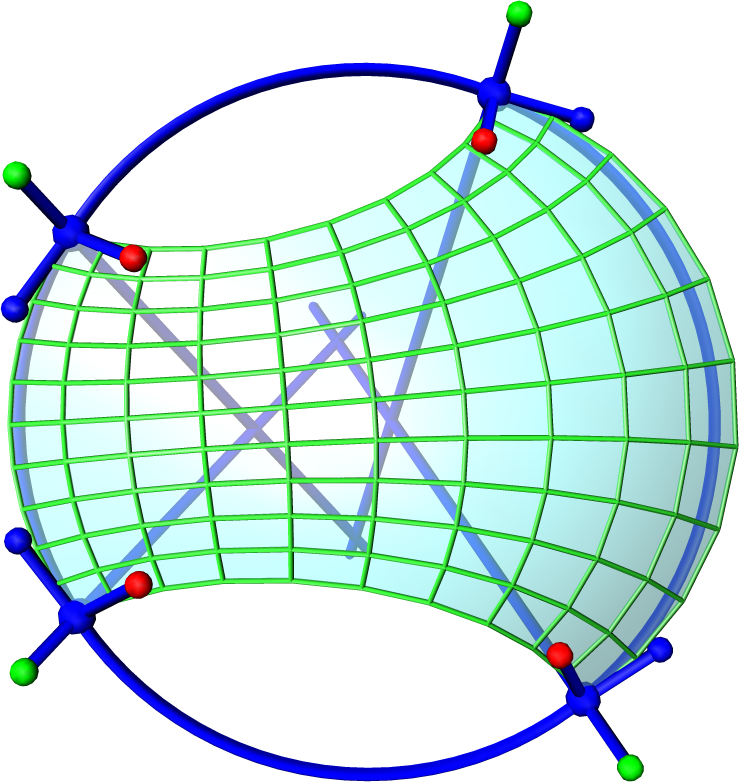}\quad\quad\quad
    \includegraphics[scale=.11]{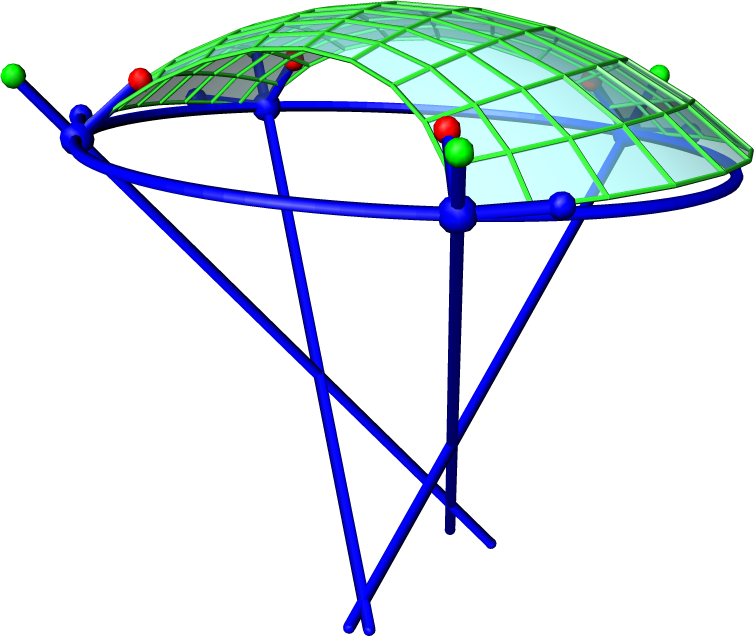}\quad\quad\quad
    \includegraphics[scale=.1]{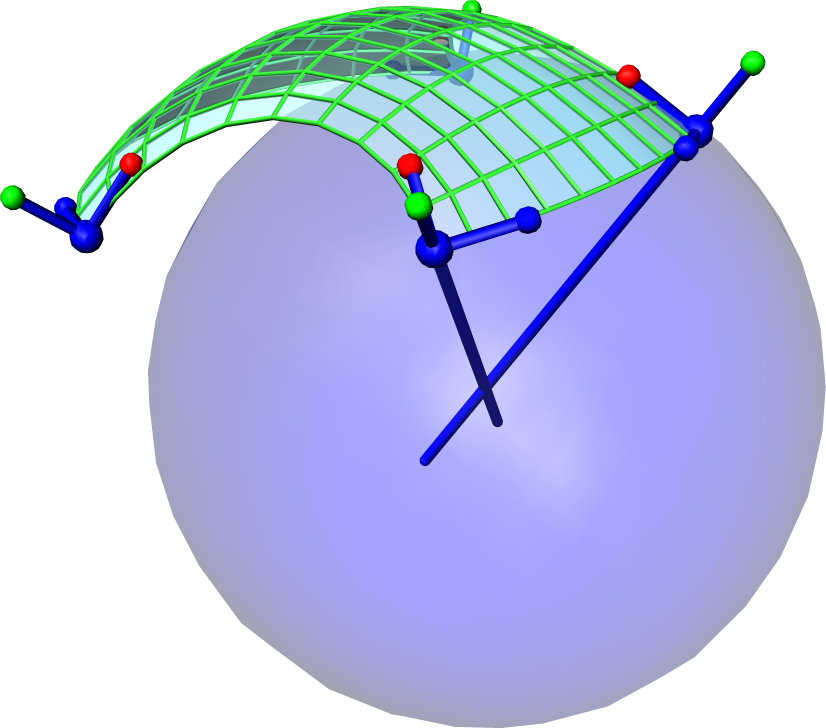}
\caption{Geometry of a cyclidic patch.}
\label{fig:patch_geometry}
\end{figure}

\begin{proof}
\textit{i)}
Consider the cyclidic families of spheres $\mathcal C^{(1)},\mathcal C^{(2)}$ enveloping the cyclide
supporting a cyclidic patch $f$.
Denote the boundary spheres of $f$ by $s^{(1)},s_2^{(1)} \in \mathcal{C}^{(1)}$ and $s^{(2)},s_1^{(2)} \in \mathcal{C}^{(2)}$.
The contact elements at vertices of $f$ are given by
$L=\inc[s^{(2)},s^{(1)}],L_1=\inc[s^{(1)},s_1^{(2)}],L_{12}=\inc[s_1^{(2)},s_2^{(1)}]$ and $L_2=\inc[s_2^{(1)},s^{(2)}]$
(cf. Fig.~\ref{fig:patch_contact_elements_generic}).
The space $V = \inc[L,L_1,L_{12},L_2] \subset \RP^{4,2}$ is at most 3-dimensional,
as $V = \inc[s^{(1)},s_1^{(2)},s_2^{(1)},s^{(2)}]$.
Since $V$ contains proper spheres, the intersection $V \cap \P(\e_r^\perp)$ is at most 2-dimensional.
By Lemma~\ref{lem:circle_planar_representatives} the vertices $(x,x_1,x_{12},x_2)$ are concircular.
\begin{figure}[htb]
\centering
 \input{ 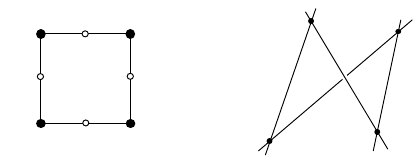_t } 
\caption{Combinatorics and geometry of contact elements
(as isotropic lines in $\mathcal Q^{4,2}$)
at vertices of a cyclidic patch.}
\label{fig:patch_contact_elements_generic}
\end{figure}

\textit{ii)}
The sphere $s^{(i)}$ is symmetric with respect to $h^{(i)}$ which implies $H^{(i)}(n)=n_i$.
Moreover $H^{(i)}(t^{(i)})=t^{(i)}_i$,
since each circular arc containing $x$ and $x_i$ is symmetric with respect to $h^{(i)}$.
As $H^{(i)}$ is an isometry,
orthogonality is preserved and $H^{(i)}(t^{(j)}) = \pm t^{(j)}_i$ for $i \ne j$.
Thus $B$ and $B_i$ are necessarily related by \eqref{eq:vertex_frames_reflection}
or \eqref{eq:vertex_frames_reflection_nonembedded} and it remains to distinguish those cases:
Since we know that a cyclidic patch contains either 0,1, or 2 singular points,
Fig.~\ref{fig:patch_singular_points} shows all possible cases
(up to M\"obius transformations).
It turns out that $H^{(i)}(t^{(j)}) = - t^{(j)}_i$,
i.e. \eqref{eq:vertex_frames_reflection_nonembedded} holds,
if and only if $B$ and $B_i$ are of same orientation.
This happens if and only if $x,x_i$ are separated by $x_j,x_{12}$ on the circle through vertices.
\end{proof}

\paragraph{Spherical patches.}
\label{par:spherical_patches}

A sphere $s$ may be seen as the limit $\varepsilon \to 0$ of a family
$\mathcal{C}_\varepsilon$ of Dupin cyclides.  In this case exactly one of the
two corresponding families of conics $\mathcal{C}_\varepsilon^{(i)} \subset
\mathcal{Q}^{4,2}$ converges to $s \in \mathcal{Q}^{4,2}$ (cf.
Fig.~\ref{fig:cyclidic_planes}).  We call the corresponding limits of cyclidic
patches \emph{spherical patches} and consider them as degenerated cyclidic
patches for a unified treatment.  Continuity implies that a spherical patch also
has circular boundary arcs, and that the crucial geometric properties of
cyclidic patches described in
Proposition~\ref{prop:geometric_properties_of_cyclidic_patches} hold as well
(cf. Figs.~\ref{fig:patch_geometry_spherical} and
\ref{fig:patch_contact_elements_spherical}).  Note that the supporting sphere is
determined by the boundary curves of a spherical patch, while in contrast the
boundary curves of a generic cyclidic patch are never contained in a single
sphere.

\pagebreak

\begin{figure}[htb]
\begin{center}
    \includegraphics[scale=.095]{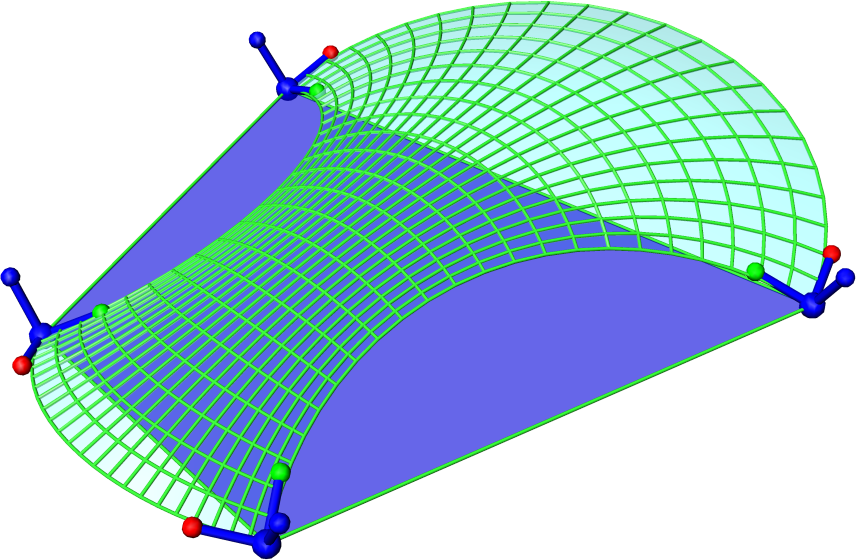}\quad\quad
    \includegraphics[scale=.095]{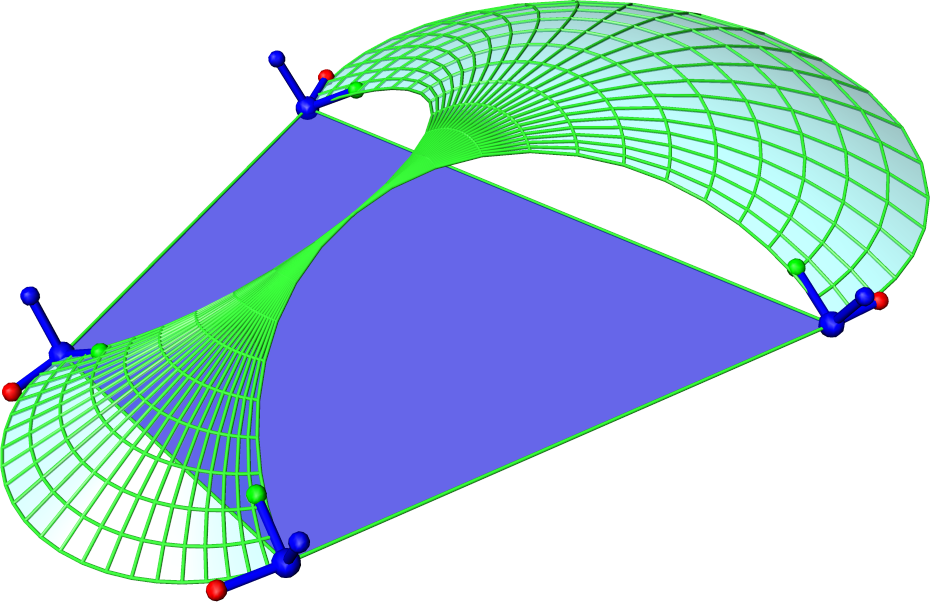}\quad\quad
    \includegraphics[scale=.095]{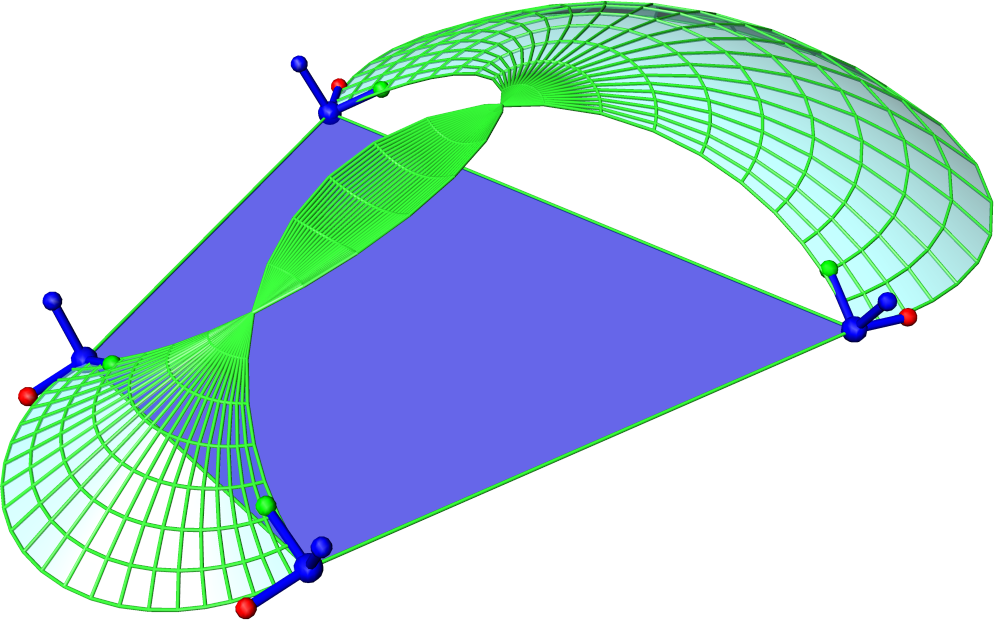}\\
		\medskip
    \includegraphics[scale=.14]{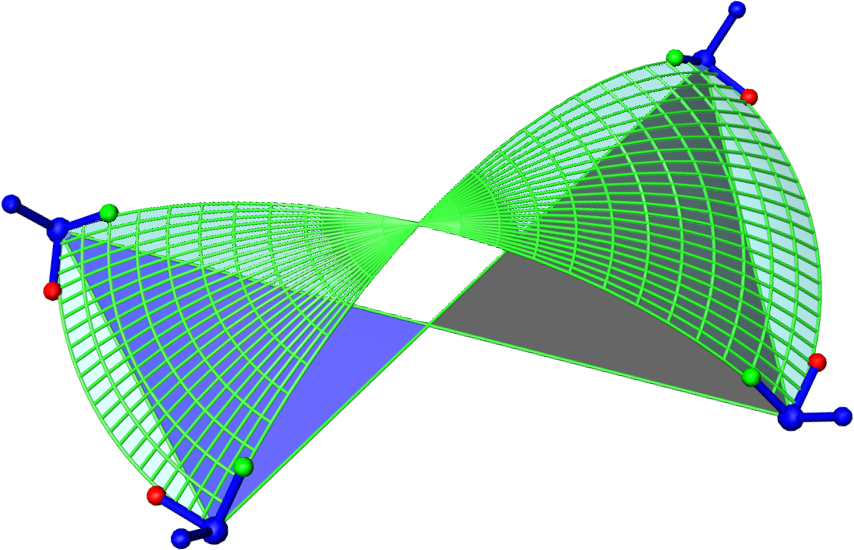}
\end{center}
\caption{Top: If the vertices $(x,x_1,x_{12},x_2)$ of a cyclidic patch build an embedded quadrilateral,
vertex frames at adjacent vertices have opposite orientation.
Bottom: Orientations of adjacent vertex frames of a cyclidic patch,
for which the vertices build a non-embedded quadrilateral,
may coincide.
}
\label{fig:patch_singular_points}
\end{figure}

\begin{figure}[htb]
\begin{center}
    \includegraphics[scale=.1]{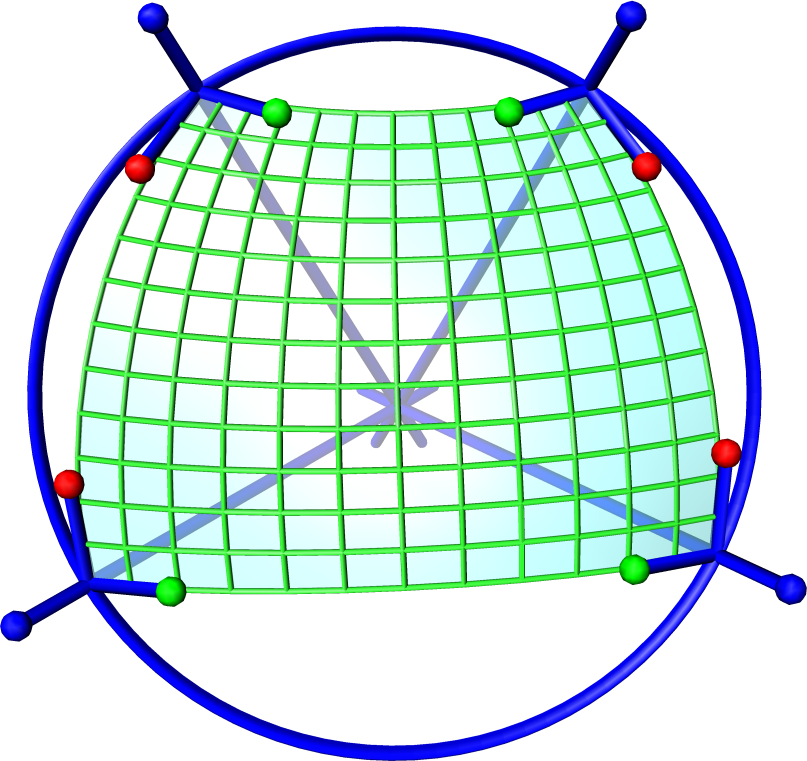}\quad\quad\quad
    \includegraphics[scale=.11]{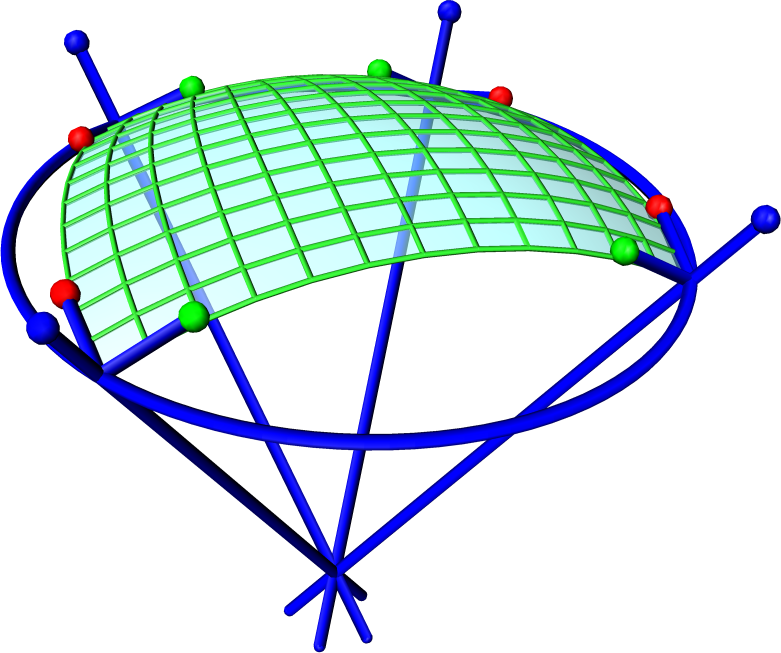}\quad\quad\quad
    \includegraphics[scale=.1]{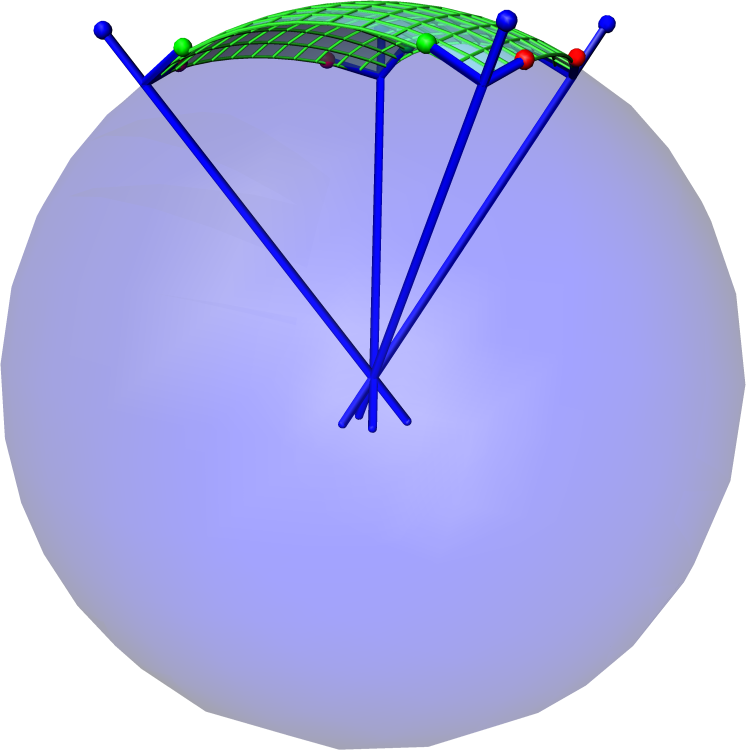}
\end{center}
\caption{Proposition~\ref{prop:geometric_properties_of_cyclidic_patches} also holds for spherical patches.}
\label{fig:patch_geometry_spherical}
\end{figure}

\begin{figure}[htb]
\begin{center}
 \input{ 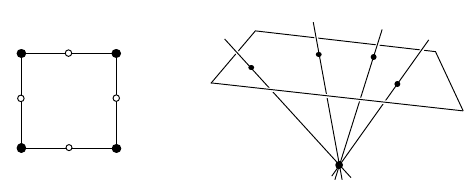_t } 
\end{center}
\caption{Combinatorics and geometry of contact elements
(as isotropic lines in $\mathcal Q^{4,2}$) at vertices of a spherical patch.
The Vertices are concircular,
i.e. their representatives in the Lie quadric are coplanar,
and the contact elements span a 3-space.}
\label{fig:patch_contact_elements_spherical}
\end{figure}

\begin{proposition}
For a spherical patch there is a unique pair of orthogonal 1-parameter families of circles on the supporting sphere,
determined by the condition that the circular boundary arcs of the patch are contained in circles of those families.
\label{prop:ortho_circle_families}
\end{proposition}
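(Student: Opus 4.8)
The plan is to pass to the Möbius geometry of the supporting sphere, where an orthogonal pair of circle families turns into a pair of mutually polar pencils, so that existence and uniqueness both follow from the elementary fact that a projective line is spanned by any two of its points. First I would associate to every circle $C$ on the supporting sphere $s$ the unique sphere $\sigma_C$ through $C$ that meets $s$ orthogonally. Since orthogonality to $s$ reads $\langle \what\sigma_C,\what s\rangle=0$, these spheres are exactly the spacelike points of the polar hyperplane $\pol[s]\cong\RP^{3,1}\subset\RP^{4,1}$, and I claim that two circles $C_1,C_2$ on $s$ intersect orthogonally if and only if $\langle\what\sigma_{C_1},\what\sigma_{C_2}\rangle=0$. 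This is a local computation at a common point $p$: both $\sigma_{C_i}$ are orthogonal to $s$, so their normals at $p$ lie in $T_p s$, and orthogonality of the circles $C_i=s\cap\sigma_{C_i}$ inside $T_ps$ is equivalent to orthogonality of these normals, hence to polarity of the $\what\sigma_{C_i}$. Thus ``orthogonal families of circles on $s$'' becomes ``mutually polar families of spacelike points in $\RP^{3,1}$''.

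Next I would exhibit the pair of families. Let $A_1,A_2$ be the two (distinct) circles carrying the opposite boundary arcs $\widehat{x,x_1}$, $\widehat{x_2,x_{12}}$, and let $B_1,B_2$ carry $\widehat{x,x_2}$, $\widehat{x_1,x_{12}}$; write $\what a_i,\what b_i\in\pol[s]$ for the representatives of the associated orthogonal spheres. I set $\mathcal{F}_1:=\inc[A_1,A_2]$ and $\mathcal{F}_2:=\pol[A_1,A_2]$, both of which are projective lines in $\RP^{3,1}$ and are polar to each other, so every member of $\mathcal{F}_1$ meets every member of $\mathcal{F}_2$ orthogonally. By construction $A_1,A_2\in\mathcal{F}_1$. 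For the two remaining arcs I use that the boundary arcs meet orthogonally at the vertices: $B_1$ is orthogonal to $A_1$ at $x$ and to $A_2$ at $x_2$, so $\what b_1\in\{\what a_1,\what a_2\}^\perp$, i.e.\ $B_1\in\pol[A_1,A_2]=\mathcal{F}_2$; likewise $B_2\in\mathcal{F}_2$ via the vertices $x_1,x_{12}$. Hence $\mathcal{F}_2$ already contains the spacelike points $B_1,B_2$, so both pencils consist of honest circles and carry the four boundary arcs. This produces the required orthogonal pair.

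For uniqueness I would show that the orthogonality constraint alone forces any admissible pair to be these two pencils. Let $V_1,V_2\subset\R^{3,1}$ be the linear spans of the representatives of an admissible pair of families. Mutual orthogonality gives $V_1\subseteq V_2^\perp$ and $V_2\subseteq V_1^\perp$. Since an admissible first family contains $A_1,A_2$ and a second contains $B_1,B_2$, we have $V_1\supseteq\vspan(\what a_1,\what a_2)$ and $V_2\supseteq\vspan(\what b_1,\what b_2)$, two $2$-planes; as $\what b_i\perp\what a_1,\what a_2$ and $\dim\R^{3,1}=4$, these $2$-planes are orthogonal complements of each other. Then $V_2\subseteq V_1^\perp\subseteq\vspan(\what a_1,\what a_2)^\perp=\vspan(\what b_1,\what b_2)\subseteq V_2$ forces equality throughout, so $V_1=\vspan(\what a_1,\what a_2)$ and $V_2=\vspan(\what b_1,\what b_2)$; the families are exactly the two polar pencils $\mathcal{F}_1,\mathcal{F}_2$.

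The hard part will be the careful bookkeeping in the two reductions: verifying that the circle--orthogonal-sphere correspondence genuinely turns orthogonal intersection into polarity, and confirming the reality/signature claim that the polar pencils do not degenerate into families without real circles --- here guaranteed because each pencil already contains two of the boundary circles as spacelike representatives (this also tacitly uses the mild non-degeneracy $A_1\neq A_2$, $B_1\neq B_2$ of a genuine patch). Finally I would note that the result is consistent with the limiting definition of spherical patches: $\mathcal{F}_1,\mathcal{F}_2$ are the limits of the two orthogonal families of curvature circles of the approximating Dupin cyclides $\mathcal{C}_\varepsilon$, which are orthogonal for every $\varepsilon$ and depend continuously on the data, so both orthogonality and the incidence with the boundary arcs pass to the limit.
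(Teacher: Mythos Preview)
Your argument is correct and takes a genuinely different route from the paper. The paper's proof is purely elementary: it picks one pair of opposite boundary circles, applies a M\"obius transformation to send them to a standard configuration (concentric circles, parallel lines, or intersecting lines according to whether the two circles share $0$, $1$, or $2$ points), and then reads off existence and uniqueness of the orthogonal pair of pencils by inspection in each normal form. Your approach instead works in the M\"obius model $\pol[s]\cong\RP^{3,1}$, identifies circles on $s$ with spacelike points there, and reduces both existence and uniqueness to the single linear-algebra fact that a $2$-plane and its orthogonal complement in $\R^{3,1}$ determine one another. This buys you a coordinate-free argument without case analysis; the paper's version buys brevity and a direct picture (their Fig.~\ref{fig:ortho_pencils}) of what the three pencil types actually look like.

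Two small points to tidy up. First, your orthogonality criterion $\langle\what\sigma_{C_1},\what\sigma_{C_2}\rangle=0$ is argued only at a common point, so for the existence half you should note (or at least remark) that polar spacelike $\what\sigma_{C_1},\what\sigma_{C_2}\in\pol[s]$ automatically force $C_1\cap C_2\neq\emptyset$: the span of $\what s,\what\sigma_{C_1},\what\sigma_{C_2}$ is then positive definite, so its orthogonal complement in $\R^{4,1}$ has signature $(1,1)$ and contains isotropic vectors. Second, your non-degeneracy assumption $A_1\neq A_2$, $B_1\neq B_2$ is exactly the issue the paper also has to face; they handle the case where one pair coincides by simply switching to the other pair, and your framework does the same (define the pencils starting from $B_1,B_2$ instead). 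You flag this but could make the fix explicit in one line.
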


\begin{proof}
Consider a spherical patch $f$ contained in a sphere,
and the four circles determined by the boundary arcs of $f$.
First suppose that the four circles are distinct and consider one pair of opposite circles.
Those two circles share either 0, 1, or 2 points.
These cases can be normalized by a M\"obius transformation to the cases of two concentric circles,
two parallel lines, or two intersecting lines.
In each case the two families of orthogonal circles are uniquely determined
(cf. Fig.~\ref{fig:ortho_pencils}).
If two boundary circles coincide,
the two others have to be distinct and one argues with the same normalization.
\begin{figure}[htb]
\begin{center}
 \input{ 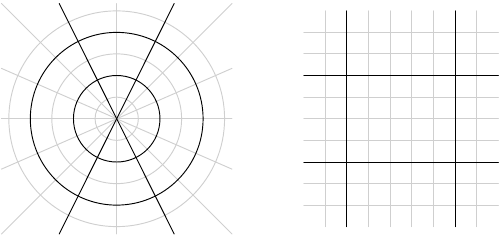_t } 
\end{center}
\caption{There are two types of orthogonal pairs of 1-parameter families of circles.
The type of a pair is determined by two circles from one family.
}
\label{fig:ortho_pencils}
\end{figure}
\end{proof}

\pagebreak

A point $x$ of a spherical patch is a singular point
if all circular arcs of one family pass through $x$
(cf. Fig.~\ref{fig:ortho_pencils}).
Considering the two families of circles on the whole sphere,
one has exactly 2 or 1 points of this type.

Proposition~\ref{prop:ortho_circle_families} has the following consequence:

\begin{corollary}
For a spherical patch there exist orthogonal parametrizations with all coordinate lines being circular arcs.
The parameter lines are uniquely determined by the patch vertices and the boundary arcs.
\label{cor:ortho_param_of_spheres}
\end{corollary}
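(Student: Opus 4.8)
The plan is to read the parametrization directly off Proposition~\ref{prop:ortho_circle_families}. That proposition furnishes, for a given spherical patch $f$ on its supporting sphere, a unique pair of orthogonal $1$-parameter families of circles $(\mathcal{F}^{(1)},\mathcal{F}^{(2)})$ whose members contain the four boundary arcs of $f$. I would use these two families as the two coordinate directions, taking the circles of $\mathcal{F}^{(1)}$ as the $t_1$-coordinate lines and those of $\mathcal{F}^{(2)}$ as the $t_2$-coordinate lines. This is the exact degenerate analog of the cyclidic situation, where the two polar families of spheres in Theorem~\ref{thm:parametrization_of_cyclides} induce the two curvature-line directions.

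To build the map itself, I would first choose parametrizations $t_i \mapsto c^{(i)}(t_i)$ of the two families normalized so that the boundary circles of $f$ correspond to the parameter values $0$ and $1$, exactly as in the cyclidic case. Through each non-singular point of the sphere passes exactly one circle from each family, and circles from complementary families meet orthogonally; restricted to $f$, the circles $c^{(1)}(t_1)$ and $c^{(2)}(t_2)$ intersect in a single point of the patch. Declaring this intersection to be the image of $(t_1,t_2)$ then yields a map $I_1 \times I_2 \to f$ whose coordinate lines are arcs of the $c^{(i)}$, hence circular arcs, and whose coordinate directions are orthogonal. As in the Remark following Theorem~\ref{thm:parametrization_of_cyclides}, the map may collapse along a parameter line at a singular point, which is admissible for the statement.

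Orthogonality of the coordinate lines is then immediate, being precisely the orthogonality of $\mathcal{F}^{(1)}$ and $\mathcal{F}^{(2)}$, and the circular-arc property holds by construction. For the uniqueness assertion I would argue at the level of parameter \emph{lines} rather than of the maps: any orthogonal parametrization of $f$ by circular arcs whose boundary coordinate lines are the given boundary arcs must have its two foliations lying in two orthogonal families of circles through the boundary circles, and by the uniqueness part of Proposition~\ref{prop:ortho_circle_families} these families must coincide with $(\mathcal{F}^{(1)},\mathcal{F}^{(2)})$. Thus the two coordinate foliations of $f$ are pinned down by the vertices and boundary arcs, while the individual parametrization of each family (its traversal speed) remains free.

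The step I expect to be the main obstacle is the well-definedness of the map away from the boundary: two distinct circles on a sphere generically meet in two points, so I must check that restricting to the rectangle $I_1 \times I_2$ selects a consistent intersection point of $c^{(1)}(t_1)$ and $c^{(2)}(t_2)$ inside $f$ and avoids the singular points where all circles of one family concur (cf. the discussion following Proposition~\ref{prop:ortho_circle_families}). This is exactly the local analysis already carried out in the M\"obius-normalized pictures of Fig.~\ref{fig:ortho_pencils}, so I expect it to reduce to continuity together with those three normal forms.
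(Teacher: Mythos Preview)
Your proposal is correct and follows exactly the route the paper intends: the corollary is stated there without proof, merely as an immediate consequence of Proposition~\ref{prop:ortho_circle_families}, and your argument spells out precisely the details that this implication entails. The only remark is that your discussion is considerably more detailed than what the paper provides (it gives none), but the underlying idea---read off the two orthogonal circle families from the proposition, use their unique intersection inside the patch as the image point, and invoke the uniqueness clause of the proposition for the uniqueness of the parameter lines---is identical.
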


\subsection{Cyclidic patches determined by frames at vertices}

\begin{theorem}
Given four concircular points, there is a 3-parameter family of
cyclidic patches with these vertices.
Each such patch can be identified with an orthonormal 3-frame at one vertex.
\label{thm:patches_for_concircular_points}
\end{theorem}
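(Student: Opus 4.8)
The plan is to prove the theorem by exhibiting an explicit reconstruction of a cyclidic patch from a frame, thereby establishing a bijection between cyclidic patches with the prescribed concircular vertices and orthonormal $3$-frames at one of them, say $x$. The direction ``patch $\to$ frame'' is immediate, since a cyclidic patch carries a well-defined vertex frame $B=(t^{(1)},t^{(2)},n)$ at $x$ by definition; it therefore suffices to show that this assignment is onto and injective. I would work throughout in the projective model of Lie geometry, building the supporting cyclide as a polar decomposition in the sense of Theorem~\ref{thm:dupin_cyclides_projective} and reading off the patch from the parametrization of Theorem~\ref{thm:parametrization_of_cyclides}. The three continuous degrees of freedom of the frame split naturally into two for the normal $n\in\S^2$ and one for the tangent $t^{(1)}$ inside the plane $n^{\perp}$ (with $t^{(2)}=\pm\,n\times t^{(1)}$ a discrete choice), and the strategy is to match these against two degrees of freedom in the boundary data and one in the choice of supporting cyclide.

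First I would reconstruct the four boundary spheres from $n$ alone. The normal $n$ fixes the oriented tangent plane at $x$, hence the contact element $L\subset\mathcal{Q}^{4,2}$ at $x$; the boundary spheres $s^{(1)},s^{(2)}\in L$ are then the unique members of this pencil passing through $x_1$ and $x_2$ respectively. Propagating along the boundary, the contact element $L_1$ at $x_1$ is forced to be tangent to $s^{(1)}$ at $x_1$ (since $s^{(1)}$ supports the arc $\widehat{x,x_1}$), which determines $s_1^{(2)}\in L_1$ as the sphere through $x_{12}$, and symmetrically $s_2^{(1)}$; thus all four boundary spheres are functions of $n$ and the vertices. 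The consistency requirement at $x_{12}$ --- that $s_1^{(2)}$ and $s_2^{(1)}$ be in oriented contact with contact point $x_{12}$ --- I would deduce from the concircularity of the vertices together with the reflection identity of Lemma~\ref{lem:reflection_lemma}, exactly as in the proof of Proposition~\ref{prop:geometric_properties_of_cyclidic_patches}.

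It remains to produce the supporting cyclide. The spans $\ell_1=\inc[s^{(1)},s_2^{(1)}]$ and $\ell_2=\inc[s^{(2)},s_1^{(2)}]$ are mutually polar projective lines, so the planes $P^{(2)}\supseteq\ell_2$ lying inside $\pol[\ell_1]\cong\RP^3$ form a pencil; each member yields a polar pair $P^{(1)}=\pol[P^{(2)}]\supseteq\ell_1$, $P^{(2)}$ of signature $(++-)$ and hence, by Theorem~\ref{thm:dupin_cyclides_projective}, a cyclide through the four boundary spheres. This is the remaining continuous parameter, and I would pin it down using $t^{(1)}$: by Proposition~\ref{prop:curvature_spheres_touch_along_circles} the principal direction $t^{(1)}$ at $x$ is tangent to the contact locus of the fixed sphere $s^{(1)}$ with the family $\mathcal{C}^{(2)}=P^{(2)}\cap\mathcal{Q}^{4,2}$, and as $P^{(2)}$ runs through the pencil this tangent rotates within $n^{\perp}$. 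Selecting the member realizing the prescribed $t^{(1)}$ fixes the cyclide, and restricting its curvature line parametrization \eqref{eq:parametrization_of_cyclides} to the rectangle cut out by the four boundary spheres produces a patch whose vertex frame at $x$ is $B$ by construction.

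The main obstacle is this last step: showing that the map from the pencil parameter to the principal direction $t^{(1)}$ is a genuine bijection, so that the cyclide exists and is unique for each admissible frame. I would prove this by expressing the tangent to the conic $\mathcal{C}^{(1)}$ at $s^{(1)}$ as a function of the pencil parameter via Proposition~\ref{prop:parametrization_of_cyclidic_family} and Theorem~\ref{thm:parametrization_of_cyclides}, and checking that the induced Euclidean principal direction depends monotonically on it and covers the required range in $n^{\perp}$; the two degenerate ends of the pencil, where $P^{(2)}$ leaves signature $(++-)$, are the spherical-patch limits, which by Proposition~\ref{prop:ortho_circle_families} and Corollary~\ref{cor:ortho_param_of_spheres} still carry a well-defined orthogonal circle net and so close up the family continuously. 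Finally, the discrete orientation ambiguity --- the sign of $n$ and the $F^{(j)}$-reversal distinguishing embedded from non-embedded vertex quadrilaterals --- is accounted for precisely by Proposition~\ref{prop:geometric_properties_of_cyclidic_patches}, which together with the dimension match $2+1=3$ completes the identification of patches with frames.
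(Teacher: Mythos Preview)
Your overall architecture matches the paper's: reduce to showing that any frame $B=(t^{(1)},t^{(2)},n)$ at $x$ determines a unique patch, first recover the four boundary spheres from $n$ via the reflected contact elements, and then pin down the supporting cyclide using the remaining tangent datum, treating the spherical case as a continuous limit. Where your route diverges is precisely at the step you flag as the ``main obstacle.'' You parametrize the candidate cyclides by the pencil of planes $P^{(2)}\supset\ell_2$ inside $\pol[\ell_1]$ and propose to show that the induced map ``pencil parameter $\mapsto$ principal direction $t^{(1)}$'' is a bijection via a monotonicity argument. The paper instead constructs the inverse of this map \emph{directly}: the pair $(x,x_i,t^{(i)})$ determines the boundary circle $k^{(i)}\subset s^{(i)}$, hence by Proposition~\ref{prop:contact_element_cone_circle} a $3$-space $V\subset\pol[s^{(i)}]$, and one simply sets
\[
P^{(j)} \;=\; V \cap \pol[s_j^{(i)}].
\]
Uniqueness is immediate because $s^{(i)}$ and $s_j^{(i)}$ are not in oriented contact (signature $(++-)$ of $P^{(i)}$), so $V\not\subset\pol[s_j^{(i)}]$ and the intersection has the right dimension. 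This buys you existence and uniqueness of the cyclide in one stroke, with no monotonicity or covering analysis required.

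There is also a point your proposal glosses over with ``by construction'': having built the cyclide so that $k^{(i)}$ is a curvature line, you must still check that the \emph{other} boundary circle $k^{(j)}$ (determined by $t^{(j)}$) is a curvature line of the same cyclide, i.e.\ that the construction is independent of the choice $i\in\{1,2\}$. The paper proves this explicitly: the curvature circle of $\mathcal C$ on $s^{(j)}$ passes through $x$ with tangent $t^{(j)}$ (by orthogonality of curvature lines) and through $x_j$ (since $x_j\in\mathcal C\cap s^{(j)}$), hence equals $k^{(j)}$. Without this, you have not verified that the vertex frame of the produced patch is the full $B$ rather than just $(t^{(1)},\ast,n)$.
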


\begin{proof}
Due to Proposition~\ref{prop:geometric_properties_of_cyclidic_patches}
it is enough to show that for concircular points $X=(x,x_1,x_{12},x_2)$
and any orthonormal 3-frame $B=(t^{(1)},t^{(2)},n)$
there is a unique cyclidic patch with $X$ as vertices and vertex frame $B$ at $x$.

Denote by $c_X$ the circle through $X$.
First obtain frames $B_1,B_{12},B_2$ at $x_1,x_{12},x_2$ from $B$ by successive
application of \eqref{eq:vertex_frames_reflection} and possibly \eqref{eq:vertex_frames_reflection_nonembedded},
depending on the ordering of the points $X$ on $c_X$.
The frames are well defined:
In the case that $X$ are vertices of an embedded quad,
this follows immediately from Lemma~\ref{lem:reflection_lemma}.
In the non-embedded case the additional orientation changes
involved in \eqref{eq:vertex_frames_reflection_nonembedded} cancel out
($B,B_i$ are related by \eqref{eq:vertex_frames_reflection_nonembedded}
if and only if $B_j,B_{ij}$ are as well).

The frames at vertices induce contact elements $\mathcal L = (L,L_1,L_{12},L_2)$,
where $L=(x,n)$, etc.
Due to \eqref{eq:vertex_frames_reflection}, \eqref{eq:vertex_frames_reflection_nonembedded},
contact elements $L,L_i$ share a sphere as depicted in Fig.~\ref{fig:principal_sphere}:
The intersection point $c^{(i)}$ is the center of a sphere $s^{(i)} = L \cap L_i$ with unoriented radius
$r^{(i)} = \|x-c^{(i)}\| = \|x_i - c^{(i)}\|$.
\begin{figure}[htb]
\centering
 \input{ 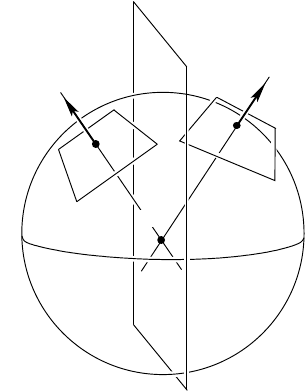_t } 
\caption{Construction of proper boundary spheres of a cyclidic patch.}
\label{fig:principal_sphere}
\end{figure}
Note that $x,x_i$ and $B$ already determine a circular arc $\what{x,x_i} \subset \R^3$.
Since $x,x_i \in s^{(i)}$ and $t^{(i)}$ is tangential to $s^{(i)}$ at $x$,
we have $\what{x,x_i} \subset s^{(i)}$.

\noindent \textbf{The generic case}.
Generically,
i.e. if and only if $x + \R n$ and the axis of $c_X$ are skew,
$\mathcal{L}$ is as in Fig.~\ref{fig:patch_contact_elements_generic}.
Fix $i,j \in \left\{ 1,2 \right\}$ such that $i \ne j$
and denote $k^{(i)}$ the unique circle containing the circular arc $\what{x,x_i}$
determined by $x,x_i$ and $t^{(i)} \in B$.
First we show that there is a unique cyclide containing the contact elements $\mathcal{L}$,
for which $k^{(i)} \subset s^{(i)}$ is a curvature line:

Denote by $V \subset \pol[s^{(i)}]$ the 3-dimensional subspace
of $\RP^{4,2}$ corresponding to $k^{(i)}$
(cf. Proposition~\ref{prop:contact_element_cone_circle}).
According to Theorem~\ref{thm:dupin_cyclides_projective},
every cyclide can be described by a plane $P^{(j)} \in \cycplanes$
(determining $P^{(i)}$ via duality).
The corresponding cyclide touches $s^{(i)}$ in $k^{(i)}$ if and only if $V = \inc[P^{(j)},s^{(i)}]$,
see also Proposition~\ref{prop:curvature_spheres_touch_along_circles},
and this determines $P^{(j)}$ uniquely because of the following:
Due to the signature $(++-)$ of $P^{(i)}$,
the spheres $s^{(i)}$ and $s_j^{(i)}$ cannot be in oriented contact.
Therefore $s^{(i)} \in V$ implies $V \not\subset \pol[s_j^{(i)}]$.
Since $\dim V = 3,\, \dim P^{(j)} = 2$, and $P^{(j)} \subset \pol[s_j^{(i)}]$,
it follows $P^{(j)} = V \cap \pol[s_j^{(i)}]$.
The obtained cyclide $\mathcal{C}$ is independent of the choice of $i \in \left\{ 1,2 \right\}$,
since $k^{(j)}$ is also a curvature line of $\mathcal{C}$:
We know that $s^{(j)} \supset k^{(j)}$ is a curvature sphere of $\mathcal{C}$ and therefore
$\mathcal{C}$ touches $s^{(j)}$ in a unique circle $k$.
This circle $k$ is tangent to $t^{(j)}$ in $x$ due to the orthogonality of curvature lines.
Thus $x_j \in \mathcal{C} \cap s^{(j)}$ implies $x_j \in k$,
and we have $k = k^{(j)}$
as $k^{(j)}$ is the unique circle through $x$ and $x_j$ which is tangent to $t^{(j)}$ in $x$.
According to this, all four circles containing the circular arcs determined by
vertices $X$ and frames $(B,B_1,B_{12},B_2)$ are curvature line of $\mathcal{C}$.

Finally, the frame $B$ at $x$ determines the orientation of curvature lines of $\mathcal{C}$,
and hence a unique cyclidic patch for the given data (cf. Fig.~\ref{fig:torus_patch}).

\begin{figure}[htb]
\centering
 \input{ 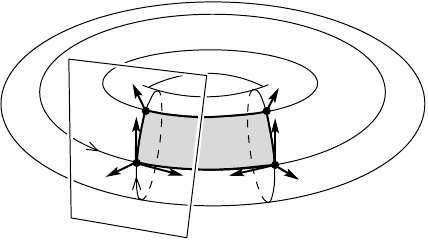_t } 
\caption{ Vertices and vertex frames of a generic cyclidic patch determine the
supporting cyclide and the patch itself.  }
\label{fig:torus_patch}
\end{figure}

\noindent \textbf{The spherical case}. The contact elements
$\mathcal{L}$ are as in
Fig.~\ref{fig:patch_contact_elements_spherical} and share exactly one common
sphere if and only if $x + \R n$ intersects the axis of the circle $c_X$ or is
parallel to it. Clearly there is a 1-parameter family of such normals $n$, and
they are in bijection with all oriented spheres containing $c_X$. Denote the
corresponding 2-parameter family of frames by $\mathcal{B}_S$.  As there is a
3-parameter family $\mathcal{B}$ of orthonormal frames at $x$, for a frame $B
\in \mathcal{B}_S$ it is possible to choose a continuous variation of generic
frames $B_\varepsilon \in \mathcal{B} \setminus \mathcal B_S$ with
$B_\varepsilon \stackrel{\varepsilon \to 0}{\longto} B$.  According to our
consideration of spherical patches (cf. pp. \pageref{par:spherical_patches}ff)
this induces a continuous variation of cyclidic patches with fixed vertices and
gives a unique spherical patch in the limit.
\end{proof}

Next we describe the restriction of cyclidic patches to subpatches,
determined by the choice of points on the boundary arcs of the initial patch.

\begin{proposition}
For a cyclidic patch with notation as in Fig.~\ref{fig:sub_patch}, points
$\wtilde x_1$ and $\wtilde x_2$ on adjacent boundary arcs determine a unique
cyclidic subpatch.  If $\wtilde x_1$ and $\wtilde x_2$ are non-singular points,
the subpatch has four vertices $(x,\wtilde x_1,\wtilde x_{12},\wtilde x_2)$,
which are concircular, and its vertex frames satisfy
\eqref{eq:vertex_frames_reflection} resp.
\eqref{eq:vertex_frames_reflection_nonembedded} depending on the ordering of
vertices.
\begin{figure}[htb]
\centering
 \input{ 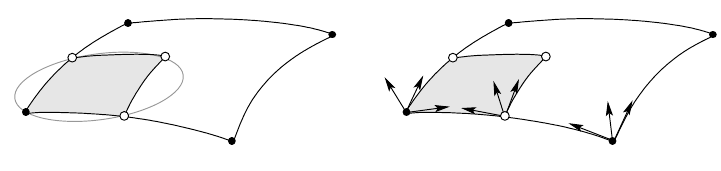_t } 
\caption{Choosing points on the boundary determines a subpatch.}
\label{fig:sub_patch}
\end{figure}
\label{prop:sub_patches}
\end{proposition}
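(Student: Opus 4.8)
The plan is to realize the subpatch directly as a restriction of the curvature line parametrization of the given patch, and then to read off every asserted property from Proposition~\ref{prop:geometric_properties_of_cyclidic_patches}.

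First I would fix a curvature line parametrization $f$ of the form \eqref{eq:parametrization_of_cyclides} of the supporting cyclide $\mathcal C$, with enveloping families $\mathcal C^{(1)},\mathcal C^{(2)}$ and sphere parametrizations $s^{(i)}=[\what s^{(i)}]$, normalized so that the two adjacent boundary arcs in Fig.~\ref{fig:sub_patch} are the coordinate lines $\widehat{x,x_1}=f([0,1]\times\{0\})$ and $\widehat{x,x_2}=f(\{0\}\times[0,1])$. The key observation is that choosing a non-singular point $\wtilde x_1$ on $\widehat{x,x_1}$ amounts to choosing a sphere of the family $\mathcal C^{(1)}$: by (the proof of) Proposition~\ref{prop:curvature_spheres_touch_along_circles} the point $\wtilde x_1$ lies in a unique contact element $L(\tau_1,0)=\operatorname{inc}[s^{(1)}(\tau_1),s^{(2)}(0)]$, which picks out the sphere $s^{(1)}(\tau_1)\in\mathcal C^{(1)}$; and since $s^{(1)}$ is a projective parametrization of the conic $\mathcal C^{(1)}$ it is injective, so the parameter $\tau_1\in(0,1]$ with $\wtilde x_1=f(\tau_1,0)$ is uniquely determined. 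Symmetrically $\wtilde x_2$ determines $\tau_2\in(0,1]$ and a sphere $s^{(2)}(\tau_2)\in\mathcal C^{(2)}$.

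For existence I would then define the subpatch as the restriction $f|_{[0,\tau_1]\times[0,\tau_2]}$, which is a cyclidic patch by Definition~\ref{def:cyclidic_patch}, since it is a restriction of a curvature line parametrization to a closed rectangle. Its fourth vertex is $\wtilde x_{12}=f(\tau_1,\tau_2)$, the unique contact point of $s^{(1)}(\tau_1)$ and $s^{(2)}(\tau_2)$; by Proposition~\ref{prop:curvature_spheres_touch_along_circles} this is exactly the unique intersection of the two curvature lines of $\mathcal C$ through $\wtilde x_1$ and $\wtilde x_2$, so the two remaining boundary arcs $\widehat{\wtilde x_1,\wtilde x_{12}}$ and $\widehat{\wtilde x_2,\wtilde x_{12}}$ are the corresponding sub-arcs of coordinate lines of $f$. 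Uniqueness follows from the same curvature-line picture: any cyclidic subpatch of the given patch having $x$ as a vertex and $\wtilde x_1,\wtilde x_2$ as the two adjacent vertices is supported on the same cyclide $\mathcal C$, so all four of its boundary arcs are curvature lines of $\mathcal C$, i.e. coordinate lines of $f$; matching the prescribed vertices forces the parameter rectangle to be $[0,\tau_1]\times[0,\tau_2]$, whence the subpatch coincides with the one constructed above. Once the subpatch is recognized as a genuine cyclidic patch with vertices $(x,\wtilde x_1,\wtilde x_{12},\wtilde x_2)$, both the concircularity of these vertices and the frame relations \eqref{eq:vertex_frames_reflection}, \eqref{eq:vertex_frames_reflection_nonembedded} are immediate from Proposition~\ref{prop:geometric_properties_of_cyclidic_patches}~i) and ii).

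The step I expect to require the most care is the very first one, namely that a non-singular boundary point pins down a single parameter value. This is precisely where the distinction stressed in the remark after Theorem~\ref{thm:parametrization_of_cyclides} becomes essential: the Euclidean map $\what x$ may be constant along a parameter line at singular points, so injectivity cannot be argued on the level of points alone. One must pass to the contact element $L$, which cannot degenerate, together with the uniqueness of the contact element at a non-singular point; everything afterwards is bookkeeping on the parameter rectangle combined with a direct application of Proposition~\ref{prop:geometric_properties_of_cyclidic_patches}.
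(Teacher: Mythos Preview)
Your argument for the generic (non-spherical) case is essentially the paper's: restrict the curvature line parametrization to the sub-rectangle determined by $\wtilde x_1,\wtilde x_2$ and then invoke Proposition~\ref{prop:geometric_properties_of_cyclidic_patches}. Your discussion of why the parameter values $\tau_1,\tau_2$ are uniquely pinned down (via the contact element and the sphere $s^{(i)}(\tau_i)\in\mathcal C^{(i)}$) is a reasonable elaboration of what the paper leaves implicit.

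The genuine gap is that you have silently assumed the generic case throughout. Your entire setup fixes a parametrization ``of the form \eqref{eq:parametrization_of_cyclides}'' with two honest enveloping families $\mathcal C^{(1)},\mathcal C^{(2)}$, and your key step identifies $\wtilde x_i$ with a sphere of $\mathcal C^{(i)}$. For a \emph{spherical} patch one of the two families collapses to a single sphere (cf.\ the paragraph on spherical patches and Fig.~\ref{fig:patch_contact_elements_spherical}), so neither the parametrization formula \eqref{eq:parametrization_of_cyclides} nor your sphere-selection argument is available, and Proposition~\ref{prop:geometric_properties_of_cyclidic_patches} was only proved for genuine cyclidic patches. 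The paper treats this case separately by the same limiting device used in the spherical case of Theorem~\ref{thm:patches_for_concircular_points}: vary the frame at $x$ through generic frames $B_\varepsilon$, simultaneously vary the boundary points $(\wtilde x_i)_\varepsilon$, obtain generic subpatches $\wtilde f_\varepsilon$, and pass to the limit. You need to add this step (or an equivalent direct argument via Corollary~\ref{cor:ortho_param_of_spheres}) for the proof to be complete.
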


\enlargethispage{2\baselineskip}

\begin{proof}
\noindent \textbf{The generic case.}
Let $f : [0,1] \times [0,1] \to \RI^3$ be a curvature line parametrization of
the patch for which $x = f(0,0), \wtilde x_1 = f(u_0,0), \wtilde x_2 = f(0,v_0),
0 < u_0,v_0 < 1$.  This determines a unique subpatch $\wtilde f =
f\big|_{[0,u_0]\times[0,v_0]}$.  The fact that $x,\wtilde x_1, \wtilde x_2$ are
non-singular implies that $\wtilde x_{12} := f(u_0,v_0)$ is also non-singular
and, in particular, distinct from the initial three points.  Hence the claim
follows from Proposition~\ref{prop:geometric_properties_of_cyclidic_patches}.

\noindent \textbf{The spherical case.}
This is proven analogous to the spherical case of Theorem~\ref{thm:patches_for_concircular_points},
using a variation of frames $B_\varepsilon$ at $x$.
Note that this time one has to consider an additional variation of points $(\wtilde x_i)_\varepsilon$
inducing a variation $\wtilde f_\varepsilon$ of generic subpatches,
for which only the common vertex $x$ is fixed.
\end{proof}

\subsection{Curvature line parametrization of cyclidic patches}
\label{subsec:curvature_line_param_patches}

According to Theorem~\ref{thm:patches_for_concircular_points},
circular vertices $X=(x,x_1,x_{12},x_2)$ and an orthonormal 3-frame $B=(t^{(1)},t^{(2)},n)$ at $x$
determine a unique cyclidic patch.
In the generic case one obtains an Euclidean curvature line parametrization as follows:

\begin{enumerate}
\item[0)]
Identify the notation used in the context of Dupin cyclides,
with the notation for cyclidic patches:
Boundary spheres $s^{(i)},s^{(i)}_j, i \ne j$ of a patch correspond to
spheres $s_1^{(i)},s_3^{(i)}$ used in Proposition~\ref{prop:parametrization_of_cyclidic_family}.
The third sphere $s_2^{(i)}$ needed for a parametrization
\eqref{eq:parametrization_of_cyclidic_family}
of a cyclidic family of spheres is now denoted $\sigma^{(i)}$.
\item[1)]
Determine Lie coordinates \eqref{eq:representative_sphere} of the spheres
$(s^{(1)}, \sigma^{(1)}, s_2^{(1)})$ and
$(s^{(2)}, \sigma^{(2)}, s_1^{(2)})$
from the data $(X,B)$.
How this can be done is described in Proposition~\ref{prop:data_for_generic_patches}.
\item[2)]
Use the coordinates obtained in step 1) in \eqref{eq:parametrization_of_cyclidic_family}.
This gives parametrizations $\what s^{(1)},\what s^{(2)}$ of the cyclidic families of spheres
enveloping the supporting cyclide.
\item[3)]
Putting parametrizations $\what s^{(1)}$ and $\what s^{(2)}$ from step 2) in \eqref{eq:parametrization_of_cyclides}
yields a parametrization $\what x$ of the whole cyclide.
\item[4)]
A Euclidean curvature line parametrization of the patch is given by the $\e_1$-,
$\e_2$-, and $\e_3$- components of the restriction of $\what x$ to
$[0,1]\times[0,1]$.  In particular, $(\frac12,\frac12)$ is mapped to the contact
point of $\sigma^{(1)}$ and $\sigma^{(2)}$.
\end{enumerate}

All 3D pictures in this work except Fig.~\ref{fig:standard_tori} have been produced with
a program which implements the above described algorithm
(see also Section~\ref{subsec:implementation}).

\begin{proposition}
[Lie coordinates for curvature spheres of a cyclidic patch]
Let $f$ be a generic cyclidic patch with concircular vertices
$(x,x_1,x_{12},x_2)$ and vertex frame $B=(t^{(1)},t^{(2)},n)$ at $x$.
Further denote $\delta_i x = x_i - x$.

If $n \not \perp \delta_i x$,
Lie coordinates \eqref{eq:representative_sphere} of the boundary sphere $s^{(i)}$ of $f$ are
\begin{eqnarray}
\nonumber
\what s^{(i)} &=& x + \frac{\|\delta_i x\|^2}{2 \langle \delta_i x,n \rangle} n + \e_0 +
    \frac{\|x\|^2 \langle \delta_i x,n \rangle + \|\delta_i x \|^2 \langle x,n \rangle}{\langle \delta_i x,n \rangle} \e_\infty +\\
    &&  + \, \frac{\|\delta_i x\|^2}{2 \langle \delta_i x,n \rangle} \e_r.
\label{eq:boundary_sphere_from_frame}
\end{eqnarray}
Lie coordinates \eqref{eq:boundary_sphere_from_frame}
of the shifted boundary sphere $s^{(i)}_j$, $i\ne j$,
are obtained using the shifted formula:
$x \to x_j,\, \delta_i x \to \tau_j(\delta_i x) = x_{12} - x_j,\, n \to n_j$,
where the shifted normal $n_j$ at $x_j$ is given by
\begin{equation}
n_j = n - 2 \frac{\langle \delta_j x,n \rangle}{\langle \delta_j x,\delta_j x \rangle} \delta_j x.
\label{eq:shifted_normal}
\end{equation}
If $t^{(i)} \not \parallel \delta_i x$,
Lie coordinates \eqref{eq:representative_point} of the midpoint $y^{(i)}$ of the boundary arc $\widehat{x,x_i} \subset s^{(i)}$ are given by
\begin{equation}
\what y^{(i)} = x + \rho^{(i)} v^{(i)} + \e_0 + \|x + \rho^{(i)} v^{(i)}\|^2 \e_\infty,
\label{eq:arc_midpoint_from_frame}
\end{equation}
where
\begin{eqnarray}
v^{(i)} &=& \|\delta_i x\|t^{(i)} + \delta_i x
    \label{eq:midpoint_1} \\
\rho^{(i)} &=& \frac{\|\delta_i x\|^2}{2 \langle \delta_i x,v^{(i)} \rangle}
    = \frac{\|\delta_i x\|}{2 \langle \delta_i x,t^{(i)} \rangle + 2 \|\delta_i x\|}.
    \label{eq:midpoint_2}
\end{eqnarray}
For the enveloping sphere $\sigma^{(j)}$ of $f$ which touches $s^{(i)}$ in $y^{(i)}$ one obtains
\begin{equation}
\sigma^{(j)} = \left[ \langle \what y^{(i)}, \what s^{(i)}_j \rangle \, \what s^{(i)} -
\langle \what s^{(i)}, \what s^{(i)}_j \rangle \, \what y^{(i)} \right].
\label{eq:curvature_sphere_from_frame}
\end{equation}
Generically $\sigma^{(j)}$ is a proper sphere and its Lie coordinates \eqref{eq:representative_sphere}
are obtained by normalizing the $\e_0$-component of \eqref{eq:curvature_sphere_from_frame} to 1.
\label{prop:data_for_generic_patches}
\end{proposition}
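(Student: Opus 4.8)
The plan is to treat the four objects in turn, in each case first pinning down the underlying Euclidean sphere or point by elementary geometry and then translating into Lie coordinates via \eqref{eq:representative_sphere} and \eqref{eq:representative_point}. I would begin with the boundary sphere $s^{(i)}$. By Proposition~\ref{prop:curvature_spheres_touch_along_circles} it is a principal curvature sphere tangent to the cyclide along the whole arc $\widehat{x,x_i}$, hence tangent at $x$, so the frame normal $n$ is its normal there and its center lies on $x+\R n$. Writing the center as $x+\lambda n$ and demanding that the sphere pass through both $x$ and $x_i$ gives $\lambda^2 = \|\lambda n-\delta_i x\|^2$, whence the oriented radius $r^{(i)}=\lambda=\|\delta_i x\|^2/(2\langle\delta_i x,n\rangle)$; the hypothesis $n\not\perp\delta_i x$ is precisely what keeps this finite, i.e. the sphere proper. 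Substituting center $x+r^{(i)}n$ and radius $r^{(i)}$ into \eqref{eq:representative_sphere} and simplifying the $\e_\infty$-coefficient via $\|c\|^2-(r^{(i)})^2=\|x\|^2+2r^{(i)}\langle x,n\rangle$ yields \eqref{eq:boundary_sphere_from_frame}. For the shifted sphere $s^{(i)}_j$ I would invoke Proposition~\ref{prop:geometric_properties_of_cyclidic_patches}: the vertex frame at $x_j$ has normal $n_j=H^{(j)}(n)$, which by \eqref{eq:bisecting_hyperplane_map} is exactly \eqref{eq:shifted_normal}, and the same computation with $x,\delta_i x,n$ replaced by $x_j,\ x_{12}-x_j,\ n_j$ gives the shifted formula.

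Next I would locate the arc midpoint $y^{(i)}$ of the circular arc $\widehat{x,x_i}\subset s^{(i)}$, which is tangent to $t^{(i)}$ at $x$, using two conditions. For the direction I would use the tangent--chord angle: the tangent--chord angle to the midpoint is half that to the endpoint $x_i$, so the chord from $x$ to $y^{(i)}$ bisects the angle between the tangent $t^{(i)}$ and the chord $\delta_i x$; since $\|\delta_i x\|t^{(i)}$ and $\delta_i x$ have equal length, their sum $v^{(i)}$ in \eqref{eq:midpoint_1} points along this bisector, so $y^{(i)}=x+\rho^{(i)}v^{(i)}$. For the magnitude I would use that equal arcs subtend equal chords, so the midpoint is equidistant from $x$ and $x_i$; solving $\|\rho^{(i)}v^{(i)}\|^2=\|\rho^{(i)}v^{(i)}-\delta_i x\|^2$ gives $\rho^{(i)}=\|\delta_i x\|^2/(2\langle\delta_i x,v^{(i)}\rangle)$, which is \eqref{eq:midpoint_2}, the hypothesis $t^{(i)}\not\parallel\delta_i x$ guaranteeing $v^{(i)}\neq 0$ and $\langle\delta_i x,v^{(i)}\rangle\neq 0$. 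Reading off the point representative \eqref{eq:representative_point} of $y^{(i)}=x+\rho^{(i)}v^{(i)}$ then gives \eqref{eq:arc_midpoint_from_frame}.

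For the enveloping sphere $\sigma^{(j)}$ I would argue in the projective model. Since $y^{(i)}$ lies on the arc $\widehat{x,x_i}\subset s^{(i)}$, the curvature spheres touching the cyclide at $y^{(i)}$ are $s^{(i)}$ (family $i$) and $\sigma^{(j)}$ (family $j$); by Theorem~\ref{thm:dupin_cyclides_projective} these two spheres together with the contact point $y^{(i)}$ lie on the isotropic line $\inc[s^{(i)},y^{(i)}]$, so $\what\sigma^{(j)}\in\vspan(\what s^{(i)},\what y^{(i)})$. Being a member of $\mathcal C^{(j)}$, the sphere $\sigma^{(j)}$ is in oriented contact with every sphere of the complementary family $\mathcal C^{(i)}$, in particular $\langle\what\sigma^{(j)},\what s^{(i)}_j\rangle=0$. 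Imposing this single polarity relation on $\alpha\what s^{(i)}+\beta\what y^{(i)}$ forces $(\alpha,\beta)\propto(\langle\what y^{(i)},\what s^{(i)}_j\rangle,\ -\langle\what s^{(i)},\what s^{(i)}_j\rangle)$, which is exactly \eqref{eq:curvature_sphere_from_frame}; rescaling the $\e_0$-component to $1$ returns the proper-sphere coordinates \eqref{eq:representative_sphere}.

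The step I expect to be the main obstacle is the midpoint: establishing that the angle-bisector direction $v^{(i)}$ genuinely selects the arc midpoint, and not merely some point concircular with $x,x_i$, requires the tangent--chord argument together with a check that one lands on the intended arc; once the direction is fixed, the remaining identities are routine substitutions. A secondary point to verify carefully is that the collinearity $\what\sigma^{(j)}\in\vspan(\what s^{(i)},\what y^{(i)})$ and the single polarity condition determine $\sigma^{(j)}$ uniquely, which rests on $\langle\what s^{(i)},\what s^{(i)}_j\rangle\neq 0$, i.e. that the two family-$i$ boundary spheres are not in oriented contact, as already exploited in the proof of Theorem~\ref{thm:patches_for_concircular_points}.
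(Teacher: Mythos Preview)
Your proposal is correct and follows essentially the same route as the paper: for $s^{(i)}$ you intersect the normal line with the perpendicular bisector of $[x,x_i]$ (you phrase it as equal distances, the paper as $x+\R n\cap h^{(i)}$, which is the same equation), for $n_j$ you both apply the reflection \eqref{eq:bisecting_hyperplane_map}, for $\sigma^{(j)}$ you both solve the single linear condition $\langle\,\cdot\,,\what s^{(i)}_j\rangle=0$ on the isotropic line $\inc[s^{(i)},y^{(i)}]$ and note that uniqueness relies on $\langle\what s^{(i)},\what s^{(i)}_j\rangle\neq 0$. The one place where the arguments differ in flavor is the midpoint: the paper carries out an explicit angle chase through the circle center (showing $\gamma=\alpha/2$ in a figure), whereas you invoke the tangent--chord angle theorem directly to conclude the chord $xy^{(i)}$ bisects the angle between $t^{(i)}$ and $\delta_i x$; both then intersect this bisector with $h^{(i)}$. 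Your version is a bit more concise, the paper's a bit more self-contained, but the geometry is identical.
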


\begin{proof}
Denote $h^{(i)}$ the bisecting plane \eqref{eq:bisecting_hyperplane}.
If $n \not \perp \delta_i x$,
the center $c^{(i)} = x + r^{(i)} n$ of $s^{(i)}$ is the intersection
$x + \R n \cap h^{(i)}$ as shown in Fig.~\ref{fig:principal_sphere}.
Solving $\langle x + r^{(i)}n,\delta_i x \rangle = \langle x + \frac{1}{2} \delta_i x, \delta_i x \rangle$ for $r^{(i)}$ gives
$r^{(i)} = \frac {\|\delta_i x\|^2}{2\langle \delta_i x,n \rangle}$.
Since $\|n\|=1$, the value $r^{(i)}$ is indeed the radius of $s^{(i)}$
and one easily checks that positive radius corresponds to the inward normal field.
Putting $c^{(i)}$ and $r^{(i)}$ in \eqref{eq:representative_sphere} gives \eqref{eq:boundary_sphere_from_frame}.
Here we used
\begin{equation*}
\|c^{(i)}\|^2 - (r^{(i)})^2
    = \|x\|^2 + 2 r^{(i)} \langle x,n \rangle
    = \frac{\|x\|^2 \langle \delta_i x,n \rangle + \|\delta_i x\|^2 \langle x,n \rangle}{\langle \delta_i x,n \rangle}.
\end{equation*}
The shifted normal $n_j$ at $x_j$ is obtained by reflection of $n$ in the bisecting plane
$h^{(j)}$ between $x$ and $x_j$,
i.e. \eqref{eq:shifted_normal} holds
(cf. \eqref{eq:bisecting_hyperplane_map}).

If $t^{(i)} \not \parallel \delta_i x$,
the midpoint $y^{(i)}$ of $\what{x,x_i}$ is the intersection of the angular bisector between
$\delta_i x$ and $t^{(i)}$ with $h^{(i)}$
(cf. Fig.~\ref{fig:boundary_arc_midpoint}-left):
\begin{figure}[htb]
\centering
 \input{ 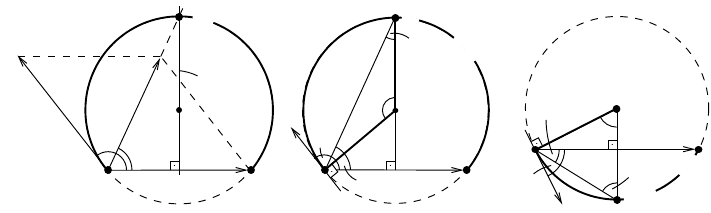_t } 
\caption{Construction of the midpoint of a boundary arc.}
\label{fig:boundary_arc_midpoint}
\end{figure}
Let $\alpha \in (0,\pi)$ be the angle between $t^{(i)}$ and $\delta_i x$,
and denote $c$ the center of the circle determined by $x,x_i$ and its tangent $x+\R t^{(i)}$.
Then the angle $\angle (x,c,y^{(i)})$ also equals $\alpha$
(cf.
Fig.~\ref{fig:boundary_arc_midpoint}-right for $\alpha \in (0,\frac{\pi}{2})$
and Fig.~\ref{fig:boundary_arc_midpoint}-middle for $\alpha \in (\frac{\pi}{2},\pi)$).
Clearly $\|x-c\|=\|y^{(i)}-c\|$, which implies $\beta=\frac{\pi-\alpha}{2}$
and thus $\gamma = \frac{\pi}{2} - \beta = \frac{\alpha}{2}$.
So $y^{(i)} = x + \R v^{(i)} \cap h^{(i)}$
with $v^{(i)}=\|\delta_i x\|t^{(i)} + \delta_i x$ as $\|t^{(i)}\| = 1$.
This time solving $\langle x + \rho^{(i)} v^{(i)},\delta_i x \rangle = \langle x + \frac{1}{2} \delta_i x, \delta_i x \rangle$ for $\rho^{(i)}$ leads to
\[
y^{(i)} = x + \rho^{(i)} v^{(i)}, \quad \rho^{(i)} = \frac {\|\delta_i x\|^2}{2\langle \delta_i x,v^{(i)} \rangle}
    = \frac{\|\delta_i x\|}{2 \langle \delta_i x,t^{(i)} \rangle + 2 \|\delta_i x\|}.
\]
Thus for $t^{(i)} \not \parallel \delta_i x$
the Lie coordinates \eqref{eq:representative_point} of $y^{(i)}$ are exactly \eqref{eq:arc_midpoint_from_frame}.

The last formula \eqref{eq:curvature_sphere_from_frame} follows immediately from
$\sigma^{(j)} = \inc[s^{(i)},y^{(i)}] \cap \pol[s_j^{(i)}]$
by solving
$\langle \lambda \what s^{(i)} + \mu \what y^{(i)},\what s_j^{(i)} \rangle = 0$
(cf. Fig.~\ref{fig:common_polar_projecitve_point}).
The solution is unique (up to normalization of homogeneous coordinates)
as $s^{(i)}$ and $s_j^{(i)}$ are not in oriented contact for generic patches
(cf. p.~\pageref{unique_polar_sphere}).
\begin{figure}[htb]
\centering
 \input{ 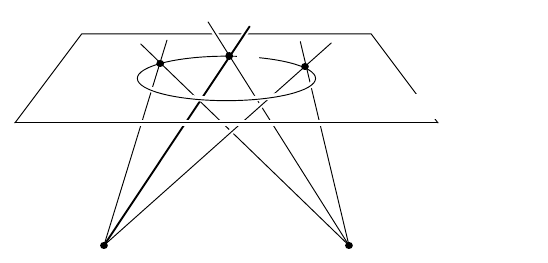_t } 
\caption{There is a unique sphere $\sigma^{(j)}$ touching $s^{(i)}$ in $y^{(i)}$,
which is also in oriented contact with $s_j^{(i)}$.
This determines the plane $P^{(j)} \in \cycplanes$.
(Note that the shown configuration is 4-dimensional).}
\label{fig:common_polar_projecitve_point}
\end{figure}
\end{proof}


\begin{remarks}
\begin{enumerate}
\item Note that \eqref{eq:curvature_sphere_from_frame} can be applied using any point $y^{(i)}$ from the interior of the boundary arc $\what{x,x_i}$.
\item
If $n \perp \delta_i x$, the sphere $s^{(i)}$ becomes a plane with coordinates
$\what s^{(i)} = n + 2 \langle x,n \rangle \e_\infty + \e_r$.
This is obtained for example by
renormalizing \eqref{eq:boundary_sphere_from_frame} via multiplication with
$\frac{2 \langle \delta_i x,n \rangle}{\|\delta_i x\|^2}$
and then taking the limit $\langle \delta_i x,n \rangle \to 0$.
\item
Formula \eqref{eq:arc_midpoint_from_frame} also holds
if $\delta_i x = \|\delta_i x\|t^{(i)}$
and yields
$y^{(i)} = x + \frac 12 \delta_i x$
in this case.
This means the angle $\alpha$ in Fig.~\ref{fig:boundary_arc_midpoint}
equals zero, and $\what{x,x_i}$ is a straight line segment with midpoint $y^{(i)}$.

The case
$\delta_i x = - \|\delta_i x\|t^{(i)}$ is equivalent to $\eqref{eq:midpoint_1}$ becoming zero,
resp. to vanishing of the denominator in \eqref{eq:midpoint_2}.
Geometrically $\alpha = \pi$ and $\what{x,x_i}$ passes through $\infty$.
This time multiplying \eqref{eq:arc_midpoint_from_frame}
with $\langle \delta_i x,v^{(i)} \rangle$ and using $\|t^{(i)}\| = 1$,
one gets $y^{(i)} = \infty$ in the limit $v^{(i)} \to 0$.
\item
If $\sigma^{(j)}$ is a point sphere, its Lie coordinates
\eqref{eq:representative_point} are also obtained by normalizing the
$\e_0$-component of \eqref{eq:curvature_sphere_from_frame} to 1.  If
$\sigma^{(j)}$ turns out to be a plane, Lie coordinates
\eqref{eq:representative_plane} are obtained by normalizing
\eqref{eq:curvature_sphere_from_frame} such that the $\e_1$-, $\e_2$-, and
$\e_3$-components build a unit vector in $\R^3$.
\end{enumerate}
\end{remarks}

\section{Cyclidic nets}
\label{sec:cyclidic_nets}

The discretization of orthogonal nets as cyclidic nets is closely
related to the discretizations as circular, conical and principal
contact element nets (see the introduction). The idea behind
cyclidic nets is to construct piecewise smooth geometries by
gluing cyclidic patches in a differentiable way. For a unified
treatment we define cyclidic nets in a more abstract way in terms
of frames at vertices of a circular net, without referring to
cyclidic patches.

In $\R^3$, the realization of a 2D cyclidic net using cyclidic patches is a piecewise smooth $C^1$-surface,
to be considered as discretization of a curvature line parametrized surface
(cf. Fig.~\ref{fig:cyclidic_from_circular}).
A 3D cyclidic net in turn discretizes a triply orthogonal coordinate system,
by discretizing its coordinate surfaces as 2D cyclidic nets.
This corresponds to the classical Dupin theorem, which states that the restriction of a triply
orthogonal coordinate system to any of its coordinate surfaces (by fixing one coordinate)
yields a curvature line parametrization.
In the discrete case the corresponding 2D cyclidic nets indeed intersect orthogonally along their curvature lines,
although the definition of a 3D cyclidic net requires orthogonality only at vertices (cf. Fig.~\ref{fig:octrihedron}).

\subsection{Circular nets and definition of cyclidic nets}
\label{subsec:cyclidic_nets_definition}

We call a function $f$ on $\Z^m$ a \emph{discrete net} and write
\begin{equation*}
\tau_i f(z) = f(z + \e_i) = f_i(z),
\end{equation*}
where $\e_i$ is the unit vector of the $i$-th coordinate direction.
Mostly we will omit the variable $z$
and denote the functions values by $f,f_i,f_{ij}$, etc.

\begin{definition}[Circular net]
A map $x:\Z^m \to \RI^N$ is called a \emph{circular net},
if at each $z \in \Z^m$ and for all pairs $1 \le i < j \le m$
the points $(x,x_i,x_{ij},x_j)$ lie on a circle.
\label{def:circular_net}
\end{definition}

Circular nets are a discretization of smooth orthogonal nets and objects of M\"obius geometry
(see \cite{BobenkoSuris:2008:DDGBook} for details).
For $N=3$, the case $m = 2$ is a discretization of
curvature line parametrized surfaces in $\RI^3$,
where the case $m = 3$ discretizes triply orthogonal coordinate systems.
A circular net $x:\Z^3 \to \R^N$ is determined by its values along
three coordinate planes intersecting in one point.
This is due to the following classical result
(see for example \cite{BobenkoSuris:2008:DDGBook} for the proof).

\begin{theorem}[Miquel theorem]
Let $x,x_i$, and $x_{ij}$ $(1 \le i < j \le 3)$ be seven points in $\RI^N$,
such that each of the three quadruples $(x,x_i,x_{ij},x_j)$ lies on a circle $c_{ij}$.
Define three new circles $\tau_k c_{ij}$ as those passing through the point triples
$(x_k,x_{ik},x_{jk})$.
Generically these new circles intersect in one point:
$x_{123} = \tau_1 c_{23} \cap \tau_2 c_{13} \cap \tau_3 c_{12}$.
\label{thm:miquel}
\end{theorem}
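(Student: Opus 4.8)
The plan is to reduce the statement, by a M\"obius transformation, to the classical planar Miquel pivot theorem about a triangle carrying one marked point on each side. First I would cut down the dimension. Each circle $c_{ij}$ lies in the $2$-plane spanned by the three (generically non-collinear) points $x,x_i,x_j$, so the fourth point $x_{ij}$ lies in the affine span of $x,x_i,x_j$. Hence all seven given points lie in the affine span of $x,x_1,x_2,x_3$, which is at most $3$-dimensional; since each new circle $\tau_k c_{ij}$ passes through three of these points, it and any common intersection point lie there too. It therefore suffices to treat $N=3$ and work in $\RI^3\cong\S^3$.

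Next I would apply an inversion sending $x$ to $\infty$ and denote images by bars. Because $c_{12},c_{13},c_{23}$ all pass through $x$, their images are straight lines: $\bar c_{12}$ through $\bar x_1,\bar x_2,\bar x_{12}$, and cyclically. These three lines meet pairwise in $\bar c_{12}\cap\bar c_{13}=\bar x_1$, $\bar c_{12}\cap\bar c_{23}=\bar x_2$, $\bar c_{13}\cap\bar c_{23}=\bar x_3$, which are generically three distinct non-collinear points; three lines meeting pairwise in three distinct points are coplanar, so the whole configuration is now planar. Writing $A=\bar x_1$, $B=\bar x_2$, $C=\bar x_3$ and $R=\bar x_{12}\in AB$, $Q=\bar x_{13}\in AC$, $P=\bar x_{23}\in BC$, the three circles to be tested become the circles through $(A,R,Q)$, $(B,R,P)$ and $(C,Q,P)$, i.e. each passes through one vertex and the two side-points on the adjacent sides.

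At this point the claim is exactly the Miquel pivot theorem, which I would prove by a directed-angle chase modulo $\pi$, using additivity $\angle(u,v)+\angle(v,w)=\angle(u,w)$ of directed angles between lines. Let $M$ be the second intersection of the circles $(A,R,Q)$ and $(B,R,P)$, their first common point being $R$. Concyclicity of $A,R,Q,M$ gives $\angle(MQ,MR)=\angle(AC,AB)$, and concyclicity of $B,R,P,M$ gives $\angle(MR,MP)=\angle(AB,BC)$; adding, $\angle(MQ,MP)=\angle(AC,BC)=\angle(CQ,CP)$, so $M$ lies on the third circle $(C,Q,P)$ as well. Pulling $M$ back through the inversion yields the common point $x_{123}$, since M\"obius transformations preserve circles and incidences.

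The hard part is genuinely the planar Miquel theorem itself; once it is isolated by the inversive reduction, the remaining work is the M\"obius-invariance of circle incidence together with the genericity bookkeeping implicit in the statement: one needs $\bar x_1,\bar x_2,\bar x_3$ distinct and non-collinear, the image lines and circles non-degenerate (and the four points on each original circle distinct), and finally $M\neq\infty$ so that $x_{123}$ is an honest finite point rather than being absorbed back into $x$. These conditions are exactly the locus excluded by the word \emph{generically}.
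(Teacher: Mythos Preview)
Your argument is correct. The reduction to an affine $3$-space via the circles $c_{ij}$, the inversion at $x$ turning those circles into three pairwise intersecting lines (hence a planar triangle), and the identification of the transformed circles $\tau_k c_{ij}$ with the Miquel circles of that triangle are all sound; the directed-angle chase is the standard proof of the planar pivot theorem. One small quibble: if $M=\infty$ after inversion, the pulled-back point is $x_{123}=x$, which is still a perfectly good point of $\RI^N$ and a common point of the three circles; this is simply one of the degenerate configurations swept under the word \emph{generically} (the eighth vertex coinciding with the first), not a failure of the argument.

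As for comparison with the paper: the paper does not prove this theorem at all. It is quoted as a classical result with a reference to \cite{BobenkoSuris:2008:DDGBook} for the proof. Your approach---M\"obius reduction to the planar Miquel pivot theorem followed by an angle chase---is the standard classical route and is entirely appropriate here; it has the virtue of making transparent why the result holds in arbitrary $\RI^N$ rather than just in the plane, which is exactly the form needed later for spherical cubes.
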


We call an elementary hexahedron of a circular net a \emph{spherical cube},
since all its vertices are contained in a 2-sphere which we call \emph{Miquel sphere}
(cf. Fig.~\ref{fig:spherical_cube}).

\begin{figure}[htb]
\centering
\includegraphics[scale=.18]{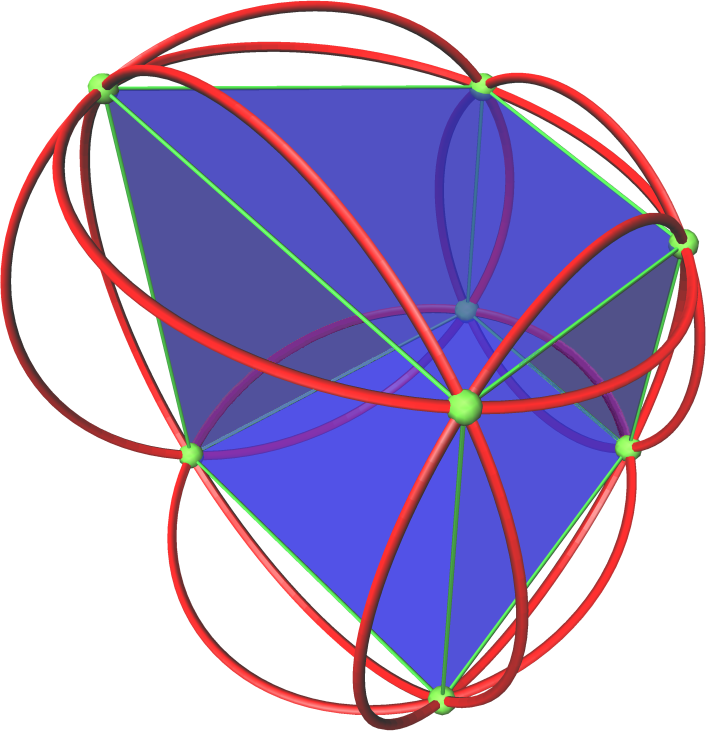}
\caption{
A spherical cube:
The eight vertices of a cube are contained in a 2-sphere
if and only if each face is a circular quadrilateral.
Any seven vertices determine the eighth one uniquely.}
\label{fig:spherical_cube}
\end{figure}

Cyclidic nets are defined as circular nets with an additional structure:

\begin{definition}[Cyclidic net]
A map
\[
(x,B) : \Z^m \to \RI^N \times \left\{ \text{orthonormal $m$-frames } B = (t^{(1)},\dots,t^{(m)}) \right\}, \quad m \le N
\]
is called an \emph{$m$D cyclidic net},
if
\begin{enumerate}[i)]
\item
The net $x$ is a circular net for which all quadrilaterals $(x,x_i,x_{ij},x_j)$ are embedded, $1\le i < j \le m$.
\item
The frames $B,B_i$ at neighboring vertices $x,x_i$ are related by
\begin{equation}
B_i = (H^{(i)} \circ F^{(i)}) (B) = (F^{(i)} \circ H^{(i)}) (B), \quad 1 \le i \le m,
\label{eq:frame_evolution}
\end{equation}
where $H^{(i)}$ is the reflection $\eqref{eq:bisecting_hyperplane_map}$ in the bisecting hyperplane \eqref{eq:bisecting_hyperplane} and
$F^{(i)}$ maps
$(t^{(1)},\dots,t^{(i)},\dots,t^{(m)})$ to $(t^{(1)},\dots,-t^{(i)},\dots,t^{(m)})$.
\end{enumerate}
\label{def:cyclidic_net}
\end{definition}

\begin{figure}[htb]
\centering
 \input{ 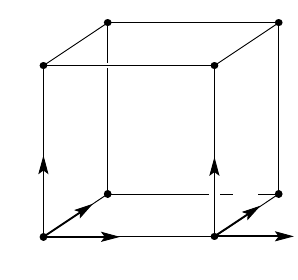_t } 
\caption{Notation for cyclidic nets.}
\label{fig:cycnet_notation}
\end{figure}

\paragraph{Extension of circular nets to cyclidic nets.}
\label{par:circular_to_cyclidic}

The idea is to choose an initial frame $B_0$ at $x_0$,
and then evolving this frame according to \eqref{eq:frame_evolution}.
This is always possible due to

\begin{proposition}
Given a circular net $x:\Z^m \to \R^N$ with all elementary quadrilaterals embedded
and an orthonormal $m$-frame $B_0$ in $x_0 = x(z_0)$,
there is a unique cyclidic net respecting this data.
\label{prop:frame_evolution_consistency}
\end{proposition}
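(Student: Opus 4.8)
The plan is to build the frame field by transporting the initial frame $B_0$ along edges of $\Z^m$ according to the evolution rule \eqref{eq:frame_evolution}, and then to show that the resulting frame at each vertex is independent of the chosen edge path. Since $\Z^m$ is simply connected and its $2$-cells are exactly the elementary quadrilaterals $(x,x_i,x_{ij},x_j)$, any two edge paths with common endpoints are related by a sequence of elementary moves across such quadrilaterals. Hence it suffices to prove \emph{local consistency}: transporting a frame $B$ from $x$ to $x_{ij}$ via $x_i$ produces the same frame as transporting it via $x_j$. Uniqueness is then immediate, because $B_0$ together with \eqref{eq:frame_evolution} determines the frame at every vertex along any path from $x_0$.

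To carry this out I would first record the two path compositions explicitly. Writing $F^{(k)}$ for the reversal of the $k$-th frame vector (a purely frame-theoretic operation, independent of the base vertex) and $H^{(i)},H_i^{(j)}$ for the reflections in the bisecting hyperplanes \eqref{eq:bisecting_hyperplane}, the rule \eqref{eq:frame_evolution} gives
\begin{align*}
B_{ij} &= H_i^{(j)} \circ F^{(j)} \circ H^{(i)} \circ F^{(i)} (B),\\
B_{ij}' &= H_j^{(i)} \circ F^{(i)} \circ H^{(j)} \circ F^{(j)} (B).
\end{align*}
The crucial structural remark is that all the sign reversals commute with one another and with every reflection: for $i\ne j$ one has $F^{(i)}\circ F^{(j)}=F^{(j)}\circ F^{(i)}$, and for any reflection $H$ one has $H\circ F^{(k)}=F^{(k)}\circ H$, since $H$ acts linearly and componentwise on the frame while $F^{(k)}$ merely flips the sign of the $k$-th vector. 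Pulling all flips to the right therefore turns both compositions into
\[
B_{ij} = \bigl(H_i^{(j)} \circ H^{(i)}\bigr)\circ F^{(i)} \circ F^{(j)}(B), \qquad
B_{ij}' = \bigl(H_j^{(i)} \circ H^{(j)}\bigr)\circ F^{(i)} \circ F^{(j)}(B),
\]
so the sign contributions of the two paths coincide, and it only remains to compare the two composite reflections.

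The final step is to invoke Lemma~\ref{lem:reflection_lemma}. Because $x$ is a circular net with embedded elementary quadrilaterals, the four vertices $x,x_i,x_{ij},x_j$ are distinct and concircular, so the lemma yields $H_i^{(j)} \circ H^{(i)} = H_j^{(i)} \circ H^{(j)}$; combined with the equality of the sign parts this gives $B_{ij}=B_{ij}'$, the required local consistency. Path-independence of transport then follows from simple connectivity, establishing existence, while uniqueness has already been noted. I expect the only delicate point to be the sign bookkeeping—verifying that both paths reverse the $i$-th and the $j$-th frame vectors exactly once each—which is precisely what the commutation argument above encodes; the genuinely geometric content (that the two compositions of hyperplane reflections agree) is supplied ready-made by Lemma~\ref{lem:reflection_lemma}, with embeddedness entering only to guarantee distinct concircular vertices so that the lemma applies.
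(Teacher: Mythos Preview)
Your argument is correct and follows exactly the approach of the paper, which simply states that consistency of \eqref{eq:frame_evolution} is an immediate consequence of Lemma~\ref{lem:reflection_lemma}; you have spelled out the sign bookkeeping and the path-independence reduction that the paper leaves implicit.
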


\begin{proof}
As an immediate consequence of Lemma~\ref{lem:reflection_lemma} the evolution \eqref{eq:frame_evolution} is consistent,
i.e. yields a unique frame at every vertex.
\end{proof}

Let $k = \sum_{i=1}^{m} N-i$ for $m \le N$.  As there is a $k$-parameter family
of orthonormal $m$-frames $B_0$ at $x_0 \in \R^N$, an $m$D circular net $x:\Z^m
\to \R^N$ can be extended to a $k$-parameter family of cyclidic nets.

\subsection{Geometry of cyclidic nets}
\label{subsec:cyclidic_nets_geometry}

Given the vertices of a cyclidic net,
the frames determine a unique cyclidic patch
for every elementary quadrilateral (cf. Theorem~\ref{thm:patches_for_concircular_points}).
The evolution of frames guarantees that patches associated to edge-adjacent quads
share a boundary arc,
and supposed all involved patches are non-singular the join of patches belonging to the same coordinate directions is $C^1$.
Nevertheless, since the frames are arbitrary, the involved patches may be singular.

Our main interest is the discretization of smooth geometries, thus in the
following we assume that all considered patches are non-singular.  In
particular, this implies that all vertex quadrilaterals have to be embedded (cf.
Fig.~\ref{fig:patch_singular_points}), which is already incorporated in the
Definition~\ref{def:cyclidic_net} of cyclidic nets.

\subsection*{Geometry of 2D cyclidic nets in $\R^3$}

\paragraph{2D cyclidic nets as $C^1$-surfaces.}
\label{par:2d_cycnets_as_c1}

Consider a 2D cyclidic net $(x,B)$ in $\R^3$.
Identify the orthonormal 2-frames $B = (t^{(1)},t^{(2)})$ with the
3-frames $(t^{(1)},t^{(2)},n)$, where $n = t^{(1)} \times t^{(2)}$.
According to Theorem~\ref{thm:patches_for_concircular_points},
the data $(x,x_1,x_{12},x_2)$ and $(t^{(1)},t^{(2)},n)$ at $x$ determines a unique cyclidic patch.
There are three more circular quads containing the vertex $x$.
Since all quads are embedded,
for each corresponding patch the vertex frames are related by \eqref{eq:vertex_frames_reflection}.
Comparison with \eqref{eq:frame_evolution} yields
that frames fit as shown in Fig.~\ref{fig:2d_cycnets_are_c1}.
This implies that patches sharing two vertices
also share the corresponding boundary sphere and boundary arc
(see Fig.~\ref{fig:principal_sphere} and recall that a circle is determined by two points and one tangent).
Since the boundary sphere is tangent to both patches along the common boundary arc
one obtains a piecewise smooth surface which is $C^1$ across the joints.
This consideration also shows that one should associate circular arcs and boundary spheres to the edges of $\Z^2$.

\begin{figure}[htb]
\centering
\includegraphics[scale=.25]{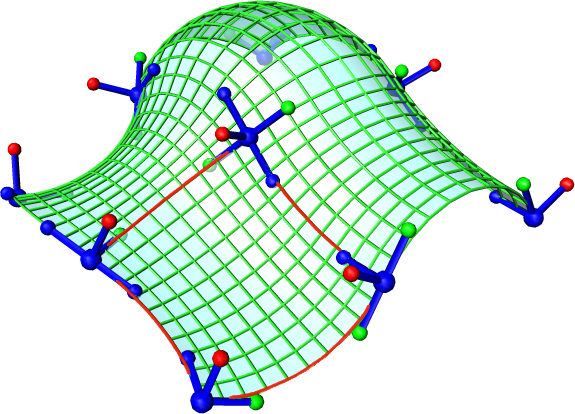}
\hspace{1.8cm}
\includegraphics[scale=.27]{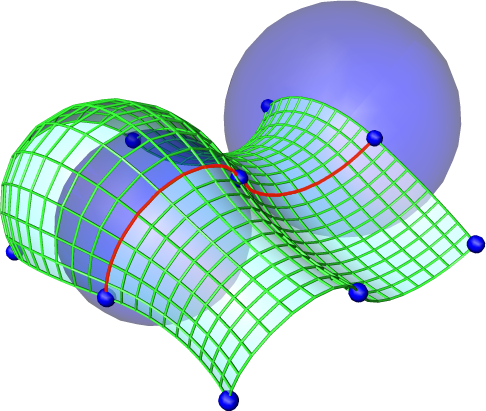}
\caption{A generic 2D cyclidic net is a $C^1$-surface.}
\label{fig:2d_cycnets_are_c1}
\end{figure}

\paragraph{Discrete curvature lines.}
\label{par:discrete_curv_lines}

For a 2D cyclidic net the boundary arcs of its patches can be
treated as discrete curvature lines. These arcs constitute a net
of $C^1$-curves with $\Z^2$ combinatorics on the surface, which
intersect orthogonally at vertices. The curves are determined by
the data $(x,B)$, without referring to cyclidic patches.

\paragraph{Relation to principal contact element nets.}

For a 2D cyclidic net $(x,B)$ with frames $B = (t^{(1)},t^{(2)})$ the
normals at patch vertices are given by $n = t^{(1)} \times t^{(2)}$.
Those normals induce a net $(x,n)$ of contact elements.
As explained in the proof of Theorem~\ref{thm:patches_for_concircular_points},
contact elements $(x,n),(x_i,n_i)$ share an oriented sphere (cf. Fig.~\ref{fig:principal_sphere}).
This is the defining property for a net of contact elements $(x,n)$
to constitute a so-called \emph{principal} contact element net.
Principal contact element nets are a discretization of curvature line parametrized surfaces in Lie geometry,
and they unify the previous discretizations
as circular nets in M\"obius geometry and as conical nets in Laguerre geometry.
For details and further references see \cite{BobenkoSuris:2008:DDGBook}.

Note that principal contact element nets discretize literally the
Lie geometric characterization of curvature lines, i.e.
infinitesimally close contact elements along a curvature line
share the corresponding principal curvature sphere.

\paragraph{Parallel 2D cyclidic nets.}

As any $C^1$-surface, a 2D cyclidic net $(x,B)$ possesses a family
of parallel surfaces. In particular, the offset of a 2D cyclidic
net is again a 2D cyclidic net. The corresponding discrete data is
obtained by offsetting the circular net $x \mapsto x + \varepsilon
n$ and keeping the frames $B=(t^{(1)},t^{(2)},n)$ (Indeed one
obtains another cyclidic net, since the offset surface possesses
the same reflections \eqref{eq:bisecting_hyperplane_map}).

\paragraph{Ribaucour transformation of 2D cyclidic nets.}

According to the ``multidimensional consistency principle'' for the discretization of smooth geometries,
the transformations of cyclidic nets are defined by the same geometric relations as the nets itself.
For a detailed explanation see \cite[Chap. 3]{BobenkoSuris:2008:DDGBook},
where Ribaucour transformations for principal contact element nets are introduced in the same spirit.

\begin{definition}[Ribaucour transformation of cyclidic nets]
Cyclidic nets
\[(x,B),(x^+,B^+) : \Z^2 \to \RI^3 \times \{\text{orthonormal 3-frames } B = (t^{(1)},t^{(2)},n)\}\]
are called \emph{Ribaucour transforms} of each other, if
for all $z \in \Z^2$ the corresponding frames $B$ and $B^+$ are related by
\begin{equation*}
B^+ = (H^{(+)} \circ F^{(+)}) (B) = (F^{(+)} \circ H^{(+)}) (B).
\end{equation*}
Here $H^{(+)}$ denotes the reflection in the perpendicular bisecting plane between $x$ and $x^+$,
and $F^{(+)}$ denotes the orientation change $(t^{(1)},t^{(2)},n) \mapsto (t^{(1)},t^{(2)},-n)$.
\label{def:ribaucour_trafo_cyclidic}
\end{definition}

The existence and standard properties including Bianchi's permutability property
can be derived exactly as for principal contact element nets.
Definition~\ref{def:ribaucour_trafo_cyclidic} means that two 2D cyclidic nets in the
relation of Ribaucour transformation build a 2-layer 3D cyclidic net.
For each pair of corresponding vertices $x$ and $x^+$ there is a sphere tangent to the
cyclidic net $(x,B)$ at $x$ and tangent to $(x^+,B^+)$ at $x^+$
(cf. Fig.~\ref{fig:principal_sphere}).

\subsection*{Geometry of 3D cyclidic nets in $\R^3$}

3D cyclidic nets in $\R^3$ are a discretization of triply orthogonal coordinate systems.
By Dupin's theorem,
all 2-dimensional coordinate surfaces of such a coordinate system are curvature line parametrized surfaces,
i.e. coordinate surfaces from different families intersect each other orthogonally along curvature lines.
The discretization as cyclidic nets preserves this property
in the sense that the 2D layers of a 3D cyclidic net
(seen as $C^1$-surfaces)
intersect orthogonally along the common previously discussed discrete curvature lines
(cf. p.~\pageref{par:discrete_curv_lines}).
Moreover, we will show that the natural discrete families of coordinate surfaces
of a 3D cyclidic net can be extended to piecewise smooth families of 2D cyclidic nets.

\paragraph{Elementary hexahedra of 3D cyclidic nets.}

We call an elementary hexahedron of a 3D cyclidic net a \emph{cyclidic cube}.
By definition of cyclidic nets,
the vertices of a cyclidic cube form a spherical cube,
i.e. they are contained in a 2-sphere.

Now consider a 3D cyclidic net $(x,B)$ in $\R^3$.
According to Theorem~\ref{thm:patches_for_concircular_points},
for each circular quadrilateral $(x,x_j,x_{jk},x_k)$
one has a cyclidic patch $f^{(i)}$ with these vertices and the normal $t^{(i)}$ in $x$.
Therefore we obtain six cyclidic patches as faces of a cyclidic cube:
The faces $f^{(i)},f^{(j)},f^{(k)}$ are all determined by the same frame $B$ at $x$,
while the shifted faces are determined by the shifted frames,
i.e. $f^{(i)}_i$ is determined by $B_i$ at $x_i$ etc.
(cf. Fig.~\ref{fig:patches_for_elem_quads} and Fig.~\ref{fig:cyclidic_cube}).

\begin{figure}[htb]
\centering
 \input{ 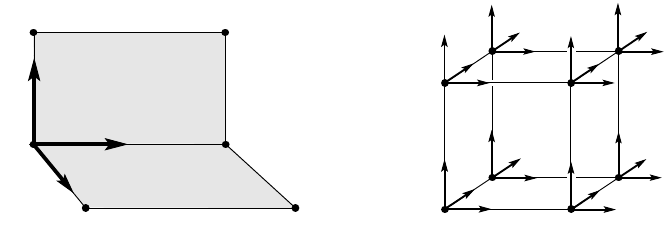_t } 
\caption{
    Left:   Notation for cyclidic patches associated to elementary quadrilaterals of a cyclidic net.
    Right: Combinatorics of frames of a cyclidic net, obtained by the evolution \eqref{eq:frame_evolution}.
    }
\label{fig:patches_for_elem_quads}
\end{figure}

\begin{proposition}[]
Adjacent faces of a cyclidic cube
intersect orthogonally along their common boundary arc.
\label{prop:patches_intersect_orthogonally_along_arcs}
\end{proposition}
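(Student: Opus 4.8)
The plan is to reduce orthogonality along the whole common arc to orthogonality at the single shared vertex, exploiting that each face is tangent to one principal curvature sphere all along the arc, together with the classical fact that two spheres meet at a constant angle along their circle of intersection.

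First I would fix two adjacent faces, say $f^{(i)}$ with vertices $(x,x_j,x_{jk},x_k)$ and $f^{(j)}$ with vertices $(x,x_k,x_{ki},x_i)$, which share the boundary arc $\widehat{x,x_k}$. By Proposition~\ref{prop:curvature_spheres_touch_along_circles} this arc is a curvature line of each supporting cyclide, contained in a unique enveloping (principal curvature) sphere; call these $s_i$ for $f^{(i)}$ and $s_j$ for $f^{(j)}$. Since a surface that touches a sphere along a curve shares its tangent plane with the sphere all along that curve, the unit normal of $f^{(i)}$ at a point $y$ of the arc equals the radial normal of $s_i$ at $y$, and likewise for $f^{(j)}$ and $s_j$. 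Hence the angle at which the two faces meet at $y$ equals the angle between the spheres $s_i$ and $s_j$ at $y$, and orthogonal intersection is equivalent to $\langle n^{(i)}(y),n^{(j)}(y)\rangle=0$.

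Next I would show this sphere-angle is constant along $\widehat{x,x_k}$. Both $s_i$ and $s_j$ contain the circle $c$ carrying the arc, so their centers lie on the axis of $c$; writing a point of the circle as $y=p+\rho u$ with $u$ a unit vector orthogonal to the unit axis direction $\nu$, and the centers as $p+a_i\nu$ and $p+a_j\nu$, the two radial normals are proportional to $\rho u-a_i\nu$ and $\rho u-a_j\nu$. Their inner product equals $\rho^2+a_ia_j$ up to a positive normalizing factor, which is manifestly independent of the running point on the arc. Thus the cosine of the intersection angle is constant along $\widehat{x,x_k}$.

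Finally I would evaluate at the common vertex $x$. There the normal of $f^{(i)}$ is $t^{(i)}$ and that of $f^{(j)}$ is $t^{(j)}$ (Theorem~\ref{thm:patches_for_concircular_points}), and these are orthogonal because $B=(t^{(1)},t^{(2)},t^{(3)})$ is an orthonormal frame (Definition~\ref{def:cyclidic_net}). Constancy then forces orthogonal intersection along the entire arc. The main obstacle I anticipate is the bookkeeping of degenerate configurations --- when a principal curvature sphere is a plane (the limit $a_i\to\infty$) or when the arc itself is a straight segment --- but these follow from the same constant-angle identity in the limit, in line with the continuity arguments for spherical patches developed in Section~\ref{subsec:cyclidic_patches}.
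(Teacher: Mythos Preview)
Your argument is essentially correct and follows a genuinely different route from the paper. The paper first verifies that adjacent faces share the same boundary arc (using that the common tangent $t^{(k)}$ at $x$ together with the endpoints $x,x_k$ determines the arc), and then propagates orthogonality from the vertex $x$ to an arbitrary interior point $\wtilde x_k$ of $\widehat{x,x_k}$ by invoking Proposition~\ref{prop:sub_patches} on subpatches: the vertex frames of the two subpatches at $\wtilde x_k$ are again orthonormal, hence the normals are orthogonal there. Your approach instead replaces each patch by its boundary curvature sphere $s_i,s_j$ along the arc and uses the elementary fact that two spheres through a common circle meet at constant angle; this is more direct and avoids the subpatch machinery entirely. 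Both arguments ultimately reduce to the orthogonality $\langle t^{(i)},t^{(j)}\rangle=0$ of the frame at $x$.

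One point you should make explicit: you take for granted that the two faces actually share the arc $\widehat{x,x_k}$, but this is not part of the hypotheses---the patches $f^{(i)}$ and $f^{(j)}$ are supported on different Dupin cyclides, and a priori each produces its own circular arc from $x$ to $x_k$. The paper devotes the first paragraph of its proof to this, and your constant-angle argument needs it too (otherwise $s_i$ and $s_j$ need not contain a common circle). The fix is short: both arcs have tangent $t^{(k)}$ at $x$ by construction of the vertex frames, and a circular arc is determined by its two endpoints and a tangent at one of them. With that sentence added, your proof is complete.
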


\begin{figure}[htb]
\centering
\includegraphics[scale=.25]{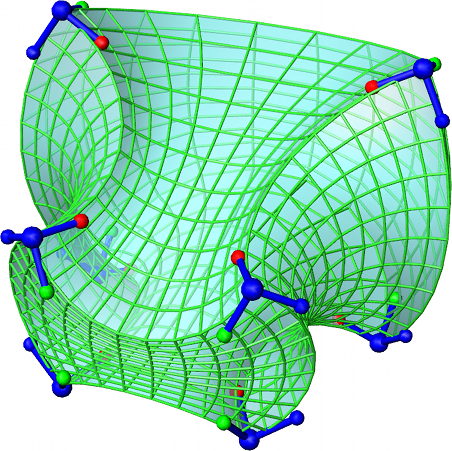} \hspace{1.7cm}
\includegraphics[scale=.25]{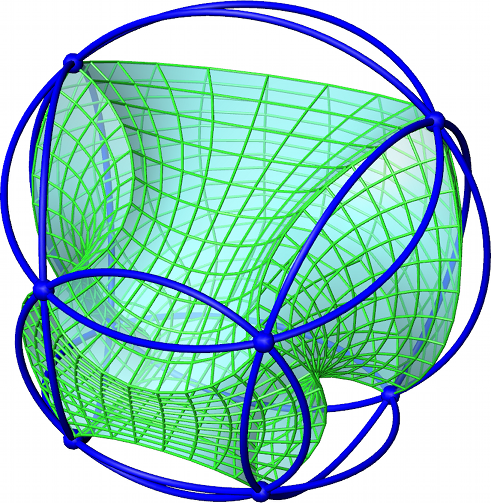}
\caption{A cyclidic cube.
The vertices are contained in a 2-sphere and adjacent faces intersect orthogonally.}
\label{fig:cyclidic_cube}
\end{figure}

\begin{proof}
First note that corresponding boundary arcs of adjacent patches coincide:
Since all quadrilaterals are embedded,
for each face the vertex frames are related by \eqref{eq:vertex_frames_reflection}.
On the other hand the evolution \eqref{eq:frame_evolution}
yields frames as in Fig.~\ref{fig:patches_for_elem_quads}.
For vertex frames of adjacent patches the tangents associated to the common edge coincide.
Since also the endpoints coincide, the corresponding boundary arcs coincide.

Orthogonal intersection in the common vertices
follows immediately from the orthogonality of the corresponding frames.
Knowing that corresponding boundary arcs of adjacent patches coincide,
orthogonal intersection along those arcs follows from Proposition~\ref{prop:sub_patches}
by considering vertex frames of subpatches with common endpoints
(for example, faces $f^{(i)}$ and $f^{(j)}$ intersect orthogonally
in each point $\wtilde x_k$ of the interior of $\what{x,x_k}$,
cf. Fig.~\ref{fig:orthogonally_intersecting_patches}).
\begin{figure}[htb]
\centering
 \input{ 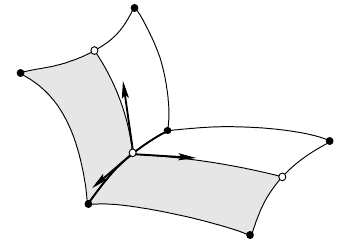_t } 
\caption{Adjacent cyclidic patches associated to different coordinate directions in a 3D cyclidic net
intersect orthogonally along their common boundary arc.}
\label{fig:orthogonally_intersecting_patches}
\end{figure}
\end{proof}

\begin{proposition}[]
For a spherical cube there is a 3-parameter family of cyclidic cubes with same vertices.
These cyclidic cubes are in bijection with orthonormal 3-frames at one vertex.
\label{prop:miquel_to_cyclidic}
\end{proposition}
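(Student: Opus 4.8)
The plan is to recognize a spherical cube as a circular net on the single elementary cube $\{0,1\}^3 \subset \Z^3$ and then to invoke Proposition~\ref{prop:frame_evolution_consistency}. By definition a spherical cube is an elementary hexahedron of a circular net, i.e. eight points with each of its six faces an (embedded) circular quadrilateral; this is precisely a circular net indexed by $\{0,1\}^3$. A cyclidic cube with these fixed vertices is nothing but a cyclidic-net structure on this single cube, that is, an assignment of orthonormal $3$-frames to the eight vertices obeying the frame evolution \eqref{eq:frame_evolution} along each of the twelve edges. Thus the vertices being given, the only freedom lies in the frames.

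Next I would fix a vertex $x_0$ together with an arbitrary orthonormal $3$-frame $B_0$ and propagate it by \eqref{eq:frame_evolution}. Around each face the propagation is path-independent by Lemma~\ref{lem:reflection_lemma} (applied to the four concircular vertices of that face, the composed orientation reversals $F^{(i)}$ cancelling in pairs), and since the cube is simply connected any two edge-paths joining two of its vertices differ by a sequence of elementary face moves. Hence the frame obtained at each vertex --- in particular at the antipodal vertex $x_{123}$, reachable through each of the three faces carrying $x_{12},x_{13},x_{23}$ --- is well defined, and $(x_0,B_0)$ determines a unique cyclidic cube. This is exactly the content of Proposition~\ref{prop:frame_evolution_consistency} specialized to $\{0,1\}^3$.

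Finally I would establish the bijection and the parameter count. The assignment $B_0 \mapsto (\text{cyclidic cube})$ is surjective onto cyclidic cubes with the given vertices, since every such cube carries a frame at $x_0$, and it is injective because that frame is read off from the cube and, by the previous step, reconstructs all the others. The set of orthonormal $3$-frames at $x_0 \in \R^3$ is a $3$-dimensional manifold (a copy of $O(3)$), which yields the claimed $3$-parameter family. The only genuine work sits in the consistency of the evolution over the whole $3$-cube rather than over a single face; this path-independence at $x_{123}$ is the step one must be careful about, and it reduces --- via the simple connectivity of the cube --- to the two-dimensional identity of Lemma~\ref{lem:reflection_lemma} already incorporated in Proposition~\ref{prop:frame_evolution_consistency}.
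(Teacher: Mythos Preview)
Your proposal is correct and follows exactly the paper's approach: the paper's proof consists of the single sentence ``This is a special instance of the extension of circular nets to cyclidic nets'', i.e.\ it too invokes Proposition~\ref{prop:frame_evolution_consistency} for the circular net on $\{0,1\}^3$. Your version simply spells out in more detail the consistency argument (via Lemma~\ref{lem:reflection_lemma} and simple connectivity of the cube) and the parameter count that the paper leaves implicit in the paragraph following Proposition~\ref{prop:frame_evolution_consistency}.
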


\begin{proof}
This is a special instance of the extension of circular nets to cyclidic nets
(cf. p.~\pageref{par:circular_to_cyclidic}).
\end{proof}

\begin{corollary}
Three cyclidic patches sharing one vertex and boundary arcs with combinatorics as in
Fig.~\ref{fig:three_faces_combinatorially},
can be extended uniquely to a cyclidic cube.
\label{cor:three_face_determine_cube}
\end{corollary}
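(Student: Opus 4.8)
The plan is to reduce the statement to the combination of the Miquel theorem (Theorem~\ref{thm:miquel}) and the extension result for spherical cubes (Proposition~\ref{prop:miquel_to_cyclidic}). The two ingredients a cyclidic cube is built from are a frame at one vertex and the vertices of a spherical cube; I would extract both from the three given patches and then invoke uniqueness.

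Label the three patches as the faces $f^{(1)},f^{(2)},f^{(3)}$ meeting at the common vertex $x$, so that $f^{(i)}$ carries the circular quadrilateral with vertices $x,x_j,x_{jk},x_k$ for $\{i,j,k\}=\{1,2,3\}$. By hypothesis the patches pairwise share boundary arcs, so the three arcs $\widehat{x,x_1},\widehat{x,x_2},\widehat{x,x_3}$ are unambiguously defined and determine tangent vectors $t^{(1)},t^{(2)},t^{(3)}$ at $x$. Within each single patch the two boundary arcs issuing from $x$ meet orthogonally (the patches are non-singular), hence these three tangents are pairwise orthogonal; with the orientation prescribed by Fig.~\ref{fig:three_faces_combinatorially} they assemble into one orthonormal frame $B=(t^{(1)},t^{(2)},t^{(3)})$ at $x$. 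Note that the normal of each face $f^{(i)}$ at $x$ is then forced to equal $t^{(i)}$, being the unique unit vector orthogonal to the two tangents spanning that face.

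Collecting the vertices of the three patches yields the seven points $x,x_1,x_2,x_3,x_{12},x_{13},x_{23}$, and the vertices of each patch are concircular (Proposition~\ref{prop:geometric_properties_of_cyclidic_patches}), so all three quadruples $(x,x_i,x_{ij},x_j)$ lie on circles. This is exactly the hypothesis of the Miquel theorem, which produces a unique eighth point $x_{123}$ completing a spherical cube. Feeding this spherical cube together with the frame $B$ into Proposition~\ref{prop:miquel_to_cyclidic} --- equivalently, evolving $B$ by \eqref{eq:frame_evolution} as in Proposition~\ref{prop:frame_evolution_consistency} --- yields a unique cyclidic cube.

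It remains to verify that this cyclidic cube genuinely extends the given data, i.e. that its three faces through $x$ are the original patches. By Theorem~\ref{thm:patches_for_concircular_points} every face of the cube is the \emph{unique} cyclidic patch determined by its four concircular vertices and the frame induced at $x$; for the face opposite the $i$-th direction this data is precisely the vertex quadruple $(x,x_j,x_{jk},x_k)$ together with the frame built from $t^{(j)},t^{(k)},t^{(i)}$, which is exactly the data carried by the given patch $f^{(i)}$. Hence the faces coincide, and uniqueness of the extension is inherited from Proposition~\ref{prop:miquel_to_cyclidic}. The step requiring genuine care is the assembly of $B$ in the second paragraph: one must check that the pairwise-shared tangents are consistently oriented, so that the three face normals $t^{(i)}$ fit together into a single oriented frame --- this is where the combinatorics of Fig.~\ref{fig:three_faces_combinatorially} enters.
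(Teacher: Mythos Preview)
Your proof is correct and follows the same route as the paper: extract the frame at the common vertex from the three patches, complete the seven vertices to a spherical cube via the Miquel theorem, and then invoke Proposition~\ref{prop:miquel_to_cyclidic}. The paper's version is much terser (it simply asserts that ``the three patches determine the frame at the common vertex'' and cites Miquel and Proposition~\ref{prop:miquel_to_cyclidic}), whereas you spell out how the frame is assembled and add the verification that the resulting cube's faces agree with the given patches---useful detail, but no new ideas.
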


\begin{figure}[htb]
\centering
 \input{ 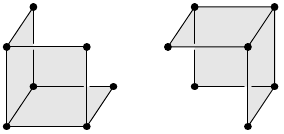_t } 
\caption{Three faces determine a cyclidic cube.}
\label{fig:three_faces_combinatorially}
\end{figure}

\begin{proof}
Seven vertices of a spherical cube are already given and by
Theorem~\ref{thm:miquel} there is the unique eighth vertex.
Since the three patches determine the frame at the common vertex,
Proposition~\ref{prop:miquel_to_cyclidic} proves the claim.
\end{proof}

\paragraph{Discrete triply orthogonal coordinate system.}

The restriction of a 3D cyclidic net to a coordinate plane of $\Z^3$ is a 2D cyclidic net,
which we see as a $C^1$-surface (cf. p.~\pageref{par:2d_cycnets_as_c1}).
We obtain three discrete families of coordinate surfaces
\begin{equation*}
M^{(i)} = \left\{
    (x,B)\big|_{E_i(c)} \mid c \in \Z \right\},
\quad
E_i(c) = \left\{ (z_1,z_2,z_3) \in \Z^3 \mid z_i = c \right\}.
\end{equation*}
Three such coordinate surfaces $m^{(i)} = (x,B)\big|_{E_i(c_i)}, i=1,2,3$,
share the vertex $x(c_1,c_2,c_3)$.
By Proposition~\ref{prop:patches_intersect_orthogonally_along_arcs}
any two surfaces $m^{(i)},m^{(j)}$ intersect orthogonally in a common discrete curvature line
(cf. p.~\pageref{par:discrete_curv_lines}),
in particular all three surfaces intersect orthogonally in $x$
(cf. Fig.~\ref{fig:octrihedron}).
These intersection curves are considered as coordinate lines of the discrete triply orthogonal coordinate system.

\paragraph{Extension to piecewise smooth coordinate systems.}

A \emph{non-singular cyclidic cube} is a cyclidic cube
for which opposite faces are disjoint and adjacent faces intersect only along the common boundary arc.
If all elementary hexahedra of a 3D cyclidic net are non-singular,
then the discrete families $M^{(i)}$ of coordinate surfaces
can be extended to piecewise smooth families $\mathcal{M}^{(i)}$
such that the previously described orthogonality property still holds.

\begin{theorem}[]
Let $f^{(i)},f_i^{(i)}$ be a pair of opposite faces of a non-singular cyclidic cube.
For each point $\wtilde x_i \in \what{x,x_i}$
there is a unique cyclidic patch $\wtilde f^{(i)}$ such that
$\wtilde x_i$ is a vertex of $\wtilde f^{(i)}$ and
$\wtilde f^{(i)}$ intersects the four faces
$f^{(j)},f_j^{(j)},f^{(k)},f_k^{(k)}$ orthogonally along common curvature lines
(cf. Fig.~\ref{fig:patch_in_between}).
Moreover, the patch $\wtilde f^{(i)}$ is non-singular.
\label{thm:patch_in_between}
\end{theorem}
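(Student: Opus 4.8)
The plan is to construct $\wtilde f^{(i)}$ as a subpatch ``in between'' the opposite faces $f^{(i)}$ and $f_i^{(i)}$, using the point $\wtilde x_i \in \what{x,x_i}$ to select it, and then to verify both the orthogonal intersection with the four side faces and non-singularity. First I would use that $\wtilde x_i$ lies on the common boundary arc $\what{x,x_i}$, which by Proposition~\ref{prop:patches_intersect_orthogonally_along_arcs} is shared by the two side faces $f^{(j)}$ and $f^{(k)}$ meeting along that edge. At $\wtilde x_i$ the frames of those two subpatches (obtained from Proposition~\ref{prop:sub_patches}) determine two tangent directions and a normal; together with the tangent to $\what{x,x_i}$ they assemble into an orthonormal $3$-frame $\wtilde B_i$ at $\wtilde x_i$. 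The idea is that this frame, together with a circular quadrilateral of vertices obtained by running the arc $\what{x,x_i}$ across to the opposite edge $\what{x_j,x_{jk}}$ (equivalently $\what{x_k,x_{jk}}$), pins down a unique cyclidic patch by Theorem~\ref{thm:patches_for_concircular_points}.

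The key steps, in order, are as follows. Begin with a curvature line parametrization of the cyclidic cube's faces (cf. the algorithm on pp.~\pageref{subsub:curvature_line_param_patches}), so that fixing the parameter corresponding to the $i$-direction at the value $u_0$ with $\wtilde x_i = f^{(j)}(u_0,0)$ cuts out a one-parameter slice. Next I would identify the four vertices $(\wtilde x_i, \wtilde x_{ij}, \wtilde x_{ijk}, \wtilde x_{ik})$ of the candidate patch $\wtilde f^{(i)}$ as the images of $\wtilde x_i$ under the subpatch constructions on the four side faces $f^{(j)},f_j^{(j)},f^{(k)},f_k^{(k)}$; non-singularity of the cube guarantees these are four distinct non-singular points. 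By Proposition~\ref{prop:sub_patches} these four points are concircular and their vertex frames obey \eqref{eq:vertex_frames_reflection}, so by Proposition~\ref{prop:frame_evolution_consistency} (consistency of frame evolution, i.e. Lemma~\ref{lem:reflection_lemma}) the frame $\wtilde B_i$ propagates consistently around the quadrilateral. Then Theorem~\ref{thm:patches_for_concircular_points} yields a \emph{unique} cyclidic patch $\wtilde f^{(i)}$ with these vertices and frame, which is exactly the uniqueness claimed.

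To establish orthogonal intersection along common curvature lines, I would argue as in the proof of Proposition~\ref{prop:patches_intersect_orthogonally_along_arcs}: the shared boundary arcs of $\wtilde f^{(i)}$ with each side face coincide because the tangents associated to the common edges agree (they come from the same slice of the side-face parametrization), and orthogonality along each such arc follows from Proposition~\ref{prop:sub_patches} applied to subpatches sharing those endpoints, since the vertex frames are orthonormal and related by reflections that preserve orthogonality. Finally, non-singularity of $\wtilde f^{(i)}$ follows from the non-singularity hypothesis on the cube: the four vertices are non-singular and distinct, the boundary arcs meet only at vertices, and by the characterization in Section~\ref{subsec:cyclidic_patches} this forces the patch to contain no singular points.

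The main obstacle I anticipate is the \textbf{well-definedness and coherence of the vertex quadrilateral $(\wtilde x_i, \wtilde x_{ij}, \wtilde x_{ijk}, \wtilde x_{ik})$} — specifically, checking that cutting the four side faces at the ``same'' level really produces a single closed circular quadrilateral whose frame evolution closes up. This is where one must carefully invoke Miquel's theorem (Theorem~\ref{thm:miquel}) for the induced subconfiguration and the cancellation of orientation reversals in \eqref{eq:vertex_frames_reflection_nonembedded}, exactly as in the consistency argument of Proposition~\ref{prop:frame_evolution_consistency}; the spherical degenerate case would then be handled by the same limiting argument $B_\varepsilon \to B$ used in the spherical case of Theorem~\ref{thm:patches_for_concircular_points} and Proposition~\ref{prop:sub_patches}.
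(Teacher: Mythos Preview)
Your overall plan---construct the four candidate vertices by following curvature lines on the side faces, show they form a circular quadrilateral with a consistent frame, then invoke Theorem~\ref{thm:patches_for_concircular_points}---matches the paper's approach. You also correctly isolate the main obstacle: the \emph{closure} of the quadrilateral, i.e.\ that the two paths from $\wtilde x_i$ around the side faces land on the \emph{same} point of the back edge $a=\what{x_{jk},x_{ijk}}$. However, your proposed resolution of this obstacle has a genuine gap.

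The tools you name do not do the job. Lemma~\ref{lem:reflection_lemma} and Proposition~\ref{prop:frame_evolution_consistency} concern consistency of \emph{frames} around a \emph{given} circular quadrilateral; they say nothing about whether the two candidate fourth vertices $\bar x_{ijk}$ and $\bar x_{ikj}$ coincide. Likewise, your appeal to Proposition~\ref{prop:sub_patches} for concircularity of $(\wtilde x_i,\wtilde x_{ij},\wtilde x_{ijk},\wtilde x_{ik})$ is premature: that proposition applies to a subpatch of a \emph{single} cyclidic patch, whereas here the four points come from four different faces, and concircularity is exactly what is at stake. Invoking Miquel's theorem ``for the induced subconfiguration'' is on the right track but as stated is too vague: Miquel gives an eighth point $\wtilde x_{ijk}$ of a spherical cube on the seven points $(x,\wtilde x_i,x_j,x_k,\wtilde x_{ij},\wtilde x_{ik},x_{jk})$, but it does not by itself show that the curvature-line images $\bar x_{ijk},\bar x_{ikj}$ equal this point.

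The paper's missing ingredient is a \emph{Miquel sphere} contradiction. Both $\bar x_{ijk}$ and $\bar x_{ikj}$ lie on the Miquel sphere $\wtilde s$ through $(x,\wtilde x_i,x_j,x_k)$, because each is concircular (via Proposition~\ref{prop:sub_patches} applied to a single side face) with three points already on $\wtilde s$. They also lie on the arc $a$. If $\bar x_{ijk}\ne\bar x_{ikj}$, then $a$ meets $\wtilde s$ in three distinct points, so $a\subset\wtilde s$, hence $x_{ijk}\in\wtilde s$; since $x_{ijk}$ also lies on the original Miquel sphere $s$ (and $s,\wtilde s$ already share the circle through $x,x_j,x_k,x_{jk}$), this forces $s=\wtilde s$ and thus $a\subset s$. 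So closure holds in the generic case $a\not\subset s$. The non-generic case is then handled by continuity---but note the relevant degeneracy is $a\subset s$ (equivalently $t^{(i)}$ tangent to $s$), \emph{not} the spherical-patch degeneracy you propose to treat with $B_\varepsilon\to B$; the correct perturbation is a small rotation of the frame $B$ at $x$ making $t^{(i)}$ non-tangent to $s$, keeping the vertices of the cube fixed.

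Finally, for non-singularity the cleanest argument is that opposite boundary arcs of $\wtilde f^{(i)}$ lie on opposite side faces of the cube, which are disjoint by hypothesis; hence opposite boundary curves of $\wtilde f^{(i)}$ do not meet, which is exactly the non-singularity criterion.
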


\begin{figure}[htb]
\centering
 \input{ 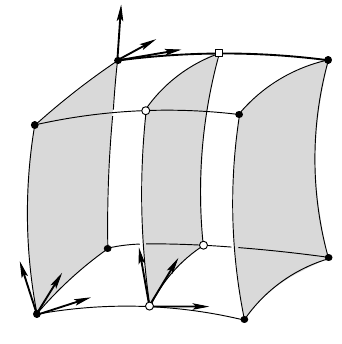_t } 
\caption{Choosing $\wtilde x_i \in \what{x,x_i}$ determines the patch $\wtilde f^{(i)}$.
Boundary arcs of $\wtilde f^{(i)}$ are curvature lines of the (orthogonal) faces
$f^{(j)},f^{(k)},f_j^{(j)},$ and $f_k^{(k)}$.}
\label{fig:patch_in_between}
\end{figure}

\begin{proof}
Given a cyclidic cube and a point $\wtilde x_i \in \what{x,x_i}$,
following curvature lines of $f^{(k)}$ and $f^{(j)}$
one obtains points $\wtilde x_{ij} \in \what{x_j,x_{ij}}$ and
$\wtilde x_{ik} \in \what{x_k,x_{ik}}$ (cf. Fig.~\ref{fig:patch_in_between}).
Take the obtained points $\wtilde x_{ij},\wtilde x_{ik}$ and go along curvature lines
of the patches $f^{(j)}_j$, respectively $f^{(k)}_k$,
which gives $\bar x_{ijk},\bar x_{ikj} \in a := \what{x_{jk},x_{ijk}}$.
We will prove $\bar x_{ijk} = \bar x_{ikj} = \wtilde x_{ijk}$,
where $\wtilde x_{ijk}$ is the eighth vertex of the spherical cube with
remaining vertices $(x,\wtilde x_i,x_j,x_k,\wtilde x_{ij},\wtilde x_{ik},x_{jk})$.
According to Propositions~\ref{prop:sub_patches} and \ref{prop:miquel_to_cyclidic}
this implies the existence of a unique patch $\wtilde f^{(i)}$ as claimed.

Denote by $\wtilde s$ the Miquel sphere (see Theorem~\ref{thm:miquel}) determined
by the vertices $(x,\wtilde x_i,x_j,x_k)$, and by $s$ the Miquel sphere determined by
$(x,x_i,x_j,x_k)$.

Suppose $\bar x_{ijk} \ne \bar x_{ikj}$.
In this case $a \subset \wtilde s$,
as we have three distinct points $x_{jk}, \bar x_{ijk}, \bar x_{ikj} \in a \cap \wtilde s$,
which in turn gives $x_{ijk} \in \wtilde s$.
On the other hand $x_{ijk}$ is also contained in the Miquel sphere $s$.
We have the chain of implications
\begin{equation*}
\bar x_{ijk} \ne \bar x_{ikj}
\, \implies \, a \subset \wtilde s
\, \implies \, x_{ijk} \in \wtilde s
\, \implies \, s = \wtilde s
\, \implies \, a \subset s.
\end{equation*}
This proves the claim $\wtilde x_{ijk} = \bar x_{ijk} = \bar x_{jik}$
in the generic case $a \not \subset s$.

The general validity of $\bar x_{ijk} = \bar x_{ikj}$ can be shown using a continuity argument.
The crucial observation is
that $s$ is fixed by the vertices $(x,\dots,x_{ijk})$ of the initial cyclidic cube,
whereas the arc $a$ depends on the frame $B_{jk}=(t_{jk}^{(1)},t_{jk}^{(2)},t_{jk}^{(3)})$ at $x_{jk}$
(cf. Fig.~\ref{fig:patch_in_between}).
The vector $t_{jk}^{(i)}$ is tangent to $s$ if and only if $t^{(i)}$ is tangent to $s$,
due to the evolution \eqref{eq:frame_evolution}.
Now consider a frame $B$ at $x$ with $t^{(i)}$ tangent to $s$,
and as before choose $\wtilde x_i \in \what{x,x_i}$.
Obviously there exists an axis $l$ through $x$
such that rotating $B$ by an arbitrary small angle $\alpha \in (-\varepsilon,\varepsilon) \setminus \{0\}$ around $l$
has the effect that the image $t^{(i)}(\alpha)$ of $t^{(i)} = t^{(i)}(0)$ does not remain tangent to $s$.
Denote by $\what{x,x_i}(\alpha)$ the circular arc between $x$ and $x_i$ determined by the vector $t^{(i)}(\alpha)$
and choose $\wtilde x_i (\alpha) \in \what{x,x_i}(\alpha)$ s.t. $\wtilde x_i(0) = \wtilde x_i$.
If $\wtilde x_i(\alpha)$ is continuous in $\alpha$,
the dependent points $\bar x_{ijk}(\alpha),\bar x_{ikj}(\alpha)$ are continuous in $\alpha$ as well.
For all angles $\alpha \in (-\varepsilon,\varepsilon)\setminus\left\{ 0 \right\}$ we are in the generic case,
for which we know $\bar x_{ijk}(\alpha) = \bar x_{ikj}(\alpha)$.
Thus continuity implies $\bar x_{ijk}(\alpha) = \bar x_{ikj}(\alpha)$ in the limit $\alpha = 0$.

A cyclidic patch is non-singular if and only if opposite
boundary curves do not intersect (see Fig.~\ref{fig:patch_singular_points}).
Since we started with a non-singular cyclidic cube, the patch $\wtilde f^{(i)}$ is non-singular.
\end{proof}

\begin{corollary}[]
Each pair $f^{(i)},f_i^{(i)}$ of opposite faces of a non-singular cyclidic cube
can be uniquely extended to a smooth family $\mathcal{F}^{(i)}$ of cyclidic patches
such that boundary curves of $\wtilde f^{(i)} \in \mathcal{F}^{(i)}$
are curvature lines of the initial faces
$f^{(j)},f_j^{(j)},f^{(k)},f_k^{(k)}$
(cf. Fig.~\ref{fig:patch_in_between}).
\label{cor:extension_to_smooth_families}
\end{corollary}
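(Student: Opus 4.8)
The plan is to realize $\mathcal{F}^{(i)}$ directly as the one-parameter family of patches furnished by Theorem~\ref{thm:patch_in_between}, letting the base point $\wtilde x_i$ run over the boundary arc $\what{x,x_i}$. Concretely I would set $\mathcal{F}^{(i)} := \{\,\wtilde f^{(i)} \mid \wtilde x_i \in \what{x,x_i}\,\}$. Both existence of each member and uniqueness of the family then come for free: Theorem~\ref{thm:patch_in_between} produces, for every $\wtilde x_i$, exactly one cyclidic patch meeting the four fixed faces $f^{(j)},f_j^{(j)},f^{(k)},f_k^{(k)}$ orthogonally along common curvature lines, and any patch whose boundary curves are curvature lines of those faces is of this form. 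Hence the only content beyond Theorem~\ref{thm:patch_in_between} is to show (a) that the two extreme parameter values recover the prescribed opposite faces, and (b) that the family depends smoothly on $\wtilde x_i$.

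For (a) I would evaluate the construction at the two endpoints of the arc. Setting $\wtilde x_i = x$, the curvature-line construction in the proof of Theorem~\ref{thm:patch_in_between} forces $\wtilde x_{ij} = x_j$, $\wtilde x_{ik} = x_k$ and $\wtilde x_{ijk} = x_{jk}$, so $\wtilde f^{(i)}$ has vertex quadrilateral $(x,x_j,x_{jk},x_k)$ and normal $t^{(i)}$ at $x$; by the uniqueness in Theorem~\ref{thm:patches_for_concircular_points} this is exactly $f^{(i)}$. The symmetric evaluation at $\wtilde x_i = x_i$ returns $f_i^{(i)}$, so $\mathcal{F}^{(i)}$ genuinely interpolates the given pair of opposite faces.

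For (b) I would track the dependence of a patch on its base point through the explicit formulas of Section~\ref{sec:dupin_cyclides}. The point $\wtilde x_i$ sweeps $\what{x,x_i}$ smoothly, in fact rationally, via the conic parametrization \eqref{eq:parametrization_of_conics}; the dependent vertices $\wtilde x_{ij},\wtilde x_{ik},\wtilde x_{ijk}$ are obtained by following curvature lines of the fixed faces and by the Miquel construction of Theorem~\ref{thm:miquel}, both smooth operations on the moving data, while the vertex frame at $\wtilde x_i$ is transported by the reflections \eqref{eq:bisecting_hyperplane_map}, which depend smoothly on the vertices. Substituting this smoothly-varying data into the Lie coordinates of Proposition~\ref{prop:data_for_generic_patches} and then into the curvature line parametrization of Theorem~\ref{thm:parametrization_of_cyclides} exhibits the point map of $\wtilde f^{(i)}$ as a smooth function of $\wtilde x_i$ jointly with the patch parameters. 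Since Theorem~\ref{thm:patch_in_between} also guarantees that each $\wtilde f^{(i)}$ is non-singular, no member degenerates and $\mathcal{F}^{(i)}$ is a smooth family.

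The step I expect to be the main obstacle is (b), specifically the behaviour where the generic-versus-spherical dichotomy of Theorem~\ref{thm:patches_for_concircular_points} is crossed, i.e.\ at parameter values for which the normal line at $\wtilde x_i$ meets (or is parallel to) the axis of the vertex circle and $\wtilde f^{(i)}$ degenerates to a spherical patch. There the rational expressions of Proposition~\ref{prop:data_for_generic_patches} pass through a coordinate singularity, so pointwise smoothness of the formulas is not automatic. To close this gap I would invoke the limiting argument for spherical patches (cf.\ p.~\pageref{par:spherical_patches}ff, the spherical case of Theorem~\ref{thm:patches_for_concircular_points}, and Proposition~\ref{prop:sub_patches}), which shows that the family of generic patches extends smoothly, and not merely continuously, across these configurations.
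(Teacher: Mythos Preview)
The paper gives no proof of this corollary at all; it is stated immediately after Theorem~\ref{thm:patch_in_between} and treated as a direct consequence. Your argument is therefore considerably more detailed than what the paper offers, and the overall strategy---parametrize $\mathcal{F}^{(i)}$ by $\wtilde x_i \in \what{x,x_i}$, invoke Theorem~\ref{thm:patch_in_between} for existence and uniqueness of each member, check the endpoints, and trace smoothness through the explicit formulas---is exactly the natural one and is correct.

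One remark on the obstacle you flag in~(b). You are right that the rational expressions in Proposition~\ref{prop:data_for_generic_patches} have coordinate singularities at spherical configurations, and this is a genuine technical point the paper simply does not address. However, your proposed fix slightly overstates what the paper's limiting argument provides: the discussion of spherical patches on p.~\pageref{par:spherical_patches} and in the spherical cases of Theorem~\ref{thm:patches_for_concircular_points} and Proposition~\ref{prop:sub_patches} establishes only \emph{continuity} of the limit, not smoothness. If you want to close the gap rigorously you should argue directly in homogeneous Lie coordinates: the curvature spheres $s^{(i)}(t_i)$ of the side faces vary polynomially in the arc parameter by \eqref{eq:parametrization_of_cyclidic_family}, the vertices $\wtilde x_i,\wtilde x_{ij},\wtilde x_{ik},\wtilde x_{ijk}$ are their contact points and hence rational without poles by \eqref{eq:parametrization_of_cyclides}, and the supporting planes $P^{(j)},P^{(k)} \in \cycplanes$ of $\wtilde f^{(i)}$ are cut out by linear equations in these data. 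The apparent singularity is then only in the affine normalization \eqref{eq:representative_sphere}, not in the underlying projective object, so the family is smooth throughout.
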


\begin{remark}
Corollary \ref{cor:extension_to_smooth_families} implies that opposite faces of
a cyclidic cube form a classical Ribaucour pair. In particular, for a Ribaucour
pair of 2D cyclidic nets corresponding patches form classical Ribaucour pairs.
\end{remark}

Moreover one has

\begin{corollary}[]
Through any interior point of a non-singular cyclidic cube there pass
exactly three (pairwise orthogonal) patches $\wtilde f^{(i)} \in \mathcal{F}^{(i)},\, i=1,2,3$
(cf. Corollary~\ref{cor:extension_to_smooth_families}).
\label{cor:orthogonal_coordinates}
\end{corollary}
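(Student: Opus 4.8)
The plan is to show that the three families $\mathcal{F}^{(1)},\mathcal{F}^{(2)},\mathcal{F}^{(3)}$ constitute a curvilinear coordinate system on the solid cyclidic cube, so that existence, uniqueness, and orthogonality of the patches through an interior point $p$ all follow at once.

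First I would establish that each family foliates the cube. Parametrize $\mathcal{F}^{(i)}$ by $u_i\in[0,1]$, where $u_i$ records the position of the vertex $\wtilde x_i$ along the edge arc $\what{x,x_i}$, so that $u_i=0$ gives the face $f^{(i)}$ and $u_i=1$ the opposite face $f_i^{(i)}$. By Corollary~\ref{cor:extension_to_smooth_families} the patch $\wtilde f^{(i)}(u_i)$ depends smoothly on $u_i$, and on each of the four faces $f^{(j)},f_j^{(j)},f^{(k)},f_k^{(k)}$ its boundary arc is a curvature line; as $u_i$ runs through $[0,1]$ these arcs sweep out a one-parameter family of curvature lines foliating those (non-singular) faces. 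Since the cube is non-singular, opposite faces are disjoint and the intermediate patches cannot meet one another, so $u_i\mapsto\wtilde f^{(i)}(u_i)$ is a foliation of the closed cube and through each interior point $p$ passes exactly one patch of $\mathcal{F}^{(i)}$. Applying this for $i=1,2,3$ yields exactly three patches $\wtilde f^{(1)},\wtilde f^{(2)},\wtilde f^{(3)}$ through $p$, settling existence and uniqueness.

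For orthogonality I would use a reduction to a sub-cube. Cutting the cyclidic cube along $\wtilde f^{(i)}$ produces two elementary hexahedra whose faces are $f^{(i)}$ (resp. $f_i^{(i)}$), the slice $\wtilde f^{(i)}$ itself, and four subpatches of $f^{(j)},f_j^{(j)},f^{(k)},f_k^{(k)}$ cut off along the boundary arcs of $\wtilde f^{(i)}$. By Proposition~\ref{prop:sub_patches} these pieces are genuine cyclidic patches with matching vertex frames, so each half is again a non-singular cyclidic cube, its vertices forming a spherical cube exactly as in Proposition~\ref{prop:miquel_to_cyclidic}. Now apply Theorem~\ref{thm:patch_in_between} to the half-cube containing no part of $f_i^{(i)}$, slicing this time in direction $j$ through the edge point $\wtilde x_j\in\what{x,x_j}$ that labels the patch of $\mathcal{F}^{(j)}$ through $p$. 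The resulting slice meets the four transverse faces of the sub-cube orthogonally, and $\wtilde f^{(i)}$ is one of them; by the uniqueness in Theorem~\ref{thm:patches_for_concircular_points} this slice is precisely the restriction of the global patch $\wtilde f^{(j)}$ to the sub-cube. Since $p$ lies on the face $\wtilde f^{(i)}$ of the sub-cube and on $\wtilde f^{(j)}$, it lies on their common orthogonal boundary arc, whence $\wtilde f^{(i)}\perp\wtilde f^{(j)}$ at $p$. Running through the three pairs gives pairwise orthogonality.

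The main obstacle I anticipate is the reduction step: one must check carefully that the two hexahedra obtained by cutting along $\wtilde f^{(i)}$ really are non-singular cyclidic cubes, i.e. that the frames along the cut close up correctly so that Theorem~\ref{thm:patch_in_between} reapplies, and that the sub-cube slice in direction $j$ coincides with $\wtilde f^{(j)}\cap(\text{sub-cube})$ rather than merely sharing the vertex $\wtilde x_j$. Both points rest on the uniqueness of a cyclidic patch from concircular vertices and one frame (Theorem~\ref{thm:patches_for_concircular_points}) together with the orthogonal-intersection conclusion of Theorem~\ref{thm:patch_in_between}, so no new machinery is needed, but the bookkeeping of frames and shifted normals is where the argument must be made precise.
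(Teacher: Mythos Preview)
Your overall strategy is sound, and in one respect you are more careful than the paper: the sub-cube reduction you outline for orthogonality is exactly the argument that justifies the paper's assertion that ``patches from different families intersect along common curvature lines,'' which the paper invokes in its proof without spelling out.

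However, there is a genuine gap in your disjointness step. You write that ``since the cube is non-singular, opposite faces are disjoint and the intermediate patches cannot meet one another,'' and conclude that each $\mathcal F^{(i)}$ foliates the cube. The first clause is the definition of non-singularity; the second does not follow from it. Knowing that $f^{(i)}$ and $f_i^{(i)}$ are disjoint, and even that the boundary arcs of the $\wtilde f^{(i)}(u_i)$ foliate the four side faces, does not by itself rule out that two intermediate patches $\wtilde f^{(i)}(u)$ and $\wtilde f^{(i)}(u')$ meet at some interior point $x^*$. A continuity or degree argument alone will not close this, because the map from $[0,1]\times(\text{patch})$ into $\R^3$ is not known a priori to be injective in the interior.

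The paper's remedy is short and worth adopting: suppose two patches of $\mathcal F^{(1)}$ meet at $x^*$. Take any patch $g\in\mathcal F^{(2)}\cup\mathcal F^{(3)}$ through $x^*$ (which exists by your existence step). By the orthogonal-intersection property you have already argued via sub-cubes, each of the two $\mathcal F^{(1)}$-patches meets $g$ along a curvature line of $g$; hence two distinct curvature lines of the same family on $g$ pass through $x^*$, forcing $x^*$ to be a singular point of $g$. This contradicts the non-singularity conclusion of Theorem~\ref{thm:patch_in_between}. With this argument in place your proof is complete, and apart from the ordering (you prove orthogonality before using it for disjointness, whereas the paper states disjointness directly), it then matches the paper's approach.
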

\begin{proof}
For any interior point $\wtilde x_{123}$ of a non-singular cyclidic cube there exist
patches $\wtilde f^{(i)} \in \mathcal{F}^{(i)},\, i=1,2,3$, such that
$\wtilde x_{123} := \wtilde f^{(1)} \cap \wtilde f^{(2)} \cap \wtilde f^{(3)}$.
This follows from Corollary~\ref{cor:extension_to_smooth_families} using a continuity argument.
The patches are unique, since patches of a single family $\mathcal F^{(i)}$ are disjoint.
Indeed, suppose two patches from the same family, say $\mathcal F^{(1)}$, intersect in a point $x^*$.
Then $x^*$ is a singular point for each patch $f \in \mathcal F^{(2)} \cup \mathcal F^{(3)}$ with $x^* \in f$,
because patches from different families intersect along common curvature lines.
This contradicts Theorem~\ref{thm:patch_in_between} since we started with a non-singular cyclidic cube.
\end{proof}

Corollary~\ref{cor:extension_to_smooth_families} implies that
the discrete families $M^{(i)}$ of coordinate surfaces of a 3D cyclidic net in $\R^3$
can be extended to continuous families $\mathcal{M}^{(i)}$.
Now taking three 2D cyclidic nets $m^{(i)} \in M^{(i)}$ as in Fig.~\ref{fig:octrihedron}
determines three coordinate axes intersecting in $x_0 = m^{(1)} \cap m^{(2)} \cap m^{(3)}$,
and according to Theorem~\ref{thm:patch_in_between} parametrizations of the coordinate axes
induce parametrizations of the continuous families $\mathcal{M}^{(i)}$
(cf. Fig.~\ref{fig:cut_down_cube}).
By Corollary~\ref{cor:orthogonal_coordinates} one has orthogonal coordinates on an open subset of $\R^3$.

\begin{figure}[htb]
\centering
 \input{ 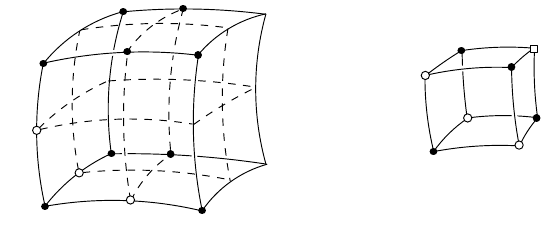_t } 
\caption{Points on boundary arcs of a cyclidic cube as coordinates for points in the interior.}
\label{fig:cut_down_cube}
\end{figure}

\paragraph{Remark on the smoothness.}
The interior point $\wtilde x_{123}$ depends smoothly on the coordinates $\wtilde x_1,\wtilde x_2,\wtilde x_3$.
Indeed the points $\wtilde x_{12},\wtilde x_{23},\wtilde x_{13}$ depend smoothly on $\wtilde x_1,\wtilde x_2,\wtilde x_3$,
since the faces of a cyclidic cube are smooth surface patches.
But then also the point $\wtilde x_{123}$ as eighth vertex of the spherical cube with remaining vertices
$x,\wtilde x_i, \wtilde x_{ij}$ depends smoothly on $\wtilde x_1,\wtilde x_2,\wtilde x_3$.

\subsection{Convergence of cyclidic nets to curvature line pa\-ra\-me\-trized surfaces and orthogonal coordinate systems}
\label{subsec:convergence}

Convergence of circular nets to smooth curvature line parametrized surfaces and orthogonal coordinate systems was proven in
\cite{BobenkoMatthesSuris:2003:OrthogonalSystems}.
Starting with a smooth curvature line parametrization or orthogonal coordinate system $f:D \subset \R^m \to \R^N$,
a one-parameter family $f_\varepsilon$ of circular nets of corresponding dimension was constructed
which converges to $f$ with all derivatves in the limit $\varepsilon \to 0$.
Here the parameter $\varepsilon$ is the grid size parameter for the discretization grid approximating $D$.
Actually more was shown:
Not only convergence of orthogonal nets as point maps was analysed,
but also convergence of the associated discretization of frames at vertices was proven.
In the present paper we have given a geometric interpretation of these frames as vertex frames of cyclidic patches.
Therefore the results of
\cite{BobenkoMatthesSuris:2003:OrthogonalSystems}
imply the following

\begin{theorem}
\label{thm:convergence}
\begin{enumerate}[i)]
\item
Given a smooth curvature line parametrized surface
$f:I_1 \times I_2 \to \R^3$, where $I_1,I_2 \subset \R$ are compact intervals,
there exists a one-parameter family $f_\varepsilon$ of 2D cyclidic nets converging to $f$.
The frames related by \eqref{eq:frame_evolution} converge to the corresponding smooth frame of $f$ as well.
\item
Given a smooth triply orthogonal coordinate system
$f:I_1 \times I_2 \times I_3 \to \R^3$, where $I_1,I_2,I_3 \subset \R$ are compact intervals,
there exists a one-parameter family $f_\varepsilon$ of 3D cyclidic nets converging to $f$.
The frames related by \eqref{eq:frame_evolution} converge to the corresponding smooth frame of $f$ as well.
\end{enumerate}
\end{theorem}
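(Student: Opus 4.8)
The plan is to deduce both statements directly from the convergence theorem of \cite{BobenkoMatthesSuris:2003:OrthogonalSystems}, using the frame description of cyclidic nets developed in this paper as the bridge. The essential observation is that the discretization of \cite{BobenkoMatthesSuris:2003:OrthogonalSystems} produces not only a family of circular nets converging to $f$ with all derivatives, but also a convergent discretization of the principal frame along $f$; and that, by Section~\ref{sec:cyclidic_nets}, a circular net together with such a frame at vertices is exactly the data of a cyclidic net. Both parts $i)$ and $ii)$ are then instances of the same argument, applied with $m=2$ and $m=3$ respectively.

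First I would invoke \cite{BobenkoMatthesSuris:2003:OrthogonalSystems} to obtain, for the given smooth $f$, a one-parameter family $x_\varepsilon$ of $m$-dimensional circular nets converging to $f$, together with the associated convergent discrete frames. For $\varepsilon$ small enough all elementary quadrilaterals are embedded, so by Proposition~\ref{prop:frame_evolution_consistency} each $x_\varepsilon$ extends to a unique cyclidic net $(x_\varepsilon,B_\varepsilon)$ once an initial frame is fixed at one base vertex; I would choose this initial frame to be the value of the smooth principal frame of $f$ at the base point. The crucial step is to identify the frames $B_\varepsilon$ produced by the evolution \eqref{eq:frame_evolution} with the discrete frames whose convergence is already guaranteed: since the reflections $H^{(i)}$ in the perpendicular bisecting hyperplanes \eqref{eq:bisecting_hyperplane} are precisely the maps transporting a frame between neighbouring vertices of a circular net in that construction, the two frame discretizations agree up to the orientation bookkeeping encoded in $F^{(i)}$, and hence $B_\varepsilon$ converges to the smooth principal frame of $f$.

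Having convergence of vertices and frames, it remains to pass from the discrete data to the piecewise smooth $C^1$-surfaces and conclude their convergence to $f$. Here I would use that, by Theorem~\ref{thm:patches_for_concircular_points} together with the explicit formulas of Proposition~\ref{prop:data_for_generic_patches} and Theorem~\ref{thm:parametrization_of_cyclides}, each cyclidic patch is a smooth (indeed rational) function of its four concircular vertices and one vertex frame. As $\varepsilon \to 0$ the vertices and frame of each elementary quadrilateral converge to the corresponding first-order data of $f$, so every patch converges to the osculating piece of $f$ that it approximates, with all derivatives; assembling the patches yields convergence of the cyclidic nets $(x_\varepsilon,B_\varepsilon)$ to $f$. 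In case $ii)$ the same argument applied layer by layer gives convergence of the three discrete families $M^{(i)}$ of coordinate surfaces and, via Corollary~\ref{cor:orthogonal_coordinates}, of the induced discrete orthogonal coordinates.

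The main obstacle I anticipate is not the abstract scheme but the uniformity of this last step: one must control the patch parametrization \eqref{eq:parametrization_of_cyclides} and its derivatives uniformly over all elementary cells, and show that the resulting approximation error matches the order provided by \cite{BobenkoMatthesSuris:2003:OrthogonalSystems}. This requires that all cells remain in the generic (non-degenerate) regime for small $\varepsilon$, so that the denominators in the formulas of Proposition~\ref{prop:data_for_generic_patches} stay uniformly bounded away from zero, and that the spherical and singular degenerations are avoided --- which holds because $f$ is a genuine curvature line parametrization with a nonvanishing principal frame. Verifying that the evolution \eqref{eq:frame_evolution} is literally the frame transport used in \cite{BobenkoMatthesSuris:2003:OrthogonalSystems} is the one place where a careful comparison of conventions, rather than a new estimate, is needed.
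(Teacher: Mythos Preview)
Your core approach matches the paper's exactly: invoke \cite{BobenkoMatthesSuris:2003:OrthogonalSystems} for convergence of both the circular nets and the associated discrete frames, then observe that these frames are precisely the vertex frames of a cyclidic net in the sense of Definition~\ref{def:cyclidic_net}. The paper's entire argument is the paragraph immediately preceding the theorem statement, and it stops there.

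Where you diverge is in your third paragraph and the ``main obstacle'': you go on to argue convergence of the actual $C^1$-surfaces assembled from cyclidic patches, invoking the explicit rational formulas and worrying about uniform control of denominators. The paper does none of this, and the theorem as stated does not require it. A cyclidic net is by definition the discrete data $(x,B)$; ``$f_\varepsilon$ converges to $f$'' means convergence of the point map $x_\varepsilon$, and convergence of the frames is stated separately. So your extra step is a genuine strengthening --- convergence of the piecewise-smooth surfaces rather than just the vertex-and-frame data --- but it is not part of the claim, and the uniformity concerns you raise are therefore not obstacles to the theorem itself.
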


\subsection{Higher dimensional cyclidic nets}
\label{subsec:cyclidic_nets_md}

For an analogous geometric interpretation (related to cyclidic
patches in $\R^3$) of $m$D cyclidic nets in $\R^N, N \ge 4$, one
has to take the following into account: Also in $\R^N$ the frames
at vertices determine circular arcs which should serve as boundary
arcs of surface patches, but generically the four vertices of an
elementary quad and the four circular arcs connecting them are
\emph{not} contained in an affine 3-space. Nevertheless there
exists a unique surface patch for the given boundary, which is the
image of a cyclidic patch in $\R^3$ under a M\"obius
transformation: Due to the evolution \eqref{eq:frame_evolution}
the vertices and circular arcs associated to one patch are always
contained in a 3-sphere (cf. Fig.~\ref{fig:patch_in_4_space}). The
corresponding patch is obtained by identifying this sphere with
$\R^3$ via stereographic projection, so the $m$D case can be
reduced to the 3D case.

\begin{figure}[htb]
\centering
 \input{ 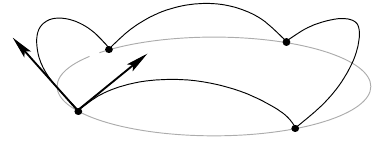_t } 
\caption{In dimensions greater than 4,
the shown curves are generically contained in a 3-sphere, which is determined by the vertices
and the tangent plane at $x$ spanned by $t^{(i)}$ and $t^{(j)}$.}
\label{fig:patch_in_4_space}
\end{figure}

\subsection{Computer implementation}
\label{subsec:implementation}

The 3D pictures in this work were produced with a Java application based on the
projective description of Dupin cyclides in Lie geometry. This application is
available as a Java Webstart at the webpages of the DFG Research Unit
``Polyhedral
Surfaces''\footnote{\texttt{http://www3.math.tu-berlin.de/geometrie/ps/software.shtml}}.
The main tool for visualization is the open source class library
\texttt{jReality} \cite{jReality}, while the projective model of Lie geometry is
implemented as part of \texttt{jTEM} \cite{jTEM}.

For a given circular net (2D or 3D) in $\R^3$ one chooses an initial frame
which is then reflected to all vertices using \eqref{eq:frame_evolution}.
As explained in Section~\ref{subsec:cyclidic_nets_geometry}
one has a unique cyclidic patch for each elementary quadrilateral.
In the following we will explain how to choose parametrizations of the individual cyclidic patches,
such that for the induced global parametrization of the whole cyclidic net
all parameter lines are continuous.

To get a concrete parametrization of a single patch determined by $(X,B)$
one has to chose additional points $y^{(i)},y^{(j)}$ on boundary arcs $\what {x,x_i},\what {x,x_j}$.
Previously we chose $y$'s to be the midpoints (cf. Section~\ref{subsec:curvature_line_param_patches}).
But even if $\sigma^{(j)}$ touches $s^{(i)}$ in the midpoint of $\widehat{x,x_i}$,
in general $\sigma^{(j)}$ does not touch $s_j^{(i)}$ in the midpoint of $\widehat{x_j,x_{ij}}$.
So the idea is to choose points $y^{(i)}$ along two intersecting discrete curvature lines as midpoints of boundary arcs,
and then to keep track of the intersection points of $\frac{1}{2}$-parameter lines with opposite boundary arcs.
These new intersection points should then be used in \eqref{eq:curvature_sphere_from_frame} in order to obtain $\sigma$'s for adjacent patches.
One obtains parametrizations of the patches of a 2D cyclidic net,
such that all $\frac{1}{2}$-parameter lines are continuous
(cf. Fig.~\ref{fig:global_curvline}).

\begin{figure}[htb]
\centering
 \input{ 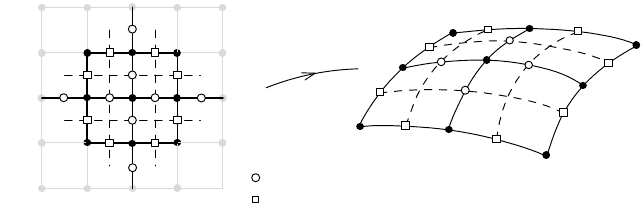_t } 
\caption{Evolution of the points $y^{(i)}$ for initial $y^{(i)}$'s
associated to edges along two intersecting coordinate axes of $\Z^2$.}
\label{fig:global_curvline}
\end{figure}

To prove continuity of all parameter lines,
we verify continuity across the common boundary arc of adjacent patches:
Consider the induced pa\-ra\-me\-tri\-zations over $[0,1]$ of these common arcs.
Since parametrizations are quadratic an coincide at three points, they coincide identically.

In the case of 3D cyclidic nets
closeness conditions enter the game which have to be satisfied in order to obtain continuous parameter lines.
The reason is that $y$'s are associated to edges of $\Z^m$,
and there are different ways to determine the same $y$.
But indeed the evolution of $y$'s is consistent,
which follows from Theorem~\ref{thm:patch_in_between}
(cf. Fig.~\ref{fig:patch_in_between}).

\begin{appendix}

\section{Lie geometry of oriented spheres}
\label{sec:lie_geometry}

For the reader's convenience we include a brief overview of the projective model of Lie geometry.
Probably the most elaborate source is the classical book \cite{Blaschke:1929:DG3} by Blaschke,
a modern comprehensive introduction is contained in Cecil \cite{Cecil:1992:LieSphereGeometry}.
A short introduction can be found in \cite{BobenkoSuris:2008:DDGBook}.

\paragraph{Subject of Lie Geometry.}

Lie geometry is the geometry of oriented hyperspheres in
$\RI^N = \R^N \cup \{ \infty \} \cong \S^N.$
Points are considered as spheres of vanishing radius
while oriented hyperplanes are considered as spheres of infinite radius.
The transformation group of Lie geometry consists of so-called \emph{Lie transformations}.
They are bijections on the set of oriented spheres
and characterized by the preservation of \emph{oriented contact}
(cf. pp.~\pageref{lie_geo:oriented_contact}f).

M\"obius geometry as well as Laguerre geometry are subgeometries of Lie geometry.
M\"obius geometry studies properties which are invariant with respect to M\"obius transformations,
and accordingly Laguerre geometry studies properties which are invariant with respect to Laguerre transformations.
M\"obius transformations are exactly those Lie transformations which preserve the set of point spheres,
and Laguerre transformations are exactly those Lie transformations which preserve the set of hyperplanes.

\paragraph{The Lie quadric.}
\label{lie_geo:lie_quadric}

The space of homogeneous coordinates is $\R^{N+1,2}$,
i.e. $\R^{N+3}$ equipped with an inner product
$\langle \cdot , \cdot \rangle$ of signature $(N+1,2)$.
On the standard basis $(\e_1,\dots,\e_{N+3})$
this product acts as
\begin{equation*}
\langle \e_i , \e_j \rangle =
\left\{
\begin{array}[]{rl}
1, & i = j = 1,\dots,N+1,\\
-1, & i = j =  N+2,N+3,\\
0, & i \ne j.
\end{array}
\right.
\end{equation*}
Homogeneous coordinates are marked with a hat,
except for basis vectors which are written in bold face.
Points in a projective space we write as $x = [\what x]$.

\begin{definition}[Lie quadric]
We denote the set of isotropic vectors in $\R^{N+1,2}$ by
\[ \LL^{N+1,2} := \left\{ \what v \in \R^{N+1,2} \mid \langle \what v, \what v \rangle = 0 \right\}.\]
The \emph{Lie quadric} is the projectivation of $\LL^{N+1,2}$,
\[ \mathcal{Q}^{N+1,2} := \P(\LL^{N+1,2}) \subset \RP^{N+1,2} = \P(\R^{N+1,2}). \]
\end{definition}

For a convenient description of the Lie quadric we introduce the vectors
\begin{equation*}
\e_0 := \frac12 \left( \e_{N+2} - \e_{N+1} \right),\quad
\e_\infty := \frac12 \left( \e_{N+2} + \e_{N+1} \right),\quad
\e_r := \e_{N+3}
\end{equation*}
and replace the basis $(\e_1,\dots,\e_{N+3})$
by $(\e_1,\dots,\e_N,\e_0,\e_\infty,\e_{r})$.
Note that the vectors $\e_0,\e_\infty$ are isotropic, i.e. $\e_0,\e_\infty \in \LL^{N+1,2}$,
and $\langle \e_0 , \e_\infty \rangle = -\frac{1}{2}$.

Generalized oriented Hyperspheres in $\RI^N$ are in bijection with points on the Lie quadric:

\begin{enumerate}[$\quad \bullet$]
\item
    Normalized homogeneous coordinates of a proper oriented hypersphere $s$
    with center $c \in \R^N$ and oriented radius $r \in \R\setminus \left\{ 0 \right\}$ are
\begin{equation}
\what s = c + \e_0 + \left( \|c\|^2 - r^2 \right)\e_\infty + r \cdot \e_{r}.
\label{eq:representative_sphere}
\end{equation}
    We follow the convention that positive radii $r>0$ are assigned to spheres with the inward field of unit normals.

\item Points $x \in \R^N$ are considered as hyperspheres of vanishing radius,
\begin{equation}
\what x = x + \e_0 + \|x\|^2 \e_\infty + 0 \cdot \e_{r}.
\label{eq:representative_point}
\end{equation}
$[\what v] \in \mathcal Q^{N+1,2}$ is a point sphere if and only if it is contained the projective hyperplane $\P(\e_r^\perp)$,
i.e. $\langle \what v,\e_r \rangle = 0$.

\item The point $\infty$ is also a point sphere and has homogeneous coordinates
    \begin{equation}
     \what \infty = \e_\infty.
    \end{equation}
$[\e_\infty]$ is the only isotropic point with vanishing $\e_0$- and $\e_r$-components.

\item
Normalized homogeneous coordinates of an oriented hyperplane $p$
with normal $n \in \S^{N-1}$ and offset $d \in \R$ are
\begin{equation}
\what p = n + 0 \cdot \e_0 + 2d \e_\infty + \e_{r}.
\label{eq:representative_plane}
\end{equation}
$[\what v] \in \mathcal Q^{N+1,2}$ is a hyperplane if and only if it is contained the projective hyperplane $\P(\e_\infty^\perp)$,
i.e. $\langle \what v,\e_\infty \rangle = 0$.
\end{enumerate}

\paragraph{Oriented contact and contact elements.}
\label{lie_geo:oriented_contact}

From the Euclidean viewpoint there are the following possible configurations of spheres in oriented contact:

\begin{enumerate}[$\quad \bullet$]
\item
Two proper hyperspheres are in oriented contact if they are tangent
with coinciding normals in the point of contact (cf. Fig.~\ref{fig:oriented_contact_spheres}).
\begin{figure}[htb]
\centering
 \input{ 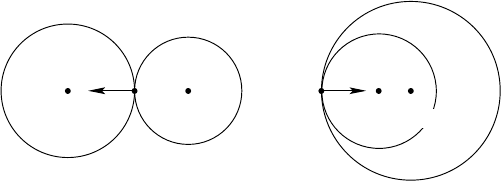_t } 
\caption{Oriented contact of two proper hyperspheres.}
\label{fig:oriented_contact_spheres}
\end{figure}
\item
A proper hypersphere and a hyperplane are in oriented contact if they are tangent with coinciding normals in the touching point.
\item
Two hyperplanes are in oriented contact if they are parallel with coinciding normal,
their point of contact is $\infty$.
\item
Oriented contact with a point means incidence.
A point cannot be in oriented contact with any other point.
\end{enumerate}
Easy to check (see e.g. \cite{BobenkoSuris:2008:DDGBook}) is the crucial fact
\begin{quote}
\emph{
Two generalized oriented hyperspheres $s_1$ and $s_2$ are in oriented contact
if and only if $\langle \what s_1 , \what s_2 \rangle = 0$,
i.e. if $s_1,s_2 \in \mathcal Q^{N+1,2}$ are polar with respect to the Lie quadric.
}
\end{quote}

Given two spheres in oriented contact, there is a unique point of
contact. At most one of the spheres is a point sphere, thus we
have a unique normal $n$ at the contact point $x$. We identify
this pair $(x,n) \in \RI^N \times \S^{N-1}$ with the 1-parameter
family of spheres through $x$ with normal $n$ at $x$, all being in
oriented contact. Such a sphere pencil is called a \emph{contact
element} (cf. Fig.~\ref{fig:contact_element}). For $x=\infty$ a
contact element consists of parallel hyperplanes with coinciding
orientation.
\begin{figure}[htb]
\centering
 \input{ 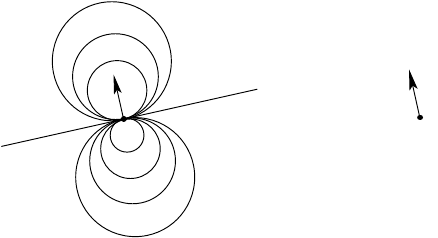_t } 
\caption{A contact element is composed of all hyperspheres touching in a point $x$ with coinciding normal $n$.}
\label{fig:contact_element}
\end{figure}

The projective translation gives

\begin{definition}[Contact element]
A \emph{contact element} of $\; \RI^N$ is an isotropic line in $\RP^{N+1,2}$,
i.e. a line contained in the Lie quadric $\mathcal{Q}^{N+1,2}$.
We denote the set of all these lines by
\begin{equation*}
\L^{N+1,2}_0 := \left\{ \text{Isotropic lines in } \RP^{N+1,2} \right\}.
\end{equation*}
\label{def:contact_elements}
\end{definition}

A quadric of signature $(p,q)$ in a projective space contains only projective subspaces $U$
of dimension $\dim U < \min(p,q)$.
Thus the Lie quadric contains isotropic lines, i.e. contact elements, but no isotropic planes.
Equivalently there are no three coplanar isotropic lines in $\mathcal Q^{N+1,2}$.
This means that if three oriented spheres are pairwise in oriented contact,
they have to belong to one contact element.

Another important implication is the following:
\label{unique_polar_sphere}
For a line $L \subset \mathcal{Q}^{N+1,2}$ and a point $s \in \mathcal{Q}^{N+1,2}, s \notin L$,
there is a unique point $\sigma \in L$ such that $s$ and $\sigma$ are polar,
i.e. there is a unique isotropic line $L_s \ni s$ which intersects $L$ (in $\sigma$).
In other words, for each contact element $L$ and a sphere $s$ not contained in $L$,
there is a unique sphere $\sigma \in L$ such that $s$ and $\sigma$ are in oriented contact.

Spheres $(s_1,s_2)$ in oriented contact span a contact element $\inc[s_1,s_2]$ and each contact element contains exactly one point sphere.
Homogeneous coordinates of point spheres are characterized by vanishing $\e_r$-component,
thus the contact point of a contact element $L$ is the unique intersection $L \cap \P(\e_r^\perp)$.
Solving $(\alpha_1 r_1 + \alpha_2 r_2) \e_r = 0$ and normalizing the $\e_0$-component yields

\begin{lemma}
\label{lem:hom_coords_of_contact_points}
Normalized homogeneous coordinates \eqref{eq:representative_point} of the unique point sphere
contained in a contact element $\inc[s_1,s_2]$ are given by
\begin{equation}
\label{eq:hom_coords_of_contact_points}
\what x = \frac 1 {r_2 - r_1} \left( r_2 \what s_1 - r_1 \what s_2 \right).
\end{equation}
In case that one of the spheres is a hyperplane,
i.e. the contact element is given as $\inc[s,p]$,
this formula reduces to
\begin{equation*}
\what x = \what s - r \what p.
\end{equation*}
\end{lemma}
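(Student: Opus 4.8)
The plan is to use the two facts recorded just above the statement: a contact element $L = \inc[s_1,s_2]$ is an isotropic line meeting the point-sphere hyperplane $\P(\e_r^\perp)$ in exactly one point, and this intersection is the contact point. Hence the sought point sphere $\what x$ is automatically a linear combination $\what x = \alpha_1 \what s_1 + \alpha_2 \what s_2$, and the entire problem reduces to fixing the coefficients $\alpha_1,\alpha_2$ by the two defining conditions of normalized point-sphere coordinates \eqref{eq:representative_point}: the $\e_r$-component must vanish and the $\e_0$-component must equal $1$.

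First I would read off from \eqref{eq:representative_sphere} that for a proper sphere $\what s_i$ the $\e_r$-component is precisely the oriented radius $r_i$, while the $\e_0$-component is $1$. Imposing $\langle \what x,\e_r\rangle = 0$ therefore amounts to the single scalar equation $\alpha_1 r_1 + \alpha_2 r_2 = 0$, which pins down the projective ratio $\alpha_1 : \alpha_2 = r_2 : (-r_1)$. Taking the representative $\what x = r_2 \what s_1 - r_1 \what s_2$ and reading off its $\e_0$-component as $r_2 - r_1$, I would divide by this scalar to enforce the normalization, which yields exactly \eqref{eq:hom_coords_of_contact_points}.

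For the hyperplane case $\inc[s,p]$ I would repeat the computation with \eqref{eq:representative_plane} in place of \eqref{eq:representative_sphere}: a hyperplane has $\e_r$-component $1$ (the analogue of an infinite radius) and $\e_0$-component $0$. The vanishing-$\e_r$ condition then reads $\alpha_1 r + \alpha_2 = 0$, so that $\what x = \what s - r\,\what p$; since $\what p$ contributes nothing to the $\e_0$-direction, the $\e_0$-component is already $1$ and no rescaling is necessary.

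The only genuinely delicate point --- and the closest thing to an obstacle --- is the implicit requirement $r_1 \ne r_2$ that keeps the denominator in \eqref{eq:hom_coords_of_contact_points} nonzero. Equal $\e_r$-components force the combination $r_2\what s_1 - r_1\what s_2$ to have vanishing $\e_0$-component, so it represents $[\e_\infty] = \what\infty$; geometrically this is the situation in which the contact point lies at infinity, i.e.\ the contact element consists of parallel hyperplanes (cf.\ the description of oriented contact on p.~\pageref{lie_geo:oriented_contact}). I would note that a short evaluation of $\langle \what s_1,\what s_2\rangle$ in the coordinates \eqref{eq:representative_sphere} shows that oriented contact forces $\|c_1-c_2\| = |r_1-r_2|$, so for proper spheres $r_1 = r_2$ can hold only when the spheres coincide. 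Thus the finite formula applies whenever the contact point is finite, and the remaining case is covered separately by the limiting value $\what\infty$.
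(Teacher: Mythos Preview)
Your proof is correct and follows exactly the approach the paper takes: the paper's argument, given in the sentence immediately preceding the lemma, is precisely ``Solving $(\alpha_1 r_1 + \alpha_2 r_2)\,\e_r = 0$ and normalizing the $\e_0$-component yields\ldots''. Your additional discussion of the degenerate case $r_1 = r_2$ goes beyond what the paper records but is a welcome clarification.
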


\paragraph{Lie sphere transformations.}

Lie sphere transformations of $\RI^N$ map spheres to spheres and are
characterized by the fact that they preserve contact elements.
In the projective model they are described as projective transformations of $\RP^{N+1,2}$ which preserve the Lie quadric and thus map isotropic lines to isotropic lines.

\paragraph{Description of circles.}

The description of points on a circle in the projective model of Lie geometry is essential for us.

\begin{lemma}[]
Four points in $\RI^N$ lie on a circle,
if and only if the corresponding points in
$\mathcal Q^{N+1,2} \cap \textup{P}(\e_r^\perp) \subset \RP^{N+1,2}$ are coplanar.
\label{lem:circle_planar_representatives}
\end{lemma}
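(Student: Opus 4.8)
The plan is to work entirely inside the M\"obius subgeometry carved out by $\P(\e_r^\perp)$: since $\e_r$ is a negative vector, the induced form on $\e_r^\perp$ has signature $(N+1,1)$, and $\mathcal{Q}^{N+1,2}\cap\P(\e_r^\perp)$ is exactly the M\"obius quadric whose points are the point spheres, i.e. the elements of $\RI^N$. The first step is to pass to explicit coordinates in the basis $(\e_1,\dots,\e_N,\e_0,\e_\infty)$ of $\e_r^\perp$: by \eqref{eq:representative_point} a finite point $x$ has coordinate vector $(x,1,\|x\|^2)$, while $\infty$ has vector $\e_\infty=(0,0,1)$. Because there are exactly four points, they are \emph{coplanar} (contained in a common projective plane) if and only if their four coordinate vectors in $\R^{N+2}$ are linearly dependent. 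For four finite points, writing out a dependence $\sum_a\lambda_a\what x^{(a)}=0$ componentwise splits it into the three conditions $\sum_a\lambda_a=0$ (the $\e_0$-part), $\sum_a\lambda_a x^{(a)}=0$ (the $\R^N$-part), and $\sum_a\lambda_a\|x^{(a)}\|^2=0$ (the $\e_\infty$-part).

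The second step is to read these three conditions as an \emph{affine} dependence of the paraboloid lifts $X^{(a)}=(x^{(a)},\|x^{(a)}\|^2)\in\R^{N+1}$. The first two conditions say precisely that the $X^{(a)}$ lie in a common affine $2$-flat. I would then use the standard fact that an affine $2$-flat meets the paraboloid $\{(x,\|x\|^2)\}$ exactly in the lift of a generalized circle: substituting $x_{N+1}=\|x\|^2$ into the affine-linear equations cutting out the flat turns each equation into a sphere equation (when the $x_{N+1}$-coefficient is nonzero) or a hyperplane equation (when it vanishes), and the common zero locus of such $N-1$ equations is a circle or a straight line; conversely every generalized circle arises this way. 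Hence coplanarity of the representatives is equivalent to the four points lying on a common generalized circle. The point $\infty$ is handled by the same bookkeeping applied to its $\e_\infty$-representative: its occurrence in a dependence forces the flats to be ``vertical'', i.e. forces the generalized circle to be a straight line, in accordance with the convention that lines are circles through $\infty$.

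The step I expect to be the main obstacle is the careful treatment of the degenerate configurations, which in the projective picture is exactly a signature question. I must ensure that when the four representatives span a $3$-dimensional subspace $\what P\subset\R^{N+1,1}$, this subspace has Lorentzian signature $(2,1)$, so that $P\cap\mathcal{Q}^{N+1,1}$ is a genuine $1$-sphere rather than an empty or doubly degenerate conic. Here I would exploit that $\R^{N+1,1}$ has only a single negative direction, so $\what P$ carries at most one negative eigenvalue; since $\what P$ already contains several \emph{distinct} isotropic lines (the distinct point representatives), the positive-definite case (no real points) and the positive-semidefinite degenerate case (only one isotropic line, hence a single point) are both excluded, leaving signature $(2,1)$.

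Finally I would dispose of the remaining low-rank cases for completeness: if the four representatives span only a projective line, that line meets $\mathcal{Q}^{N+1,1}$ in at most two points, so there are at most two distinct points among the four and they trivially lie on a circle; coincident-point configurations are covered the same way. This makes the equivalence hold in full generality, with the forward implication ($\Rightarrow$) being the easy direction --- concircular points have all representatives in the $2$-plane associated to their common circle, hence are coplanar --- and the backward implication ($\Leftarrow$) supplied by the lifting argument together with the signature analysis.
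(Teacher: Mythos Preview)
Your argument is correct and self-contained, whereas the paper does not actually prove this lemma: it simply observes that restricting $\mathcal Q^{N+1,2}$ to the hyperplane $\P(\e_r^\perp)$ yields the M\"obius quadric and then cites the corresponding classical M\"obius-geometric statement from the literature. In effect, the paper's proof is your opening sentence and nothing more. What your approach buys is an elementary, reference-free verification via the paraboloid lift $x\mapsto(x,\|x\|^2)$, together with a signature check confirming that the resulting conic section is genuinely a circle rather than a degenerate object. One small wording slip to fix: where you write that ``the first two conditions'' force the lifts $X^{(a)}$ into a common affine $2$-flat, you in fact need all three conditions (the $\e_0$-, $\R^N$-, and $\e_\infty$-components together), since affine dependence of the $X^{(a)}=(x^{(a)},\|x^{(a)}\|^2)$ uses both coordinates of $X^{(a)}$. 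This does not affect the substance of your argument.
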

The corresponding M\"obius geometric claim is a classical result,
see for example \cite[Thm. 3.9]{BobenkoSuris:2008:DDGBook}.
Lemma~\ref{lem:circle_planar_representatives} follows immediately since M\"obius geometry
corresponds to the restriction of Lie quadric 
$\mathcal Q^{N+1,2}$ to the hyperplane $\textup{P}(\e_r^\perp)$ of point spheres.

We call a continuous family of contact elements sharing one sphere an
\emph{isotropic cone}.  Such a cone describes a continuous curve on the common
sphere (or plane) corresponding to its tip.
Lemma~\ref{lem:circle_planar_representatives} implies that this curve is a
circle if and only if the isotropic cone is contained in a 3-dimensional
subspace of $\RP^{N+1,2}$.  Actually circles on a sphere $s$ are in bijection
with planes $\Pi \subset \P(\e_r^\perp) \cap \pol[s]$.  These planes in turn are
in bijection with 3-spaces $V \subset \pol[s]$ since $s \notin \Pi$, i.e. $V =
\inc[s,\Pi]$.  As each contact element $(x,n)$ containing $s$ may be written as
$\inc[s,x]$, one has $\left\{ \inc[s,x] \mid x \in \Pi \right\} = V \cap
\mathcal{Q}^{N+1,2} = C$.  In particular, $C$ is an isotropic cone.  This proves

\begin{proposition}
Circles on a sphere $s$ are in bijection with 3-dimensional subspaces $V \subset \emph{pol}[s]$ containing $s$.
The contact elements along such a circle are the generators of the corresponding isotropic cone
$C = V \cap \mathcal{Q}^{N+1,2}$.
\label{prop:contact_element_cone_circle}
\end{proposition}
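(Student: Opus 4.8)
The plan is to translate the phrase ``circle on $s$'' into a purely projective incidence condition and then read off both assertions from it. First I would note that a point $x \in \RI^N$ lies on the sphere $s$ exactly when the point sphere $\what x$ is in oriented contact with $\what s$, i.e. when $\langle \what x, \what s \rangle = 0$; hence the points of $s$ form the set $\mathcal{Q}^{N+1,2} \cap \P(\e_r^\perp) \cap \pol[s]$, where $\P(\e_r^\perp)$ selects point spheres and $\pol[s]$ selects incidence with $s$. Throughout I would assume $s$ is not a point sphere, so that $\what s \notin \e_r^\perp$ and therefore $s \notin \P(\e_r^\perp)$.

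Next I would characterize the circles. By Lemma~\ref{lem:circle_planar_representatives} four points are concircular if and only if their point-sphere representatives are coplanar, so a circle lying on $s$ corresponds to the non-degenerate conic $\Pi \cap \mathcal{Q}^{N+1,2}$ cut out by a plane $\Pi \subset \P(\e_r^\perp) \cap \pol[s]$. To pass to the $3$-spaces of the statement I would set $V = \inc[s,\Pi]$. Since $\Pi \subset \P(\e_r^\perp)$ and $s \notin \P(\e_r^\perp)$ we have $s \notin \Pi$, so $V$ is genuinely $3$-dimensional; moreover $\langle \what s, \what s \rangle = 0$ gives $s \in \pol[s]$, which together with $\Pi \subset \pol[s]$ yields $V \subset \pol[s]$ and $s \in V$. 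The inverse assignment is $V \mapsto V \cap \P(\e_r^\perp)$: because $\what s \notin \e_r^\perp$, the functional $\langle\,\cdot\,,\e_r\rangle$ is nonzero on the $4$-dimensional $\what V$, so its kernel cuts $\what V$ in a $3$-dimensional subspace and $V \cap \P(\e_r^\perp)$ is a plane, which a dimension count identifies with the original $\Pi$.

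For the second assertion I would describe the contact elements along the circle. Each contact element containing $s$ has the form $\inc[s,x]$, where $x$ is its unique point sphere and hence lies on $s$; conversely each point $x$ of the circle yields such an element. As $x \in \Pi \subset \pol[s]$ the line $\inc[s,x]$ is isotropic and lies in $V$, hence in $C = V \cap \mathcal{Q}^{N+1,2}$. Finally, because $V \subset \pol[s]$ with $s \in V$, the vector $\what s$ lies in the radical of the ambient inner product restricted to $\what V$; thus $C$ is a cone with vertex $s$ whose generators are precisely the isotropic lines through $s$ in $V$, i.e. the contact elements $\inc[s,x]$ with $x \in \Pi \cap \mathcal{Q}^{N+1,2}$. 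This identifies $\{\,\inc[s,x] \mid x \in \Pi \cap \mathcal{Q}^{N+1,2}\,\}$ with $C$ and finishes the proof.

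The step I expect to be the main obstacle is the bookkeeping that makes the correspondence a genuine bijection rather than only a surjection: one must check that $V \cap \P(\e_r^\perp)$ recovers $\Pi$ exactly and that the resulting conic remains non-degenerate, which is precisely where the hypothesis $\what s \notin \e_r^\perp$ (i.e. $s$ proper) and the signature of the ambient form are used. Once $\what s$ is seen to lie in the radical of the restricted form, the cone structure and the identification of its generators with the contact elements follow immediately.
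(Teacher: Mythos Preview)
Your proof is correct and follows essentially the same route as the paper: the paper's argument (given in the paragraph immediately preceding the proposition) also passes from circles on $s$ to planes $\Pi \subset \P(\e_r^\perp) \cap \pol[s]$ via Lemma~\ref{lem:circle_planar_representatives}, then to $V = \inc[s,\Pi]$ using $s \notin \Pi$, and identifies $C$ with $\{\inc[s,x] \mid x \in \Pi\}$. Your write-up is more careful than the paper's sketch---you make explicit the inverse $V \mapsto V \cap \P(\e_r^\perp)$, the role of $\what s \notin \e_r^\perp$, and the radical argument for the cone structure---but the underlying strategy is the same.
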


\end{appendix}

\section*{Acknowledgement}

We want to thank Boris Springborn for numerous helpful discussions. We also
appreciate the help of Charles Gunn with implementing the Java application for
visualizing and exploring cyclidic nets, in particular by providing us with an
implementation of the projective model of Lie geometry.  Finally, we want to
thank Ulrich Bauer for his helpful comments.

\bibliographystyle{amsalpha}
\bibliography{huhnen-venedey}

\end{document}